\tikzset{
  main/.style={circle, minimum size = 30pt, thick, draw = black!80, node distance = 10mm},
  connect/.style={-latex, thick},
  box/.style={rectangle, draw = white!100}
}
\numberwithin{equation}{section}
\let\OLDthebibliography\thebibliography
\renewcommand\thebibliography[1]{
  \OLDthebibliography{#1}
  \setlength{\parskip}{0pt}
  \setlength{\itemsep}{2pt plus 0.5ex}
}
\def\@cite#1#2{{\m@th\upshape\bfseries%
[{#1\if@tempswa{\m@th\upshape\mdseries, #2}\fi}]}}
\theoremstyle{plain}
\newtheorem{theorem}{Theorem}[section]
\newtheorem{corollary}[theorem]{Corollary}
\newtheorem{proposition}[theorem]{Proposition}
\theoremstyle{definition}
\newtheorem{remark}[theorem]{Remark}
\newtheorem*{acknow}{Acknowledgements}
\newtheorem*{open}{Open Access Statement}
\theoremstyle{remark}
  \newcommand{\B}{{\mathcal{B}}}
  \newcommand{\C}{{\mathcal{C}}}
  \newcommand{\F}{{\mathcal{F}}}
  \newcommand{\I}{{\mathcal{I}}}
  \newcommand{\J}{{\mathcal{J}}}
  \newcommand{\K}{{\mathcal{K}}}
\renewcommand{\L}{{\mathcal{L}}}
  \newcommand{\N}{{\mathcal{N}}}
\renewcommand{\O}{{\mathcal{O}}}
\renewcommand{\P}{{\mathcal{P}}}
\renewcommand{\S}{{\mathcal{S}}}
  \newcommand{\T}{{\mathcal{T}}}
\newcommand{\eps}{\varepsilon}
\def\al{\alpha}
\def\be{\beta}
\def\De{\Delta}
\def\de{\delta}
\def\io{\iota}
\def\la{\lambda}
\def\si{\sigma}
\newcommand\vphi{\varphi}
\newcommand{\bC}{\mathbb{C}}
\newcommand{\bN}{\mathbb{N}}
\newcommand{\bZ}{\mathbb{Z}}
\newcommand{\bR}{\mathbb{R}}
\newcommand{\fA}{{\mathfrak{A}}}
\newcommand{\fH}{{\mathfrak{H}}}
\newcommand{\fh}{{\mathfrak{h}}}
\newcommand{\Bx}{{\mathbf{x}}}
\newcommand{\By}{{\mathbf{y}}}
\newcommand{\foral}{\text{ for all }}
\newcommand{\qand}{\quad\text{and}\quad}
\newcommand{\qiff}{\quad\text{if and only if}\quad}
\newcommand{\qfor}{\quad\text{for}\quad}
\newcommand{\ca}{\mathrm{C}^*}
\newcommand{\cenv}{\mathrm{C}^*_{\textup{env}}}
\newcommand{\ol}{\overline}
\newcommand{\ad}{\operatorname{ad}}
\newcommand{\alg}{\operatorname{alg}}
\newcommand{\cov}{\operatorname{c}}
\newcommand{\fock}{\operatorname{F}}
\newcommand{\id}{{\operatorname{id}}}
\newcommand{\mt}{\emptyset}
\newcommand{\scv}{\operatorname{sc}}
\newcommand{\spn}{\operatorname{span}}
\newcommand{\sumoplus}{\operatornamewithlimits{\sum \strut^\oplus}}
\newcommand{\sca}[1]{\left\langle#1\right\rangle} 
\newcommand{\nor}[1]{\left\Vert #1\right\Vert} 
\newcommand{\bo}[1]{\mathbf{#1}} 
\newcommand{\tes}[7]{
	\xymatrix@C=2cm@R=1.5cm{
		K_0\left(#1\right) \ar[r]^{#2} & K_0\left(#3\right) \ar[r]^{#4} & K_0\left(#5\right) \ar[d] \\
		K_1\left(#5\right) \ar[u] & K_1\left(#3\right) \ar[l]^{#7} & K_1\left(#1\right) \ar[l]^{#6}
	}
}
\begin{document}

\title[Fock covariance and the reduced Hao--Ng isomorphism problem]{On Fock covariance for product systems and the reduced Hao--Ng isomorphism problem by discrete actions}

\author[E.T.A. Kakariadis]{Evgenios T.A. Kakariadis}
\address{School of Mathematics, Statistics and Physics\\ Newcastle University\\ Newcastle upon Tyne\\ NE1 7RU\\ UK}
\email{evgenios.kakariadis@newcastle.ac.uk}

\author[I.A. Paraskevas]{Ioannis Apollon Paraskevas}
\address{Department of Mathematics\\ National and Kapodistrian University of Athens\\ Athens\\ 1578 84\\ Greece}
\email{ioparask@math.uoa.gr}

\thanks{2020 {\it  Mathematics Subject Classification.} 46L08, 47L55, 46L05}

\thanks{{\it Key words and phrases:} Product systems, Fock space, crossed products of operator algebras.}

\begin{abstract}
We provide a characterisation of equivariant Fock covariant injective representations for product systems.
We show that this characterisation coincides with Nica covariance for compactly aligned product systems over right LCM semigroups of Kwa\'{s}niewski and Larsen, and with the Toeplitz representations of a discrete monoid of Laca and Sehnem.
By combining with the framework established by Katsoulis and Ramsey, we resolve the reduced Hao--Ng isomorphism problem for generalised gauge actions by discrete groups.
\end{abstract}

\maketitle

\section{Introduction}

\subsection{Fock covariance}

Hilbertian representations play an important role in the study of algebraic structures and symmetries.
One of the cornerstone examples is the Gel'fand--Raikov Theorem that ``identifies'' a locally compact group with its unitary representations, while further examples arise naturally in the context of invertible transformations on a (possibly noncommutative) state space. 
In this endeavour, algebraic structures associated with discrete groups have been successfully unified under the theory of Fell bundles.
Product systems over a unital subsemigroup $P$ of a discrete group $G$ provide a common context to describe irreversible transformations as a semigroup analogue of Fell bundles.
Examples include subshift spaces, rank one or higher rank graphs, topological graphs, and semigroup actions on C*-algebras (in particular in relation to transfer operators), to mention but a few beyond the Fell bundles setup.

Product systems were introduced by Arveson \cite{Arv89} disguised under duality, and later put in context by Dinh \cite{Din91} for discrete subsemigroups of $\bR_+$.
Nica \cite{Nic92} considered operator algebras related to semigroups that admit a type of order structure, i.e., quasi-lattice semigroups, for which a representation by left translations was well-defined, thus making the resulting C*-algebras amenable to study.
It had been known that, unlike to the group case, taking plain isometric representations leads to intractable objects, e.g., the universal isometric C*-algebra of $\bN \times \bN$ is not even nuclear \cite{Mur87}.
A richer structure is at hand when one considers the relations in the Fock representation of a quasi-lattice order, now commonly known as Nica covariance.
Nica's setup is now covered by the theory as a product system with \lq\lq trivial'' fibers.
On the other hand, Pimsner \cite{Pim95} introduced C*-algebras of product systems over $\bZ_+$, i.e., C*-correspondences, that encodes successfully graph C*-algebras, crossed products over $\bZ$, or partial transformations, see for example \cite{Kat03}.
Motivated by \cite{Nic92, Pim95} as well as his work with Raeburn \cite{FR98}, Fowler \cite{Fow02} proceeded to an in-depth study of product systems over quasi-lattices, that inspired a great number of subsequent works.
In order to make full use of this additional feature, Fowler imposed several axioms on the product system resulting to a by-default  Wick ordering.
Moving beyond the quasi-lattice order structure, Brownlowe, Larsen and Stammeier \cite{BLS18} considered product systems related to dynamical systems over right LCM semigroups. 
Kwa\'{s}niewski and Larsen \cite{KL19a, KL19b} extended this setup to general product systems over right LCM semigroups, and essentially this is the furthest one may go and still have a Wick ordering.
Cuntz, Deninger and Laca \cite{CDL13} used the Fock model beyond Nica's work \cite{Nic92} for $P = R \rtimes R^\times$, where Nica covariance may fail, but one can still use the Fock representation as the prototype for covariant relations.

The coaction of the ambient group $G$ has been pivotal in the theory of product systems, in particular in relation to boundary quotients by Carlsen, Larsen, Sims and Vittadello \cite{CLSV11} and Sehnem \cite{Seh18, Seh21}, as well as in semigroup algebras by Laca and Sehnem \cite{LS21} and the first author, Katsoulis, Laca and Li \cite{KKLL21a}.
Dor-On, the first author, Katsoulis, Laca and Li \cite{DKKLL20} noted that there is a canonical Fell bundle arising from the normal coaction on $\T_\la(X)$, denoted here by $\F \C X$.
It appears that, while $\T_\la(X)$ is the reduced C*-algebra of $\F\C X$, it is not always the reduced C*-algebra for the universal C*-algebra $\T(X)$ of the representations of $X$ (as it happens for $P=\bZ_+$).
To make a distinction, we will write $\T_{\cov}^{\fock}(X)$ for the universal C*-algebra of $\F\C X$; a representation of $X$ that promotes to a $*$-representation of $\T_{\cov}^{\fock}(X)$ will be called \emph{Fock covariant}.
It was shown in \cite{DKKLL20} that Fock covariant representations coincide with Nica covariant representations for a compactly aligned product system $X$ over a right LCM semigroup $P$, and thus $\T_{\cov}^{\fock}(X)$ coincides with the universal Nica covariant C*-algebra $\N\T(X)$ in this case.

The difference between $\T(X)$ and $\T_{\cov}^{\fock}(X)$ is apparent even when every $X$ arises from $P$ by one-dimensional spaces, where the unital representations of $X$ are in bijection with the unital semigroup representations of $P$.
In his ground-breaking work, Li \cite{Li12} took motivation from \cite{CDL13} and revived the interest on semigroup representations with the twist that, apart from the semigroup structure, they remember the principal ideals and their intersections, i.e., the \emph{constructible} ideals.
Li's models are variants of the relations in the Fock representation and were further examined by the first author, Katsoulis, Laca and Li \cite{KKLL21a}, also in connection with inverse semigroup realizations by Norling \cite{Nor14}.
At the same time Laca and Sehnem \cite{LS21} identified completely the relations that promote a general semigroup representation to a Fock covariant representation of its related product system.
It is thus natural to ask for the characterisation of Fock covariance for general product systems.

Motivated by the Hao--Ng isomorphism problem, our first aim in this work is to provide a description of the equivariant Fock covariant injective representations of $X$.
For our purposes, we will view a product system $X$ as a family of C*-correspondences $\{X_p\}_{p \in P}$ in a common $\B(H)$ that satisfies some natural semigroup relations, i.e., that $A := X_{e}$ is a C*-algebra, and
\begin{equation} \label{eq:defn ps}
X_p \cdot X_q \subseteq X_{pq}
\text{ and }
X_p^* \cdot X_{pq} \subseteq X_q
\foral 
p, q \in P.
\end{equation}
This setup enables to still define left creation operators on the full Fock space $\F X$ giving rise to the Fock C*-algebra $\T_\la(X)$, while it recovers Fowler's product systems in \cite{Fow02} which arise under the stronger saturation condition $[X_p\cdot X_q]=X_{pq}$ for all $p,q \in P$.
We note that most of the works for product systems assume non-degeneracy of the left action on $X$.
Nevertheless, non-degeneracy is rarely used, although imposed by the saturation condition of Fowler \cite{Fow02}.
Our setting from (\ref{eq:defn ps}) contains all non-degenerate product systems, while accommodating possible degenerate ones.
Such cases arise naturally for example in C*-dynamical systems as the ones considered by  Davidson, Fuller and the first named author in \cite{DFK17}.

The characterisation of Fock covariance that we provide is given in terms of the $\bo{K}_{\Bx}$-cores for $\Bx \in \J$.
We then use the natural description of the fixed point algebra by an inductive limit of C*-algebras on the $\cap$-closed finite subsets of the constructible ideals of $P$ in $\J$, which are spanned by the $\bo{K}_{\bullet}$-cores.

\medskip

\noindent
{\bf Theorem A.} \emph{(Theorem \ref{T:Fock cov})
Let $P$ be a unital subsemigroup of a discrete group $G$ and let $X$ be a product system over $P$.
Let $\hat{t}$ be a representation of $X$ such that $\T(X) = \ca(\hat{t})$.
An equivariant injective representation $t$ of $X$ is Fock covariant if and only if $t$ satisfies the following conditions:
\begin{enumerate}
\item $\bo{K}_{\mt, t_\ast} = (0)$.
\item For any $\cap$-closed $\F=\{\bo{x}_1,\dots,\bo{x}_n\} \subseteq \J$ such that $\bigcup_{i=1}^n \bo{x}_i \neq \mt$, and any $b_{\bo{x}_i}\in \bo{K}_{\bo{x}_i,\hat{t}_\ast}$, with $i=1,\dots,n$, the following property holds:
\[
\text{if $\sum\limits_{i: r\in \bo{x}_i}t_\ast(b_{\bo{x}_i})t_r(X_r)=(0)$ for all $r\in \bigcup_{i=1}^n \bo{x}_i$, then $\sum\limits_{i=1} ^n t_\ast(b_{\bo{x}_i})=0$.}
\]
\end{enumerate}}

We square this characterisation with established results from the literature.
First, we provide a direct connection with the equivariant Nica covariant injective representations when $P$ is a right LCM semigroup which recovers \cite[Proposition 4.3]{DKKLL20}.
Secondly, when each $X_p = \bC$ then we are at the setting of the semigroup representations and our description gives back the relations identified by Laca and Sehnem in \cite[Section 3]{LS21}; see also \cite[Remark 3.15]{KKLL21b} (note that injectivity is automatic in the latter case).
The connection with \cite{DKKLL20} and \cite{LS21} can be seen categorically, however we give independent proofs from first principles that validate the compatibility with our results herein.

As we will see, a description of Fock covariance just for the case of equivariant representations that are injective on $X$ is enough to tackle key questions in the theory.
One application of particular interest, and our main motivation for this work, concerns the reduced Hao--Ng isomorphism problem \cite{HN08} which we describe below.

\subsection{The Hao--Ng isomorphism problem}

Along with $\T(X)$ and $\T_{\cov}^{\fock}(X)$ there is a significant boundary quotient.
For the case $P=\bZ_+$ this is the Cuntz--Pimsner algebra $\O_X$, which generalises the graph C*-algebra and the Cuntz algebra, and was provided in full generality by Katsura \cite{Kat04} following the work of many hands.
Katsura \cite{Kat07} has also proven that $\O_X$ is the terminal object for the equivariant injective representations of $X$ (Fock covariance is automatically satisfied in this case), while Katsoulis and Kribs \cite{KK06} have shown that $\O_X$ is the C*-envelope of the tensor algebra of $X$.
Carlsen, Larsen, Sims and Vittadello \cite{CLSV11} have provided the existence of the terminal object in the case of a quasi-lattice, while Dor-On, the first author, Katsoulis, Laca and Li \cite{DKKLL20} have tackled the case for right LCM semigroups in terms of a terminal object for the coaction on the tensor algebra.
A pivotal step in the general direction was established by Sehnem \cite{Seh18} where the appropriate universal quotient $A \times_X P$ was coined.
The key property of the equivariant representations of $A \times_X P$ is that injectivity on $X$ implies injectivity on the fixed point algebra.
Equivariance allows to view $A \times_X P$ as the universal C*-algebra of the \emph{strong covariant bundle} denoted by $\S\C X$ in \cite{DKKLL20}.
The terminal object in \cite{DKKLL20} is then the reduced C*-algebra of $\S\C X$ denoted by $A \times_{X, \la} P$.
Sehnem \cite{Seh21} has shown that completely isometric representations of the tensor algebra $\T_\la(X)^+$ admit automatically a conditional expectation.
With this remarkable result at hand, Sehnem \cite{Seh21} resolved the old standing problem of identifying $A \times_{X, \la} P$ with the C*-envelope of $\T_\la(X)^+$, encompassing the work of many authors.

The reduced Hao--Ng isomorphism problem refers to a group action $\al$ of $\fH$ on $\T_\la(X)$ that leaves invariant every $\la_p(X_p)$; such an action is called a \emph{generalised gauge action}.
In this case there is an induced product system denoted by $X \rtimes_{\al, \la} \fH$ and an induced action $\dot{\al}$ on $A \times_{X,\la} P$.
The main question is whether there is a canonical $*$-isomorphism
\begin{equation} \label{eq:sc cp}
(A \rtimes_{\al, \la} \fH) \times_{X \rtimes_{\al, \la} \fH, \la} P
\stackrel{?}{\simeq} 
(A \times_{X, \la} P) \rtimes_{\dot{\al}, \la} \fH,
\end{equation}
i.e., if the reduced strong covariant functor commutes with the reduced crossed product functor.
The problem was introduced in \cite{HN08}, where it was solved for $P= \bZ_+$ and $\fH$ being an amenable locally compact group.
As applications of their results, Hao and Ng recover previous results on Hilbert bimodules by Abadie \cite{Aba07}, and on graph C*-algebras by Kumjian and Pask \cite{KP99} without using groupoid C*-algebras.
The importance of the Hao--Ng isomorphism problem has been further emphasised by Katsoulis \cite{Kat17} in connection to the work of Echterhoff, Kaliszewski, Quigg and Raeburn \cite{EKQR06} on imprimitivity theorems for C*-dynamical systems.

In a series of works, Katsoulis \cite{Kat17, Kat20}, and Katsoulis and Ramsey \cite{KR21}, have rephrased the reduced Hao--Ng isomorphism problem in terms of C*-envelopes and crossed products \cite{KR16}, i.e., whether there exists a canonical $*$-isomorphism
\begin{equation} \label{eq:cenv cp}
\cenv(\T_\la(X \rtimes_{\al, \la} \fH)^+) \stackrel{?}{\simeq} \cenv(\T_\la(X)^+) \rtimes_{\dot{\al}, \la} \fH.    
\end{equation}
The significance of this approach is manifested in \cite{Kat17} where the problem was resolved for $P = \bZ_+$ and $\fH$ discrete.
The reduced Hao--Ng isomorphism problem was later answered when $P$ defines an abelian lattice order in $G$ by Dor-On and Katsoulis \cite{DK20} when $\fH$ is discrete, and by Katsoulis \cite{Kat20} when $\fH$ is locally compact abelian.
Furthermore it has been answered when $P$ is a right LCM semigroup and $\fH$ is discrete by Dor-On, the first author, Katsoulis, Laca and Li \cite{DKKLL20}.
The question remains open for a generalised gauge action by a general group $\fH$.

The main approach in \cite{DKKLL20, DK20, Kat17, Kat20, KR21} has been to use the independence condition for right LCM semigroups and Nica covariance of the identity representation 
\begin{equation}
X \rtimes_{\al, \la} \fH \hookrightarrow \T_\la(X) \rtimes_{\al, \la} \fH
\end{equation}
in order to obtain a canonical $*$-isomorphism
\begin{equation}\label{eq:fc cp}
\T_\la(X \rtimes_{\al, \la} \fH) \simeq \T_\la(X) \rtimes_{\al, \la} \fH.
\end{equation}
From there it follows that
\begin{equation}\label{eq:ten cp}
\T_\la(X \rtimes_{\al, \la} \fH)^+ \simeq \T_\la(X)^+ \rtimes_{\al, \la} \fH,
\end{equation}
and then the C*-envelope theory of Katsoulis and Ramsey \cite{KR16} can be implemented.
In the absence of independence, it is unclear whether the $*$-isomorphism (\ref{eq:fc cp}) still holds, but it does not exclude the possibility that the completely isometric isomorphism (\ref{eq:ten cp}) is valid.
A careful investigation of the arguments of \cite{DKKLL20, DK20, Kat17, Kat20, KR21} reveals that Fock covariance of the identity representation $X \rtimes_{\al,\la} \fH \hookrightarrow \T_\la(X) \rtimes_{\al,\la} \fH$ enables the completely isometric isomorphism (\ref{eq:ten cp}).
Our characterisation applies towards this resolution when $\fH$ is discrete without restrictions on the product system $X$ and the semigroup $P$.
This recovers the aforementioned results appearing in \cite[Theorem 6.3]{DKKLL20}, \cite[Theorem 6.6]{DK20}, and \cite[Theorem 3.2]{Kat17}.

\medskip

\noindent
{\bf Theorem B.} \emph{(Theorem \ref{T:hao-ng})
Let $P$ be a unital subsemigroup of a discrete group $G$ and let $X$ be a product system over $P$.
Let $\al$ be a generalised gauge action of a discrete group $\fH$ on $\T_\la(X)$.
Then the identity representation $X\rtimes_{\al,\la}\fH \hookrightarrow \T_{\la}(X) \rtimes_{\al,\la} \fH$ lifts to a completely isometric isomorphism
\[
\T_\la(X \rtimes_{\al, \la} \fH)^+ \simeq \T_\la(X)^+ \rtimes_{\al, \la} \fH.
\]
Consequently, the reduced Hao--Ng isomorphism problem has an affirmative answer, i.e.,
\[
(A \rtimes_{\al, \la} \fH) \times_{X \rtimes_{\al, \la} \fH,\la} P
\simeq
(A \times_{X, \la} P) \rtimes_{\dot \al, \la} \fH,
\]
by a canonical $*$-isomorphism, where $\dot{\al}$ is the induced action of $\fH$ on $A \times_{X,\la} P$.}

\medskip

The structure of the manuscript is as follows.
In Section \ref{S:pre} we fix notation and provide some relevant results we will be using.
In Section \ref{S:fock covariant} we give the description of Fock covariance, and applications to compactly aligned product systems over right LCM semigroups in connection to \cite{DKKLL20, KL19a, KL19b}, as well as to semigroup representations in connection to \cite{LS21}.
In Section \ref{S:Hao-Ng} we provide the context for the reduced Hao--Ng isomorphism problem, and then proceed to its resolution when $\fH$ is discrete.
Throughout, we comment when non-degeneracy is not required, and we provide details when a different proof is needed, when using results from the literature.

\begin{acknow}
The authors acknowledge that this research work was supported within the framework of the National Recovery and Resilience Plan Greece 2.0, funded by the European Union - NextGenerationEU (Implementation Body: HFRI. Project name: Noncommutative Analysis: Operator Systems and Nonlocality. HFRI Project Number: 015825).
The second author acknowledges that this research work was supported by the Hellenic Foundation for Research and Innovation (HFRI) under the 5th Call for HFRI PhD Fellowships (Fellowship Number: 19145).
The second author acknowledges that this publication is based upon work from COST Action CaLISTA CA21109 supported by COST (European Cooperation in Science and Technology), www.cost.eu.
This manuscript will form part of the second author's PhD thesis.

The authors would like to thank Elias Katsoulis for his helpful comments and remarks on the draft of the manuscript.
The authors would like to thank the referee for the comments and remarks that helped improve the presentation.
\end{acknow}

\begin{open}
For the purpose of open access, the authors have applied a Creative Commons Attribution (CC BY) license to any Author Accepted Manuscript (AAM) version arising.
\end{open}

\section{Preliminaries} \label{S:pre}

We begin with some preliminaries on coactions on C*-algebras, semigroup C*-algebras and product systems to fix notation.
All groups and semigroups we consider in this section are discrete.
We will write $\otimes$ for the minimal tensor product between C*-algebras.
A map between algebras will be called canonical if it preserves generators of the same index.
For notational convenience, we will write $x^0 = 1$ for an element $x$ in a unital algebra with unit $1$.
If $E$ is a subset of a normed linear space $F$ then we will write $[E]$ for the closed linear span generated by $E$ in $F$.

\subsection{Coactions on C*-algebras}

For a discrete group $G$ we write $u_g$ for the generators of the universal group C*-algebra $\ca_{\max}(G)$, and $\la_g := \la(u_g)$ for the left regular representation $\la \colon \ca_{\max}(G) \to \ca_\la(G)$.
We write $\chi$ for the character on $\ca_{\max}(G)$.
By the universal property of $\ca_{\max}(G)$ there exists a faithful $*$-homomorphism
\[
\De \colon \ca_{\max}(G) \to \ca_{\max}(G) \otimes \ca_{\max}(G); u_g \mapsto u_g \otimes u_g.
\]
By Fell's absorption principle there exists a faithful $*$-homomorphism
\[
\De_\la \colon \ca_\la(G) \to \ca_\la(G) \otimes \ca_\la(G); \la_g \mapsto \la_g \otimes \la_g,
\]
with the additional property that $\De_\la \circ \la = (\la \otimes \la) \circ \De$.

There is a direct connection between coactions on C*-algebras, gradings and Fell bundles.
The reader is addressed to \cite{Exe97, Exe17, Qui96} where this theory is laid in full detail.
Since the group $G$ is discrete, the coactions we consider are automatically non-degenerate in the sense of \cite{Qui96}, see for example \cite[Remark 3.2]{DKKLL20}.

We will say that a C*-algebra $\C$ \emph{admits a coaction $\de$ by $G$} if there is a faithful $*$-homomorphism $\de \colon \C \to \C \otimes \ca_{\max}(G)$ such that \emph{the coaction identity}
\[
(\de \otimes \id) \circ \de = (\id \otimes \De) \circ \de
\]
is satisfied.
Since $G$ is discrete, the coaction identity is equivalent to the induced \emph{spectral spaces}
\[
\C_g := \{c \in \C \mid \de(c) = c \otimes u_g\} \foral g \in G,
\]
together spanning a dense subset of $\C$, see the proof of \cite[Proposition 2.6]{{Ng96}}.
If, in addition, the map $(\id \otimes \la) \circ \de$ is faithful, then $\de$ will be called \emph{normal}.
It follows that $\de$ is normal if and only if $\C$ admits a \emph{reduced} coaction by $G$, i.e., 
there is a faithful $*$-homomorphism $\de_\la \colon \C \to \C \otimes \ca_\la(G)$ such that
\[
(\de_\la \otimes \id) \circ \de_\la = (\id \otimes \De_\la) \circ \de_\la.
\]
We note that if $\de \colon \C \to \C \otimes \ca_{\max}(G)$ is a coaction, then 
\[
E:=(\id\otimes E_\la) \circ (\id\otimes \la) \circ \de
\]
defines a conditional expectation on $\C_e$, where $E_\la$ is the faithful conditional expectation of $\ca_\la(G)$. 
It follows that $\de$ is normal if and only if $E$ is faithful.

More generally, a collection $\{\C_g\}_{g \in G}$ of closed subspaces of a C*-algebra $\C$ is called a \emph{C*-grading of $\C$} if:
\begin{enumerate}
\item $\sum_{g \in G} \C_g$ is dense in $\C$;
\item $\C_g \C_h \subseteq \C_{gh}$; and
\item $\C_g^* \subseteq \C_{g^{-1}}$.
\end{enumerate}
If there exists a conditional expectation $E \colon \C \to \C_e$, then the subspaces are linearly independent, see \cite[Theorem 3.3]{Exe97}.
In this case the C*-grading is called \emph{topological}.
By definition, a coaction on a C*-algebra induces a topological C*-grading.

Gradings form the prototypical example of Fell bundles.
A \emph{Fell bundle} over a group $G$ is a collection $\B = \{\B_g\}_{g \in G}$ of Banach spaces, each of which is called a \emph{fiber}, such that:
\begin{enumerate}
\item there are bilinear and associative \emph{multiplication maps} from $\B_g \times \B_{g'}$ to $\B_{gg'}$ such that $\|b_g b_{g'} \| \leq \|b_g\| \, \|b_{g'}\|$;
\item there are conjugate linear \emph{involution maps} from $\B_g$ to $\B_{g^{-1}}$ such that $(b_g^*)^* = b_g$ and $\|b_g^*\| = \|b_g\|$;
\item $(b_g b_{g'})^* = (b_{g'})^* b_g^*$;
\item $\|b_g^* b_g\| = \| b_g\|^2$;
\item $b_g^* b_g \geq 0$ in $\B_e$.
\end{enumerate}
Note that conditions (i)--(iv) imply that $\B_e$ is in fact a C*-algebra and thus condition (v) makes sense.
A \emph{representation} $\Psi$ of a Fell bundle $\B$ over $G$ is a family $\{\Psi_g\}_{g \in G}$ of linear maps each one defined on $\B_g$ such that:
\begin{enumerate}
\item $\Psi_g(b_g) \Psi_h(b_h) = \Psi_{gh}(b_g b_h)$ for all $g, h \in G$; and
\item $\Psi_g(b_g)^* = \Psi_{g^{-1}}(b_g^*)$ for all $g \in G$.
\end{enumerate}
It follows that $\Psi_e$ is a $*$-homomorphism and thus contractive.
A standard C*-trick shows that every $\Psi_g$ is contractive.
We say that a representation $\Psi$ is \emph{injective} if $\Psi_e$ is injective; in this case every $\Psi_g$ is isometric.
A representation $\Psi$ is called \emph{equivariant} if there exists a $*$-homomorphism 
\[
\de \colon \ca(\Psi) \to \ca(\Psi) \otimes \ca(G); \Psi_g(b_g) \mapsto \Psi_g(b_g) \otimes u_g.
\]
It follows that $\de$ is faithful, with a left inverse given by the map $\id \otimes \chi$, and that it satisfies the coaction identity.

We write $\ca_{\max}(\B)$ for the universal C*-algebra with respect to the representations of $\B$ and we write 
\[
\hat{j} \colon \B \to \ca_{\max}(\B)
\]
for the canonical embedding.
We use the same symbol $\Psi$ for the $*$-homomorphism of $\ca_{\max}(\B)$ induced by a representation $\{\Psi_g\}_{g \in G}$ of $\B$ (as $\Psi \circ \hat{j}_g = \Psi_g$).
By universality, we have that $\{\hat{j}_g\}_{g\in G}$ is an equivariant representation of $\B$, and in particular by \cite[Proposition 17.9]{Exe17} we have that the map $\hat{j}_g \colon \B_g \to [\ca_{\max}(\B)]_g$ is an isometric isomorphism.
Hence, any Fell bundle arises as a C*-grading from some C*-algebra.

The left regular representation of  a Fell bundle $\B$ is defined by considering the left creation operators
\[
(\la_g(b_g) \xi)_{g'} = b_g \xi_{g^{-1} g'} \foral b_{g} \in \B,
\]
on $\ell^2(\B) : =\sum_{g \in G}^\oplus \B_g$ seen as the Hilbert module direct sum over $\B_e$.
We write $\ca_\la(\B)$ for the C*-algebra generated by $\la$.

By writing $j \colon \B \to \ell^2(\B)$ for the canonical embedding of each fiber at the corresponding summand of $\ell^2(\B)$, we can define the unitary
\[
W \colon \ell^2(\B) \otimes \ell^2(G) \to \ell^2(\B) \otimes \ell^2(G); j_g(b_g) \otimes \de_{g'} \mapsto j_g(b_g) \otimes \de_{gg'}.
\] 
It follows that $W$ implements a reduced coaction 
\[
\ca_\la(\B) \stackrel{\simeq}{\longrightarrow} \ca(\la_g(b_g) \otimes I) \stackrel{\ad_W}{\longrightarrow} \ca_\la(\B) \otimes \ca_\la(G),
\]
and thus a normal coaction.
By \cite[Theorem 3.3]{Exe97} we have that, if $\Psi$ is an equivariant representation of $\B$ that is injective on $\B_e$, then there are equivariant canonical $*$-epimorphisms
\[
\ca_{\max}(\B) \longrightarrow \ca(\Psi) \longrightarrow \ca_{\la}(\B).
\]
If, in addition, the coaction on $\ca(\Psi)$ is normal, then $\ca(\Psi) \simeq \ca_\la(\B)$.

More generally, let $\Psi$ be an equivariant (possibly non-injective) representation of $\B$, then by the proof of  \cite[Proposition 21.4]{Exe17} (or by using the Fourier transform) we obtain that every $\Psi_g$ has closed range, and hence we have the induced Fell bundle
\[
\Psi(\B) := \{\Psi_g(\B_g)\}_{g \in G}.
\]
Therefore there are equivariant $*$-epimorphisms, making the following diagram
\[
\xymatrix{
\ca_{\max}(\B) \ar[rr] \ar[d] & & \ca_\la(\B) \ar[d] \\
\ca_{\max}(\Psi(\B)) \ar[r] & \ca(\Psi) \ar[r] & \ca_\la(\Psi(\B))
}
\]
commutative, see \cite[Proposition 21.2, Proposition 21.3]{Exe17}.
We will make use of the following folklore fact for Fell bundles.

\begin{proposition} \label{P:eq dgrm}
Let $\B$ be a Fell bundle over a discrete group $G$, and $\Psi_1$ and $\Psi_2$ be equivariant representations of $\B$.
Then
\[
\ker \Psi_1 \cap [\ca_{\max}(\B)]_e \subseteq \ker \Psi_2 \cap [\ca_{\max}(\B)]_e
\]
if and only if 
\[
\ker \Psi_1 \cap [\ca_{\max}(\B)]_g \subseteq \ker \Psi_2 \cap [\ca_{\max}(\B)]_g
\foral
g \in G.
\]
If any of the above holds, and $\Psi_1(\B)$ is the induced Fell bundle in $\ca(\Psi_1)$, then $\Psi_2$ defines a representation of $\Psi_1(\B)$, and thus there is a commutative diagram
\[
\xymatrix{
\ca_{\max}(\B) \ar[rr] \ar[dr] & & \ca(\Psi_2) \\
& \ca_{\max}(\Psi_1(\B)) \ar[ur] &
}
\]
of equivariant $*$-epimorphisms.
\end{proposition}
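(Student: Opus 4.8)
The plan is to prove the equivalence first, and then feed the ``for all $g\in G$'' version of the inclusion into a direct construction of the representation of $\Psi_1(\B)$; the commutative diagram then falls out by universality.

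For the equivalence, only the forward implication requires an argument, since the reverse is the special case $g=e$. The idea is to reduce a $g$-fibre element to the unit fibre by squaring: given $x\in\ker\Psi_1\cap[\ca_{\max}(\B)]_g$, the element $x^*x$ lies in $[\ca_{\max}(\B)]_{g^{-1}}[\ca_{\max}(\B)]_g\subseteq[\ca_{\max}(\B)]_e$ and in $\ker\Psi_1$, hence by hypothesis in $\ker\Psi_2\cap[\ca_{\max}(\B)]_e$; the C*-identity in $\ca(\Psi_2)$ then forces $\|\Psi_2(x)\|^2=\|\Psi_2(x^*x)\|=0$, so $x\in\ker\Psi_2\cap[\ca_{\max}(\B)]_g$. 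This is the entire content of the first part.

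For the second assertion, I would invoke the facts recalled just before the statement: $\Psi_1(\B)=\{(\Psi_1)_g(\B_g)\}_{g\in G}$ is a Fell bundle inside $\ca(\Psi_1)$ whose fibre maps $(\Psi_1)_g\colon\B_g\to(\Psi_1)_g(\B_g)$ are surjective, and $\hat{j}_g\colon\B_g\to[\ca_{\max}(\B)]_g$ is an isometric isomorphism with $(\Psi_i)_g=\Psi_i\circ\hat{j}_g$. Then I would set $\phi_g\colon(\Psi_1)_g(\B_g)\to\ca(\Psi_2)$, $(\Psi_1)_g(b_g)\mapsto(\Psi_2)_g(b_g)$. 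Well-definedness of $\phi_g$ is precisely where the ``for all $g$'' inclusion is used, and it is the crux of the proposition: if $(\Psi_1)_g(b_g)=(\Psi_1)_g(b_g')$ then $\hat{j}_g(b_g-b_g')\in\ker\Psi_1\cap[\ca_{\max}(\B)]_g\subseteq\ker\Psi_2$, so $(\Psi_2)_g(b_g)=(\Psi_2)_g(b_g')$. The Fell-bundle representation axioms for $\phi=\{\phi_g\}_{g\in G}$ are then immediate from the identities $(\Psi_i)_g(b_g)(\Psi_i)_h(b_h)=(\Psi_i)_{gh}(b_gb_h)$ and $(\Psi_i)_g(b_g)^*=(\Psi_i)_{g^{-1}}(b_g^*)$ satisfied by $\Psi_1$ and $\Psi_2$; contractivity of each $\phi_g$ comes for free by the standard C*-trick, and $\ca(\phi)=\ca(\Psi_2)$ since $\phi(\Psi_1(\B))$ generates $\ca(\Psi_2)$.

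Finally, for the diagram I would note that the $*$-homomorphism $\de$ witnessing equivariance of $\Psi_2$, which sends $(\Psi_2)_g(b_g)$ to $(\Psi_2)_g(b_g)\otimes u_g$, also witnesses equivariance of $\phi$ since $\phi_g((\Psi_1)_g(b_g))=(\Psi_2)_g(b_g)$; hence $\phi$ induces an equivariant $*$-epimorphism $\ca_{\max}(\Psi_1(\B))\to\ca(\Psi_2)$. Precomposing it with the canonical equivariant $*$-epimorphism $\ca_{\max}(\B)\to\ca_{\max}(\Psi_1(\B))$ recalled in the excerpt and checking on the generators $\hat{j}_g(b_g)$ that the composite equals the canonical map $\ca_{\max}(\B)\to\ca(\Psi_2)$ induced by $\Psi_2$ gives commutativity. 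The only genuine obstacle in the whole argument is the well-definedness step for $\phi_g$, which is exactly why the equivalence in the first part is needed; everything else is routine bookkeeping with the Fell-bundle and coaction axioms.
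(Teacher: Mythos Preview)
Your proposal is correct and follows essentially the same approach as the paper: the $x^*x$ reduction to the unit fibre for the equivalence, and the fibrewise definition $\phi_g((\Psi_1)_g(b_g)) := (\Psi_2)_g(b_g)$ with well-definedness supplied by the kernel inclusion for the second part. The paper's proof is slightly terser on the equivariance and commutativity checks, but the substance is identical.
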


\begin{proof}
It is obvious that, if the inclusion holds for all $g \in G$, then in particular it holds for $g=e$.
Conversely, suppose that
\[
\ker \Psi_1 \cap [\ca_{\max}(\B)]_e \subseteq \ker \Psi_2 \cap [\ca_{\max}(\B)]_e,
\]
and let $x \in \ker \Psi_1 \cap [\ca_{\max}(\B)]_g$ for $g \in G$.
Then
\[
x^*x \in \ker \Psi_1 \cap [\ca_{\max}(\B)]_e \subseteq \ker \Psi_2 \cap [\ca_{\max}(\B)]_e,
\]
and so $x \in \ker \Psi_2$.
Consequently we derive that $x \in \ker \Psi_2 \cap [\ca_{\max}(\B)]_g$, as required.

For the second part of the proof, the existence of the maps
\[
\ca_{\max}(\B) \to \ca_{\max}(\Psi_1(\B))
\qand
\ca_{\max}(\B) \to \ca(\Psi_2)
\]
follows by the discussion prior to the statement and the universal property of the universal C*-algebras.
Due to the inclusion of the kernels, for every $g \in G$ we have a commutative diagram
\[
\xymatrix{
\B_g \ar[rr]^{\Psi_2|_{\B_g}} \ar[dr]_{\Psi_1|_{\B_g}} & & \Psi_2(\B_g) \\
& \Psi_1(\B_g) \ar[ur]_{\Psi_g} & 
}
\]
such that $\Psi_g(\Psi_1(b_g)) = \Psi_2(b_g)$ for every $b_g \in \B_g$, and $g \in G$.
Since $\Psi_1$ and $\Psi_2$ are representations of $\B$, we get that the collection $\{\Psi_g\}_{g \in G}$ defines a representation $\Psi$ from $\Psi_1(\B)$ to $\ca(\Psi_2)$. 
Hence $\Psi$ promotes to a $*$-representation of $\ca_{\max}(\Psi_1(\B))$ in $\ca(\Psi_2)$.
By definition this map closes the diagram, and the proof is complete.
\end{proof}

We will be interested in graded quotients of C*-algebras of Fell bundles.
If $\de \colon \C \to \C \otimes \ca_{\max}(G)$ is a coaction on a C*-algebra $\C$, then we say that an ideal $\I \lhd \C$ is \emph{induced} if 
\[
\I = \sca{\I \cap [\C]_e}.
\]
In that case, the canonical quotient map $q_\I$ is equivariant, i.e., the coaction $\de$ descends to a coaction on $\C/\I$, see \cite[Proposition 3.11]{Exe97} and \cite[Proposition A.1]{CLSV11} for the full details.
The following proposition is perhaps folklore, and we include a proof for completeness.

\begin{proposition}\label{P:qnt bnd}
Let $\B$ be a Fell bundle over a discrete group $G$ and let $\I\lhd \ca_{\max}(\B)$ be an induced ideal.
Let $q_\I(\B)$ be the induced Fell bundle from the coaction on $\ca_{\max}(\B)/\I$, where $q_\I \colon \ca_{\max}(\B) \to \ca_{\max}(\B)/\I$ is the canonical quotient map.
Then there exists a commutative diagram
\[
\xymatrix{
\ca_{\max}(\B) \ar[rr]^{\Phi} \ar[dr]_{q_{\I}} & & \ca_{\max}(q_\I(\B))  \\
& \ca_{\max}(\B)/\I \ar[ur]_{\Psi} & 
}
\]
of equivariant $*$-epimorphisms such that $\Psi$ is a $*$-isomorphism.
\end{proposition}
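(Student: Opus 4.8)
The plan is to obtain $\Psi$ as the isomorphism that a canonical map $\Phi \colon \ca_{\max}(\B) \to \ca_{\max}(q_\I(\B))$ induces after passing to the quotient by $\I$; the crux is to show that $\ker \Phi = \I$. Since $\I$ is induced, the quotient map $q_\I \colon \ca_{\max}(\B) \to \ca_{\max}(\B)/\I$ is equivariant, so $q_\I \circ \hat{j}$ is an equivariant representation of $\B$. The Fell bundle $q_\I(\B)$ of spectral subspaces of the quotient coaction then coincides with $\{q_\I(\hat{j}_g(\B_g))\}_{g \in G}$, since the Fourier projections of $\ca_{\max}(\B)$ descend compatibly through $q_\I$. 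By the discussion preceding Proposition \ref{P:eq dgrm}, each $q_\I(\hat{j}_g(\B_g))$ is closed and there is a canonical equivariant $*$-epimorphism
\[
\Phi \colon \ca_{\max}(\B) \longrightarrow \ca_{\max}(q_\I(\B)), \qquad \Phi(\hat{j}_g(b_g)) = \hat{\jmath}_g\bigl( q_\I(\hat{j}_g(b_g)) \bigr),
\]
where $\hat{\jmath}$ denotes the canonical embedding of $q_\I(\B)$; it is surjective because its range contains every generator of $\ca_{\max}(q_\I(\B))$.

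Next I would identify $\ker \Phi$ with $\I$ by reducing to the unit fibre. By \cite[Proposition 17.9]{Exe17}, the maps $\hat{j}_e \colon \B_e \to [\ca_{\max}(\B)]_e$ and $\hat{\jmath}_e \colon q_\I(\B)_e \to [\ca_{\max}(q_\I(\B))]_e$ are isometric isomorphisms, and under these identifications the restriction of $\Phi$ to the unit fibre is nothing but the restriction of $q_\I$ to $\B_e$; hence $\ker \Phi \cap [\ca_{\max}(\B)]_e = \I \cap [\ca_{\max}(\B)]_e$. Now $\ker \Phi$ is an induced ideal because $\Phi$ is equivariant, and $\I$ is induced by hypothesis, so
\[
\ker \Phi = \sca{\ker \Phi \cap [\ca_{\max}(\B)]_e} = \sca{\I \cap [\ca_{\max}(\B)]_e} = \I .
\]
Alternatively, one feeds the pair $q_\I, \Phi$ into Proposition \ref{P:eq dgrm} in both directions to get $\ker \Phi \cap [\ca_{\max}(\B)]_g = \I \cap [\ca_{\max}(\B)]_g$ for every $g \in G$, and then invokes gradedness of the two ideals.

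Since $\ker \Phi = \I = \ker q_\I$ and $\Phi$ is onto, the map $\Phi$ factors uniquely through $q_\I$ as $\Phi = \Psi \circ q_\I$ for a $*$-isomorphism $\Psi \colon \ca_{\max}(\B)/\I \to \ca_{\max}(q_\I(\B))$, which closes the stated diagram; equivariance of $\Psi$ follows from that of $q_\I$ and $\Phi$ together with surjectivity of $q_\I$. I expect the only genuinely delicate point to be the middle step: keeping the three incarnations of the unit fibre straight ($\B_e$, $[\ca_{\max}(\B)]_e$, and $q_\I(\B)_e \subseteq [\ca_{\max}(q_\I(\B))]_e$), citing the precise form of the Exel reference so that the relevant identifications are isometric, and recording the elementary fact that an induced ideal equals the ideal generated by its intersection with the unit fibre; everything else is formal.
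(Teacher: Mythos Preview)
Your construction of $\Phi$ and the verification that $\I \subseteq \ker\Phi$ (via the unit fibre) are correct, and this part matches the paper. The gap is in the reverse containment: you assert that ``$\ker\Phi$ is an induced ideal because $\Phi$ is equivariant,'' but this is false in general. For a concrete counterexample, take the regular representation $\Lambda \colon \ca_{\max}(\B) \to \ca_\la(\B)$ for any non-amenable Fell bundle (e.g.\ the trivial line bundle over a free group). This $\Lambda$ is equivariant and injective on every fibre, so $\ker\Lambda \cap [\ca_{\max}(\B)]_e = 0$; yet $\ker\Lambda \neq 0$, so $\ker\Lambda$ is not induced. Your ``alternative'' via Proposition~\ref{P:eq dgrm} has the same defect: knowing $\ker\Phi \cap [\ca_{\max}(\B)]_g = \I \cap [\ca_{\max}(\B)]_g$ for every $g$ only pins down $\ker\Phi$ if you already know it is graded, which is precisely what is in question.

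The paper avoids this by not computing $\ker\Phi$ at all. From $\I \subseteq \ker\Phi$ one gets the $*$-epimorphism $\Psi \colon \ca_{\max}(\B)/\I \to \ca_{\max}(q_\I(\B))$, and then, since $\ca_{\max}(\B)/\I$ carries a topological grading by $q_\I(\B)$, \cite[Theorem 19.5]{Exe17} supplies a canonical $*$-epimorphism $\ca_{\max}(q_\I(\B)) \to \ca_{\max}(\B)/\I$ in the opposite direction. Checking on fibres shows it is inverse to $\Psi$. This last step is genuinely needed; it is the piece your argument is missing.
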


\begin{proof}
Since $\I$ is an induced ideal we have that $q_\I$ is equivariant.
Moreover $\ca_{\max}(\B)/\I$ admits a coaction and let $q_\I(\B) := \{[\ca_{\max}(\B)/\I]_g\}_{g\in G}$ be the induced Fell bundle.
In order to make a distinction, we will write 
\[
\hat{j}^{\B} \colon \B \to \ca_{\max}(\B)
\qand
\hat{j}^{q_{\I}(\B)} \colon q_{\I}(\B) \to \ca_{\max}(q_\I(\B))
\]
for the induced embeddings of the corresponding Fell bundles.

First note that the family $\left\{\hat{j}^{q_\I(\B)}_g\circ q_{\I}\circ \hat{j}_g ^{\B}\right\}_{g\in G}$ is a representation of $\B$ and hence there is an induced $*$-epimorphism 
\[
\Phi \colon \ca_{\max}(\B) \to \ca_{\max}(q_\I(\B)).
\]
By the definition of $\Phi$ we have
\[
\I \cap [\ca_{\max}(\B)]_e = \I \cap \hat{j}^{\B}_e(\B_e) \subseteq  \hat{j}^{\B}_e\left(\ker \left( \hat{j}^{q_\I(\B)}_e\circ q_{\I}\circ\hat{j}^{\B}_e \right)\right)\subseteq \ker\Phi,
\]
and thus we have 
\[
\I = \sca{\I \cap [\ca_{\max}(\B)]_e} \subseteq \ker\Phi.
\]
Therefore we obtain a commutative diagram
\[
\xymatrix{
\ca_{\max}(\B) \ar[rr]^{\Phi} \ar[dr]_{q_{\I}} & & \ca_{\max}(q_{\I}(\B))  \\
& \ca_{\max}(\B)/\I \ar[ur]_{\Psi} & 
}
\]
of $*$-epimorphisms.
By checking on the generators we have that the $*$-epimorphisms are equivariant.

On the other hand, since $q_\I(\B)$ is a topological C*-grading of $\ca_{\max}(\B)/\I$, by \cite[Theorem 19.5]{Exe17} there exists a canonical $*$-epimorphism 
\[
\ca_{\max}(q_\I(\B))\to \ca_{\max}(\B)/\I,
\]
which is the inverse of $\Psi$.
Hence $\Psi$ is a $*$-isomorphism, and the proof is complete.
\end{proof}

\subsection{Semigroups}

We will require some elements from the theory of semigroup algebras and right ideals of a semigroup from the work of Li \cite{Li12}, while we also fix notation.
For a unital discrete left-cancellative semigroup $P$, we let the left creation (isometric) operators given by
\[
V_p \colon \ell^2(P) \longrightarrow \ell^2(P) ; \de_s \mapsto \de_{ps},
\]
and we write $\ca_\la(P) := \ca(V_p \mid p \in P)$.
The restriction to the diagonal $\ell^\infty(P)$ induces a faithful conditional expectation on $\ca_\la(P)$.
For a set $Z \subseteq P$ we will write $E_{[Z]}$ for the projection on $[\{\de_p \mid p \in Z\}] \subseteq \ell^2(P)$.
It follows that
\[
E_{[Z_1]} E_{[Z_2]} = E_{[Z_1 \cap Z_2]}
\foral
Z_1, Z_2 \subseteq P.
\]

For a set $Z \subseteq P$ and $p \in P$ we write
\[
pZ:=\{px \mid x \in Z\}
\qand
p^{-1}Z := \{y \in P \mid py \in Z\}.
\]
By definition we have $p^{-1} P = P$.
We write $\J$ for the smallest family of right ideals of $P$ containing $P$ and $\mt$ that is closed under left multiplication, taking pre-images under left multiplication (as in the sense above) and finite intersections, i.e.,
\[
\J := \left\{ \bigcap\limits_{j=1}^N q_{j, n_j}^{-1} p_{j, n_j} \dots q_{j, 1}^{-1} p_{j, 1} P \mid N, n_j \in \bN; p_{j,k}, q_{j,k} \in P \right\} \bigcup \{\mt\}.
\]
The right ideals of $P$ in $\J$ are called \emph{constructible}.
By the proof of \cite[Lemma 3.3]{Li12} we obtain a reduced form for the elements of $\J$, since
\[
q_n^{-1} p_n \dots q_1^{-1} p_1 p_1^{-1} q_1 \dots p_n^{-1} q_n Z = (q_n^{-1} p_n \dots q_1^{-1} p_1 P) \cap Z
\]
for every $p_i, q_i \in P$ and every subset $Z$ of $P$.
Consequently the set of constructible ideals is automatically closed under finite intersections, i.e.,
\[
\J = \left\{ q_n^{-1} p_n \dots q_1^{-1} p_1 P \mid n \in \bN; p_i, q_i \in P \right\} \cup \{\mt\}.
\]
We will write $\Bx, \By$ etc.\ for the elements in $\J$. 

Henceforth we will assume that $P$ is a unital subsemigroup of a discrete group $G$.
In this case we have
\[
q^{-1} p Z = P \cap q^{-1} \cdot p \cdot Z, \text{ for } Z \subseteq P,
\]
and therefore inductively we obtain
\[
q_n^{-1} p_n \dots q_1^{-1} p_1 P
=
P \cap (q_n^{-1} \cdot p_n \cdot P) \cap (q_n^{-1} \cdot p_n \cdot q_{n-1}^{-1} \cdot p_{n-1} \cdot P) \cap \cdots \cap (q_n^{-1} \cdot p_n \cdots q_1^{-1} \cdot p_1 \cdot P).
\]
Note also that for every $\Bx\in\J$ we can pick $p_1,q_1,\dots,p_n,q_n\in P$ such that $p_1^{-1} q_1\cdots p_n^{-1}q_n =e$ and $\Bx=q_n^{-1} p_n \dots q_1^{-1} p_1 P$.
Moreover, $\ca_\la(P)$ admits a normal coaction by $G$ such that
\[
[\ca_\la(P)]_g = \ol{\spn}\{V_{p_1}^* V_{q_1} \cdots V_{p_n}^* V_{q_n} \mid n \in \bN; p_1, q_1, \dots, p_n, q_n \in P; p_1^{-1} q_1\cdots p_n^{-1}q_n = g\}.
\]
The coaction by $G$ is implemented by the unitary operator
\[
U \colon \ell^2(P) \otimes \ell^2(G) \to \ell^2(P) \otimes \ell^2(G); U(\de_s \otimes \de_g) = \de_s \otimes \de_{sg} \foral s \in P, g \in G.
\]
A routine calculation shows that the $*$-homomorphism
\[
\ca_\la(P) \stackrel{\simeq}{\longrightarrow}
\ca(V_p \otimes I \mid p \in P) \stackrel{\ad_{U}}{\longrightarrow}
\ca(V_p \otimes \la_p \mid p \in P)
\]
is a reduced coaction, and thus it lifts to a normal coaction on $\ca_\la(P)$.
The induced faithful conditional expectation on $\ca_\la(P)$ coincides with the compression to the diagonal $\ell^\infty(P)$.
In the presence of an overlying group there is an explicit formula for the projection $E_{[\Bx]}$ for $\Bx \in \J$, obtained in \cite[Lemma 3.1]{Li12}, i.e.,
\[
E_{[\Bx]} = V_{p_1}^* V_{q_1} \cdots V_{p_n}^* V_{q_n},
\]
for any $p_1,q_1, \dots, p_n, q_n \in P$ satisfying $\Bx=q_n^{-1} p_n \dots q_1^{-1} p_1 P$ and  $p_1^{-1} q_1 \cdots p_n^{-1} q_n = e$ in $G$.

\subsection{C*-correspondences}

The theory of Hilbert modules over C*-algebras is well-developed.
The reader is addressed to \cite{Lan95,MT05} for an excellent introduction to the subject; see also \cite{Kat03} and the references therein.

A \emph{C*-correspondence} $X$ over $A$ is a right Hilbert module over $A$ with a left action given by a $*$-homomorphism $\vphi_X \colon A \to \L(X)$, where $\L(X)$ denotes the C*-algebra of adjointable operators on $X$.
A C*-correspondence $X$ over $A$ is called \emph{non-degenerate} if $[\vphi_X(A) X] = X$.
We write $\K(X)$ for the closed linear span of the rank one adjointable operators $\theta_{\xi, \eta}$.
For two C*-corresponden\-ces $X, Y$ over the same $A$ we write $X \otimes_A Y$ for the balanced tensor product over $A$.
We say that $X$ is \emph{unitarily equivalent} to $Y$ (symb. $X \simeq Y$) if there is a surjective adjointable operator $U \in \L(X,Y)$ that is an $A$-bimodule map and $\sca{U \xi, U \eta} = \sca{\xi, \eta}$ for all $\xi, \eta \in X$.

A representation of a C*-correspondence $X$ over $A$ is a pair $(t_0,t_1)$ where $t_0\colon A\to \B(H)$ is a $*$-homomorphism and $t_1\colon X\to \B(H)$ is a linear map that satisfies the following:
\begin{enumerate}
    \item $t_0(a)t_1(\xi)=t_1(\varphi_X(a)\xi)$ for all $\xi\in X$ and $a\in A$,
    \item $t_1(\xi)^* t_1(\eta) = t_0(\sca{\xi,\eta})$ for all $\xi, \eta \in X$.
\end{enumerate} 
We note that condition (ii) also implies that $t_1(\xi)t_0(a)=t_1(\xi a)$.  
Every representation $(t_0,t_1)$ as above defines a $*$-representation 
\[
t^{(1)}\colon \K(X)\to \B(H); \theta_{\xi, \eta} \mapsto t(\xi) t(\eta)^*
\foral \xi, \eta \in X.
\] 
If $t_0$ is injective, then both $t_1$ and $t^{(1)}$ are isometric.

\subsection{Product systems}

In his seminal paper, Fowler \cite{Fow02} formalised product systems motivated by the work of Nica \cite{Nic92}, Pimsner \cite{Pim95}, as well as his work with Raeburn \cite{FR98}.
However this definition imposes further conditions on the family of C*-correspondences.
In this subsection we consider a more general framework that still allows the structural constructions relevant to product systems to be formulated yet without these extra conditions.
In Subsection \ref{Ss:Fow} we will give the connection with, and the comparison to, the product systems in the sense of Fowler.

Let $P$ be a unital left-cancellative discrete semigroup.
We say that a family $X = \{X_p\}_{p \in P}$ of closed operator spaces in a common $\B(H)$ is a \emph{(concrete) product system} if the following are satisfied:
\begin{enumerate}
\item $A := X_{e}$ is a C*-algebra;
\item $X_p \cdot X_q \subseteq X_{pq}$ for all $p, q \in P$;
\item $X_p^* \cdot X_{pq} \subseteq X_q$ for all $p, q \in P$.
\end{enumerate}
Uniqueness of $q \in P$ in item (iii) follows since $P$ is left-cancellative.
Moreover it follows that each $X_p$ is a C*-correspondence over $A$.

The properties of a concrete product system are enough to provide a Fock space representation.
Towards this end, consider $X$ in some $\B(H)$, and let $\B(H)$ with its trivial C*-correspondence structure.
We will be writing 
\[
\sca{\cdot, \cdot}_{p} \colon X_p \times X_p \longrightarrow A ; (\xi_p, \eta_p) \mapsto \sca{\xi_p, \eta_p}_p := \xi_p^* \cdot \eta_p,
\]
for the inner product induced on the $X_p$.
For every $\xi_p \in X_p$ we define the multiplication operator
\[
M_{\xi_p}^{q, pq} \colon X_q \longrightarrow X_{pq} ; \eta_q \mapsto \xi_p \cdot \eta_q,
\]
with the multiplication taking place inside $\B(H)$, satisfying
\[
\|M_{\xi_p}^{q, pq}\|_{\B(X_q, X_{pq})} \leq \nor{\xi_p}_{X_p}.
\]
Associativity of the product yields $M_{\xi_p}^{q, pq} \in \L(X_q, X_{pq})$ with
\[
(M_{\xi_p}^{q, pq} )^* \colon X_{pq} \to X_q; 
\eta_{pq}\mapsto \xi_p^* \cdot \eta_{pq} \in X_q.
\]
Consider the Fock space $\F X := \sum^\oplus_{r \in P} X_r$, as a right Hilbert module over $A$.
For every $\xi_p \in X_p$, define the \emph{left creation operator}
\[
\la_p(\xi_p) := \sumoplus_{r \in P} M_{\xi_p}^{r, pr} \; \text{ so that } \; \la_p(\xi_p)^* = \sumoplus_{r \in P} (M_{\xi_p}^{r, pr})^*,
\]
where the sum is taken in the s*-topology, and with the understanding that we are embedding $\L(X_r, X_{pr}) \hookrightarrow \L(\F X)$ as the $(pr,r)$-entry.
By applying on $\eta_r \in X_r \subseteq \F X$ we derive that
\[
\la_p(\xi_p) \eta_r = \xi_p \cdot \eta_r
\qand
\la_p(\xi_p)^* \eta_r =
\begin{cases}
\xi_p^* \cdot \eta_r & \text{if } r \in pP, \\
0 & \text{if } r \notin pP.
\end{cases}
\]
We write $\la \colon X \to \L(\F X)$ for this map, and we refer to $\la=\{\la_p\}_{p\in P}$ as the \emph{Fock representation of $X$}.
We will write $\T_\la(X)$ for the \emph{Fock C*-algebra} defined as $\ca(\la_p(X_p) \mid p \in P)$.

More generally, a \emph{(Toeplitz) representation $t = \{t_p\}_{p \in P}$ of $X$} consists of a family of linear maps $t_p$ of $X_p$ such that:
\begin{enumerate}
\item $t_e$ is a $*$-representation of $A := X_e$;
\item $t_p(\xi_p) t_q(\xi_q) = t_{pq}(\xi_p \cdot \xi_q)$ for all $\xi_p \in X_p$ and $\xi_q \in X_q$;
\item $t_p(\xi_p)^* t_{pq}(\xi_{pq}) = t_q( \xi_p^* \cdot \xi_{pq})$ for all $\xi_p \in X_p$ and $\xi_{pq} \in X_{pq}$.
\end{enumerate}
By definition every pair $(t_e,t_p)$ is a representation of the C*-correspondence $X_p$ over $A$.
A representation $t$ is called \emph{injective} if $t_{e}$ is injective on $A$.
It transpires that the Fock representation is an injective representation of $X$.
The \emph{Toeplitz algebra $\T(X)$ of $X$} is the universal C*-algebra generated by $X=\{X_p\}_{p\in P}$ with respect to the representations of $X$.
If $t$ defines a representation of $X$, then we will write $t_\ast \colon \T(X) \to \B(H)$ for the induced $*$-representation of $\T(X)$ and
\[
\ca(t) \equiv \ca(t_\ast) := t_\ast(\T(X)) = \ca(t_p(X_p) \mid p \in P).
\]

Henceforth we will assume that $P$ is a unital subsemigroup of a group $G$.

\begin{proposition} \label{P:mult}
Let $P$ be a unital subsemigroup of a discrete group $G$ and let $X$ be a product system over $P$.
Let $t$ be a representation of $X$ and let $\xi_r \in X_r$, $\xi_{p_i} \in X_{p_i}$, and $\xi_{q_i} \in X_{q_i}$ for $r, p_1, q_1, \dots, p_n, q_n \in P$ and $\eps, \eps' \in \{0,1\}$.
If $r \in q_n^{-\eps'} p_n \dots q_1^{-1} p_1^{\eps} P$, then $p_1^{-\eps} q_1 \cdots p_n^{-1}q_n^{\eps'} r \in P$ and
\begin{equation*}
\left( t_{p_1}(\xi_{p_1})^* \right)^{\eps} t_{q_1}(\xi_{q_1}) \cdots t_{p_n}(\xi_{p_n})^* t_{q_n} (\xi_{q_n})^{\eps'} t_r(\xi_r)
=
t_{p_1^{-\eps} q_1 \cdots p_n^{-1}q_n^{\eps'} r} \left( (\xi_{p_1}^*)^{\eps} \xi_{q_1} \cdots \xi_{p_n}^* \xi_{q_n}^{\eps'} \xi_r \right).
\end{equation*}
\end{proposition}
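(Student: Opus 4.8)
The plan is to isolate the two elementary merging moves coming from the definition of a representation of $X$ and iterate them. First I would record: by the second relation defining a representation, $t_q(\xi_q)\,t_s(\xi_s)=t_{qs}(\xi_q\xi_s)$ holds unconditionally, with $\xi_q\xi_s\in X_{qs}$ by axiom (ii) of a product system; and by the third relation, whenever $s\in pP$ — write $s=pq$ with $q=p^{-1}s\in P$ — one has $t_p(\xi_p)^*\,t_s(\xi_s)=t_{p^{-1}s}(\xi_p^*\xi_s)$, where $\xi_p^*\xi_s\in X_{p^{-1}s}$ by axiom (iii) of a product system. These are the only algebraic inputs; everything else is semigroup bookkeeping.

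Next I would settle the membership assertion. Set $h:=q_n^{-\eps'}p_n\cdots q_1^{-1}p_1^{\eps}$. The key point is that the constructible set $q_n^{-\eps'}p_n\cdots q_1^{-1}p_1^{\eps}P$ is contained in the group translate $hP$: at each stage the semigroup pre-image $q^{-1}(\,\cdot\,)$ sits inside the corresponding group translate $q^{-1}\cdot(\,\cdot\,)$, left multiplication by $p_i$ (or by $p_1^{0}=e$) is the same operation in $P$ and in $G$, and all of these respect inclusions; this is precisely the content of the intersection formula recalled in Section \ref{S:pre}. Hence $r\in hP$, so $h^{-1}r=p_1^{-\eps}q_1\cdots p_n^{-1}q_n^{\eps'}r\in P$.

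For the identity I would induct on $n$, disposing of $\eps'$ at the start of each step. If $\eps'=1$, apply the first merging move to $t_{q_n}(\xi_{q_n})\,t_r(\xi_r)=t_{q_nr}(\xi_{q_n}\xi_r)$; since $r$ lies in $q_n^{-1}\big(p_nq_{n-1}^{-1}\cdots q_1^{-1}p_1^{\eps}P\big)$, one gets $q_nr\in p_nq_{n-1}^{-1}\cdots q_1^{-1}p_1^{\eps}P$, so we are reduced to the same statement with $\eps'=0$ and with $r,\xi_r$ replaced by $q_nr,\xi_{q_n}\xi_r$. When $\eps'=0$, the rightmost factor of the word is $t_{p_n}(\xi_{p_n})^*$ for $n\ge 2$ (respectively $(t_{p_1}(\xi_{p_1})^*)^{\eps}$ for $n=1$, which is vacuous when $\eps=0$); since the relevant constructible set $p_nq_{n-1}^{-1}\cdots q_1^{-1}p_1^{\eps}P$ is contained in $p_nP$, the hypothesis forces $r\in p_nP$, so the second merging move gives $t_{p_n}(\xi_{p_n})^*\,t_r(\xi_r)=t_{p_n^{-1}r}(\xi_{p_n}^*\xi_r)$, with $p_n^{-1}r$ lying in $q_{n-1}^{-1}\cdots q_1^{-1}p_1^{\eps}P$. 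This is the statement for $n-1$ with $\eps'=1$ and updated data, to which the inductive hypothesis applies; the base case $n=1$ is a single application of one or both merging moves together with the convention $x^{0}=1$. Running the recursion precisely accumulates $p_1^{-\eps}q_1\cdots p_n^{-1}q_n^{\eps'}$ onto $r$ and $(\xi_{p_1}^*)^{\eps}\xi_{q_1}\cdots\xi_{p_n}^*\xi_{q_n}^{\eps'}$ onto $\xi_r$, which is the claimed formula.

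I do not expect a real obstacle: once the two merging moves are isolated the argument is routine. The only thing that needs genuine care is the bookkeeping — tracking the accumulating group element and correspondence element through the recursion, and verifying at each step that the updated index still lies in the appropriate constructible ideal so that the third relation may be legitimately invoked, which is exactly where the intersection formula and the distinction between the semigroup operation $q^{-1}Z$ and the group translate $q^{-1}\cdot Z$ from Section \ref{S:pre} enter.
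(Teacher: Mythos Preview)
Your proposal is correct and follows essentially the same route as the paper: isolate the two elementary merging moves $t_q(\xi_q)t_s(\xi_s)=t_{qs}(\xi_q\xi_s)$ and $t_p(\xi_p)^*t_s(\xi_s)=t_{p^{-1}s}(\xi_p^*\xi_s)$ (the latter valid when $s\in pP$), then iterate from the right, checking at each step that the updated index lies in the appropriate constructible ideal. The only cosmetic differences are that the paper peels off the pair $p_k^{-1}q_k$ in one combined step (recording a third preliminary move $t_p(\xi_p)^*t_q(\xi_q)t_r(\xi_r)=t_{p^{-1}qr}(\xi_p^*\xi_q\xi_r)$) and derives the membership $p_1^{-\eps}q_1\cdots p_n^{-1}q_n^{\eps'}r\in P$ as a by-product of the recursion rather than up front via the intersection formula; neither change affects the substance of the argument.
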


\begin{proof}
We start with the following three comments. 
First, suppose that $s=p^{-1}r$ for some $p, s \in P$.
Then by definition we have
\[
t_p(\xi_p)^*t_r(\xi_r)
=
t_p(\xi_p)^*t_{ps}(\xi_r)
=
t_{s}(\xi_p^*\xi_r).
\]
Next, suppose that $s=qr$ for some $q,s\in P$.
Then by definition we have
\[
t_q(\xi_q)t_r(\xi_r)
=
t_{qr}(\xi_q\xi_r)
=
t_s(\xi_q\xi_r).
\]
Finally, suppose that $s=p^{-1}qr$ for some $p,q,s\in P$.
Then we have 
\[
t_p(\xi_p)^*t_q(\xi_q)t_r(\xi_r)=
t_p(\xi_p)^*t_{qr}(\xi_q\xi_r)=
t_p(\xi_p)^*t_{ps}(\xi_q\xi_r)=
t_s(\xi_p^*\xi_q\xi_r).
\]

We proceed with the proof. 
Let $r$ be in $q_n^{-\eps'} p_n \dots q_1^{-1} p_1^{\eps} P$ for some $\eps, \eps' \in \{0,1\}$. 
We have that there is an $s_1 \in q_{n-1}^{-1} p_{n-1} \dots q_1^{-1} p_1^{\eps} P$ such that $s_1 = p_n^{-1} q_n^{\eps'} r$.
From the comments above we then have $\xi_{p_n}^* \xi_{q_n}^{\eps'} \xi_r \in X_{s_1}$ and
\[
t_{p_n}(\xi_{p_n})^* t_{q_n}(\xi_{q_n})^{\eps'} t_r(\xi_r)
=
t_{s_1}(\xi_{p_n}^* \xi_{q_n}^{\eps'} \xi_r).
\]
Since $s_1\in q_{n-1}^{-1} p_{n-1} \dots q_1^{-1} p_1^{\eps} P$  we have that there is an $s_2\in q_{n-2}^{-1} p_{n-2} \dots q_1^{-1} p_1^{\eps} P$ such that $s_2=p_{n-1}^{-1}q_{n-1}s_1$, and therefore 
\[
t_{p_{n-1}}(\xi_{p_{n-1}})^* t_{q_{n-1}}(\xi_{q_{n-1}}) t_{s_1}(\xi_{p_n}^*\xi_{q_n}^{\eps'}\xi_r)
=
t_{s_2}(\xi_{p_{n-1}}^*\xi_{q_{n-1}}\xi_{p_n}^* \xi_{q_n}^{\eps'} \xi_r).
\]
Continuing inductively for each $k=3,\dots,n-1$ we obtain an $s_k\in q_{n-k}^{-1} p_{n-k} \dots q_1^{-1} p_1^{\eps} P$ such that $s_k=p_{n-(k-1)}^{-1}q_{n-(k-1)}s_{k-1}$, and for each $k=1,\dots,n-1$ we have
\begin{align*}
&
t_{p_{n-(k-1)}}(\xi_{p_{n-(k-1)}})^* t_{q_{n-(k-1)}}(\xi_{q_{n-(k-1)}}) \cdots t_{p_n}(\xi_{p_n})^* t_{q_n}(\xi_{q_n}) ^{\eps'}t_r(\xi_r)= \\
&\hspace{5cm} =
t_{s_k}(\xi_{p_{n-(k-1)}}^* \xi_{q_{n-(k-1)}} \cdots \xi_{p_n}^* \xi_{q_n}^{\eps'} \xi_r)\\
&\hspace{5cm} =
t_{p_{n-(k-1)}^{-1} q_{n-(k-1)} \cdots p_n^{-1} q_n^{\eps'} r}(\xi_{p_{n-(k-1)}}^* \xi_{q_{n-(k-1)}} \cdots \xi_{p_n}^* \xi_{q_n}^{\eps'} \xi_r).
\end{align*}
Note that $r=q_n^{-\eps'} p_n \cdots q_1^{-1} p_1^{\eps} s_n$ for some $s_n \in P$, and thus $p_1^{-\eps} q_1 \cdots p_n^{-1}q_n^{\eps'} r= s_n \in P$.
Since $p_2^{-1} q_2 \cdots p_n^{-1} q_n^{\eps'} r = q_1^{-1} p_1^{\eps} s_n$, from the comments above we obtain
\begin{align*}
&(t_{p_1}(\xi_{p_1})^*)^{\eps} t_{q_1}(\xi_{q_1})t_{p_2}(\xi_{p_2})^* t_{q_2}(\xi_{q_2})\cdots t_{p_n}(\xi_{p_n})^* t_{q_n}(\xi_{q_n}) ^{\eps'}t_r(\xi_r) =\\
& \hspace{5cm} =
(t_{p_1}(\xi_{p_1})^*)^{\eps}t_{q_1}(\xi_{q_1})t_{p_2^{-1} q_2 \cdots p_n^{-1} q_n^{\eps'} r}(\xi_{p_2}^* \xi_{q_2} \cdots \xi_{p_n}^* \xi_{q_n}^{\eps'} \xi_r)\\
& \hspace{5cm} = 
(t_{p_1}(\xi_{p_1})^*)^{\eps}t_{q_1}(\xi_{q_1})t_{q_1^{-1}p_1^{\eps} s_n}(\xi_{p_2}^* \xi_{q_2} \cdots \xi_{p_n}^* \xi_{q_n}^{\eps'} \xi_r)\\
& \hspace{5cm} =
t_{s_n}((\xi_{p_1}^*)^{\eps} \xi_{q_1} \cdots \xi_{p_n}^* \xi_{q_n}^{\eps'} \xi_r),
\end{align*}
as required.
\end{proof}

For the Fock representation we have the following proposition, see also \cite[page 340]{Fow02}.

\begin{proposition}\label{P:fc mt}
Let $P$ be a unital subsemigroup of a discrete group $G$ and let $X$ be a product system over $P$. 
Let  $p_1, q_1, \dots, p_n, q_n \in P$ and $\eps, \eps' \in \{0,1\}$, and $\xi_{p_i}\in X_{p_i}$ and $\xi_{q_i}\in X_{q_i}$ for $i=1,\dots,n$. Then for each $r\in P$ and $\xi_r\in X_r$ we have
\[
\left( \la_{p_1}(\xi_{p_1})^* \right)^\eps \la_{q_1}(\xi_{q_1}) \cdots \la_{p_n}(\xi_{p_n})^* \la_{q_n}(\xi_{q_n})^{\eps'}\xi_r =
\begin{cases}
(\xi_{p_1}^*)^{\eps}\xi_{q_1}\cdots \xi_{p_n}^*\xi_{q_n}^{\eps'}\xi_r & \text{if } r\in q_n^{-\eps'} p_n \dots q_1^{-1} p_1^{\eps} P, \\
0 & \text{if } r \not\in q_n^{-\eps'} p_n \dots q_1^{-1} p_1^{\eps} P.
\end{cases}
\]
\end{proposition}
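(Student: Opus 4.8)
The plan is to treat separately the two cases in the statement, according to whether or not $r \in q_n^{-\eps'} p_n \dots q_1^{-1} p_1^\eps P$. Assume first that $r \in q_n^{-\eps'} p_n \dots q_1^{-1} p_1^\eps P$, and put $s := p_1^{-\eps} q_1 \cdots p_n^{-1} q_n^{\eps'} r$, which lies in $P$ by Proposition \ref{P:mult}, together with $\zeta := (\xi_{p_1}^*)^{\eps} \xi_{q_1} \cdots \xi_{p_n}^* \xi_{q_n}^{\eps'} \xi_r \in X_{s}$. Since the Fock representation $\la$ is a representation of $X$, Proposition \ref{P:mult} applied to $t = \la$ gives the identity of operators in $\L(\F X)$
\[
(\la_{p_1}(\xi_{p_1})^*)^{\eps} \la_{q_1}(\xi_{q_1}) \cdots \la_{p_n}(\xi_{p_n})^* \la_{q_n}(\xi_{q_n})^{\eps'} \la_r(\xi_r) = \la_{s}(\zeta).
\]
I would then evaluate both sides at an arbitrary $a \in A = X_e \subseteq \F X$; since $\la_r(\xi_r) a = \xi_r \cdot a$ and $\la_s(\zeta) a = \zeta \cdot a$ by the defining formula for the Fock representation (these being the right $A$-actions on $X_r$ and $X_s$), this becomes
\[
(\la_{p_1}(\xi_{p_1})^*)^{\eps} \la_{q_1}(\xi_{q_1}) \cdots \la_{p_n}(\xi_{p_n})^* \la_{q_n}(\xi_{q_n})^{\eps'} (\xi_r \cdot a) = \zeta \cdot a \quad \text{for all } a \in A.
\]
Taking $a = a_\mu$ along an approximate unit of $A$, one has $\xi_r \cdot a_\mu \to \xi_r$ and $\zeta \cdot a_\mu \to \zeta$ in $\F X$, and the operator on the left is bounded, hence continuous; passing to the limit yields the asserted formula $(\la_{p_1}(\xi_{p_1})^*)^{\eps} \la_{q_1}(\xi_{q_1}) \cdots \la_{q_n}(\xi_{q_n})^{\eps'} \xi_r = \zeta$.

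Now assume $r \notin q_n^{-\eps'} p_n \dots q_1^{-1} p_1^\eps P$; here I would compute directly on $\F X$, applying the operators of the word to $\xi_r$ one at a time from the right and using only the two defining formulas $\la_p(\xi_p) \eta_t = \xi_p \cdot \eta_t$ and $\la_p(\xi_p)^* \eta_t = \xi_p^* \cdot \eta_t$ for $t \in pP$ (and $=0$ for $t \notin pP$), valid for $\eta_t \in X_t$. A creation operator moves the running component from $X_t$ into $X_{pt}$ unconditionally, whereas an adjoint $\la_{p_k}(\xi_{p_k})^*$ moves it from $X_t$ into $X_{p_k^{-1}t}$ when $t \in p_k P$ and annihilates it otherwise. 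So, as long as nothing has been annihilated, the fibre index after successive operators runs through $r$, then $q_n^{\eps'} r$, then $p_n^{-1} q_n^{\eps'} r$, then $q_{n-1} p_n^{-1} q_n^{\eps'} r$, and so on; and the requirement that every adjoint in the word succeed unfolds --- by the iterated preimage description of the constructible ideals recorded in Section \ref{S:pre}, together with $q^{-1} P = P$ to handle the vanishing exponents --- into precisely the condition $r \in q_n^{-\eps'} p_n \dots q_1^{-1} p_1^\eps P$. Under the present hypothesis this condition fails, so some adjoint operator of the word is applied to a vector lying in some $X_t$ with $t \notin p_k P$; that operator sends it to $0$, and hence so does the whole word.

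The step I expect to be the main obstacle is the semigroup bookkeeping in the second case: lining up ``the $k$-th adjoint of the word succeeds at its step'' with the recursive unfolding of membership in $q_n^{-\eps'} p_n \dots q_1^{-1} p_1^\eps P$. The exponents $\eps, \eps' \in \{0,1\}$ make this delicate, because a vanishing exponent collapses the corresponding factor (for instance $p_1^0 P = P$), and one must keep the computation inside $P$ although the cancellations that identify the fibre indices take place in $G$. A fully self-contained alternative, bypassing Proposition \ref{P:mult}, is induction on $n$: the base case $n = 1$ is a short case analysis on the four choices of $(\eps, \eps')$; the inductive step strips off the outer block $(\la_{p_1}(\xi_{p_1})^*)^{\eps} \la_{q_1}(\xi_{q_1})$, applies the inductive hypothesis to the length-$(n-1)$ word $\la_{p_2}(\xi_{p_2})^* \la_{q_2}(\xi_{q_2}) \cdots \la_{p_n}(\xi_{p_n})^* \la_{q_n}(\xi_{q_n})^{\eps'}$ (of the same shape, with leading exponent $1$ and trailing exponent $\eps'$), and applies the two defining formulas once more, redistributing the same bookkeeping across the induction.
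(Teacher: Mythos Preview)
Your proposal is correct and follows essentially the same route as the paper: for $r$ in the constructible ideal you invoke Proposition \ref{P:mult} and then pass from the operator identity to the vector identity via $[X_r A] = X_r$ (you phrase this with approximate units, the paper just cites the density $[X_r A]=X_r$), and for $r$ outside the ideal you track the fibre index through the successive operators and argue that if no adjoint annihilates then $r$ must lie in the ideal, which the paper writes as a proof by contradiction and you as its contrapositive. The bookkeeping concern you flag about the exponents $\eps,\eps'$ is real but minor, and the paper handles it exactly as you describe.
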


\begin{proof}
If $r \in q_n^{-\eps'} p_n \dots q_1^{-1} p_1^{\eps} P$ and $a\in A:=X_e$, then by Proposition \ref{P:mult} we have
\begin{align*}
&\left(\la_{p_1}(\xi_{p_1})^*\right)^{\eps} \la_{q_1}(\xi_{q_1}) \cdots \la_{p_n}(\xi_{p_n})^* \la_{q_n} (\xi_{q_n})^{\eps'}\xi_r a=\\
& \hspace{4cm} =
\left(\la_{p_1}(\xi_{p_1})^*\right)^{\eps} \la_{q_1}(\xi_{q_1}) \cdots \la_{p_n}(\xi_{p_n})^* \la_{q_n} (\xi_{q_n})^{\eps'} \la_r(\xi_r) a\\
& \hspace{4cm} =
\la_{p_1^{-\eps} q_1\cdots p_n^{-1}q_n^{\eps'} r}((\xi_{p_1}^*)^{\eps} \xi_{q_1} \cdots \xi_{p_n}^* \xi_{q_n}^{\eps'} \xi_r)a\\
& \hspace{4cm} =
(\xi_{p_1}^*)^{\eps} \xi_{q_1} \cdots \xi_{p_n}^* \xi_{q_n}^{\eps'} \xi_r a.
\end{align*}
Since $[X_r A] = X_r$ we get 
\[
(\la_{p_1}(\xi_{p_1})^*)^{\eps} \la_{q_1}(\xi_{q_1}) \cdots \la_{p_n}(\xi_{p_n})^* \la_{q_n} (\xi_{q_n})^{\eps'}\xi_r
=
(\xi_{p_1}^*)^{\eps} \xi_{q_1} \cdots \xi_{p_n}^* \xi_{q_n}^{\eps'} \xi_r.
\]

Next let $r\not\in q_n^{-\eps'} p_n \dots q_1^{-1} p_1^{\eps} P$, and suppose towards a contradiction that
\[
(\la_{p_1}(\xi_{p_1})^*)^{\eps} \la_{q_1}(\xi_{q_1}) \cdots \la_{p_n}(\xi_{p_n})^* \la_{q_n}(\xi_{q_n})^{\eps'} \xi_r\neq 0.
\]
In particular we have
\[
\la_{p_n}(\xi_{p_n})^* \xi_{q_n}^{\eps'}\xi_r
=
\la_{p_n}(\xi_{p_n})^* \la_{q_n}(\xi_{q_n})^{\eps'} \xi_r
\neq 
0.
\]
Hence $p_nr_1=q_n^{\eps'}r$ for some $r_1\in P$ and 
\[
0 \neq \la_{p_n}(\xi_{p_n})^* \xi_{q_n}^{\eps'}\xi_r=\xi_{p_n}^*\xi_{q_n}^{\eps'}\xi_r\in X_{r_1}.
\]
Inductively, for $k=1,\dots, n$ we obtain $r_k\in P$ such that 
\begin{align*}
p_n r_1 =q_n^{\eps'}r, \,
p_1^{\eps}r_n = q_1r_{n-1},
\text{ and }
p_{n-(k-1)} r_k = q_{n-(k-1)} r_{k-1} 
\foral k=2, \dots, n-1,
\end{align*}
and therefore $r=q_n^{-\eps'}p_n\cdots q_{n-(k-1)}^{-1}p_{n-(k-1)}r_k$ for each $k=2,\dots,n$. Thus,
\[
r
\in 
P \cap (q_n^{-\eps'} \cdot p_n \cdot P) \cap (q_n^{-\eps'} \cdot p_n \cdot q_{n-1}^{-1} \cdot p_{n-1} \cdot P) \cap \cdots \cap (q_n^{-\eps'} \cdot p_n \cdots q_1^{-1} \cdot p_1^{\eps} \cdot P)
=
q_n^{-\eps'} p_n \dots q_1^{-1} p_1^{\eps} P,
\]
which is a contradiction.
\end{proof}

A representation $t = \{t_p\}_{p \in P}$ of $X$ will be called \emph{equivariant} if there exists a $*$-homomorphism $\de$ of $\ca(t)$ such that
\[
\de \colon \ca(t) \to \ca(t) \otimes \ca_{\max}(G) ; t_p(\xi_p) \mapsto t_p(\xi_p) \otimes u_{p}.
\]
It follows that $\de$ is injective with a left inverse given by the map $\id \otimes \chi$.
Moreover, it satisfies the coaction identity and hence $\ca(t)$ admits a coaction by $G$.
For simplicity, we will say that \emph{$t$ admits a coaction by $G$} if such a $\de$ exists. 
In this case the $g$-fiber $[\ca(t)]_g$, for $g \in G$, is the closed linear span of the elements
\begin{align*}
\left( t_{p_1}(X_{p_1})^* \right)^{\eps} t_{q_1}(X_{q_1}) \cdots t_{p_n}(X_{p_n})^*t_{q_n} (X_{q_n})^{\eps'}
\text{ such that } 
p_1^{-\eps} q_1 \cdots p_n^{-1} q_n^{\eps'} = g,
\end{align*}
for $\eps, \eps' \in \{0,1\}$ and $n \in \bN$. 
A proof can be found in \cite[Lemma 2.2]{Seh18} for non-degenerate product systems, but similar arguments give the conclusion in the general case.

Let $t$ be a representation of $X$.
For $\Bx \in \J$ we define the \emph{$\bo{K}$-core on $\Bx$} of $\ca(t)$ to be the closed linear span of the spaces
\begin{align*}
\left( t_{p_1}(X_{p_1})^* \right)^{\eps} t_{q_1}(X_{q_1}) \cdots t_{p_n}(X_{p_n})^* 
t_{q_n} (X_{q_n})^{\eps'}
\end{align*}
for any $p_1, q_1, \dots, p_n, q_n \in P$ and $\eps, \eps' \in \{0,1\}$ that satisfy
\begin{align*}
p_1^{-\eps} q_1 \cdots p_n^{-1} q_n^{\eps'} = e
\text{ and }
q_n^{-\eps'} p_n \dots q_1^{-1} p_1^{\eps} P = \Bx.
\end{align*}
We write $\bo{K}_{\Bx, t_\ast}$ for this closed linear space.
We do not claim that $\bo{K}_{\mt, t_{\ast}} = (0)$.
Note that, if $p_1^{-\eps} q_1 \cdots p_n^{-1} q_n^{\eps'} = e$, then
\[
q_n^{-\eps'} p_n \dots q_1^{-1} p_1^{\eps} P 
=
p_1^{-\eps} q_1 \dots p_n^{-1} q_n^{\eps'} P.
\]
Indeed, since $p_1^{-\eps} q_1 \cdots p_n^{-1} q_n^{\eps'} = e$ we have
\begin{align*}
q_n^{-\eps'} p_n \dots q_1^{-1} p_1^{\eps} P 
& =
P \cap (q_n^{-\eps'} \cdot p_n \cdot P) 
\cap \cdots \cap (q_n^{-\eps'} \cdot p_n \cdots q_1^{-1} \cdot p_1^{\eps} \cdot P) \\
& =
(p_1^{-\eps}\cdot q_1 \cdots p_n^{-1} \cdot q_n^{\eps'}\cdot P)\cap (p_1^{-\eps}\cdot q_1\cdots p_{n-1}^{-1}\cdot q_{n-1}\cdot P)\cap \cdots 
\cap P \\
& =
p_1^{-\eps} q_1 \dots p_n^{-1} q_n^{\eps'} P.
\end{align*}
We conclude that each $\bo{K}_{\Bx, t_\ast}$ is a selfadjoint space.
Moreover, if we have
\[
\Bx = q_n^{-\eps_1'} p_n \dots q_1^{-1} p_1^{\eps_1} P \qand \By = s_m^{-\eps_2'} r_m \dots s_1^{-1} r_1^{\eps_2} P,
\]
with $p_1^{-\eps_1} q_1 \cdots p_n^{-1} q_n^{\eps_1'} = e$ and $r_1^{-\eps_2} s_1 \cdots r_m^{-1} s_m^{\eps_2'} = e$, then
\begin{align*}
\Bx \cap \By 
& =
\left(P \cap (q_n^{-\eps_1'} \cdot p_n \cdot P) 
\cap \cdots \cap (q_n^{-\eps_1'} \cdot p_n \cdots q_1^{-1} \cdot p_1^{\eps_1} \cdot P)\right)\bigcap
\\
& \hspace{3cm}
\bigcap \left( P \cap (s_m^{-\eps_2'} \cdot r_m \cdot P) 
\cap \cdots \cap (s_m^{-\eps_2'} \cdot r_m \cdots s_1^{-1} \cdot r_1^{\eps_2} \cdot P)\right)\\
&=
P \cap (q_n^{-\eps_1'} \cdot p_n \cdot P) \cap \cdots \cap (q_n^{-\eps_1'} \cdot p_n \cdots q_1^{-1} \cdot p_1^{\eps_1} \cdot P)
\bigcap \\
& \hspace{3cm}
\bigcap (q_n^{-\eps_1'} \cdot p_n \cdots q_1^{-1} \cdot p_1^{\eps_1} \cdot s_m^{-\eps_2'}\cdot r_m \cdot P) \cap \cdots \\
& \hspace{4cm} \cdots
\cap (q_n^{-\eps_1'} \cdot p_n \cdots q_1^{-1} \cdot p_1^{\eps_1} \cdot s_m^{-\eps_2'}\cdot r_m\cdots s_1^{-1}\cdot r_1^{\eps_2}\cdot P)\\
&=
q_n^{-\eps_1'} p_n \dots q_1^{-1} p_1^{\eps_1} s_m^{-\eps_2'} r_m \dots s_1^{-1} r_1^{\eps_2} P,
\end{align*}
with $r_1^{-\eps_2} s_1 \cdots r_m^{-1} s_m^{\eps_2'} p_1^{-\eps_1} q_1 \cdots p_n^{-1} q_n^{\eps_1'} = e$.
Hence we obtain
\[
\bo{K}_{\Bx, t_\ast} \cdot \bo{K}_{\By, t_\ast} \subseteq \bo{K}_{\Bx \cap \By, t_\ast}.
\]
From this we derive that every $\bo{K}_{\Bx, t_\ast}$ is an algebra (as $\Bx \cap \Bx = \Bx$), and thus a C*-subalgebra of $\ca(t)$.
More generally, for a finite $\cap$-closed $\F \subseteq \J$ we define the \emph{$\bo{B}$-core on $\F$} by
\[
\bo{B}_{\F, t_\ast} := \sum_{\Bx \in \F} \bo{K}_{\Bx, t_\ast}.
\]
By the discussion above, it follows that if $\F$ is $\cap$-closed and $\Bx \in \J$ is such that $\Bx \cap \By \in \F$ for every $\By \in \F$, then $\bo{B}_{\F, t_\ast}$ is an ideal in $\bo{B}_{\F \cup \{\Bx\}, t_\ast}$.
An induction argument gives that $\bo{B}_{\F, t_\ast}$ is a C*-subalgebra of $\ca(t)$ when $\F \subseteq \J$ is finite and $\cap$-closed.
In particular we have that, if $t$ is equivariant, then the fixed point algebra $[\ca(t)]_e$ is the inductive limit of the C*-subalgebras $\bo{B}_{\F, t_\ast}$ over $\cap$-closed and finite $\F \subseteq \J$ (with respect to inclusion of sets).

\begin{proposition} \label{P:taut}
Let $P$ be a unital subsemigroup of a discrete group $G$ and let $X$ be a product system over $P$.
Let $t$ be a representation of $X$ and let $\hat{t}$ be the representation of $X$ such that $\T(X) = \ca(\hat{t})$.
For $\mt \neq \Bx \in \J$ and $r \in \Bx$ we have
\[
t_\ast(b_{\Bx}) t_r(\xi_r) = t_r( \la_{\ast}(b_{\Bx}) \xi_r)
\foral b_{\Bx} \in \bo{K}_{\Bx, \hat{t}_\ast}, \xi_r \in X_r.
\]
\end{proposition}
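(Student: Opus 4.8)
The plan is to verify the identity first on the spanning elements of the $\bo{K}$-core and then to extend it by linearity and continuity. By definition $\bo{K}_{\Bx, \hat{t}_\ast}$ is the closed linear span of products
\[
b_\Bx = \left( \hat{t}_{p_1}(\xi_{p_1})^* \right)^\eps \hat{t}_{q_1}(\xi_{q_1}) \cdots \hat{t}_{p_n}(\xi_{p_n})^* \hat{t}_{q_n}(\xi_{q_n})^{\eps'},
\]
with $\eps, \eps' \in \{0,1\}$, $\xi_{p_i} \in X_{p_i}$, $\xi_{q_i} \in X_{q_i}$, $p_1^{-\eps} q_1 \cdots p_n^{-1} q_n^{\eps'} = e$ and $q_n^{-\eps'} p_n \dots q_1^{-1} p_1^\eps P = \Bx$, so it is enough to treat such elements (over all admissible tuples).

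First I would dispose of the continuity point, which is the only part requiring a little care. The map $b \mapsto t_\ast(b) t_r(\xi_r)$ on $\T(X)$ is linear and bounded since $t_\ast$ is a $*$-homomorphism. For the other side, $\la$ promotes to a contractive $*$-homomorphism $\la_\ast \colon \T(X) \to \L(\F X)$ and $t_r$ is bounded on $X_r$; the subtlety is that $\la_\ast(b)\xi_r$ should lie in the closed submodule $X_r \subseteq \F X$ for every $b$ in the core. I would check this from Proposition \ref{P:fc mt}: since $r \in \Bx$, that proposition gives $\la_\ast(b_\Bx)\xi_r \in X_r$ for each spanning element $b_\Bx$, and then an arbitrary $b \in \bo{K}_{\Bx, \hat{t}_\ast}$ satisfies $\la_\ast(b)\xi_r \in X_r$ by passing to norm-limits in $\F X$. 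Hence $b \mapsto t_r(\la_\ast(b)\xi_r)$ is linear and bounded on $\bo{K}_{\Bx, \hat{t}_\ast}$, and it suffices to compare the two sides on the $b_\Bx$'s.

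Then I would evaluate both sides on a fixed $b_\Bx$. For the left-hand side, since $r \in \Bx = q_n^{-\eps'} p_n \dots q_1^{-1} p_1^\eps P$, Proposition \ref{P:mult} applies, and because $p_1^{-\eps} q_1 \cdots p_n^{-1} q_n^{\eps'} = e$ the resulting index collapses to $r$, so
\[
t_\ast(b_\Bx) t_r(\xi_r) = t_r\!\left( (\xi_{p_1}^*)^\eps \xi_{q_1} \cdots \xi_{p_n}^* \xi_{q_n}^{\eps'} \xi_r \right).
\]
For the right-hand side, Proposition \ref{P:fc mt} (using $r \in \Bx$ again) gives $\la_\ast(b_\Bx)\xi_r = (\xi_{p_1}^*)^\eps \xi_{q_1} \cdots \xi_{p_n}^* \xi_{q_n}^{\eps'} \xi_r$ as an element of $X_r$, so applying $t_r$ produces the same expression, and the two sides agree. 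Apart from the bookkeeping over whether $\la_\ast(b)\xi_r$ stays inside $X_r$, which is where I expect the only (minor) friction, this is a direct application of Propositions \ref{P:mult} and \ref{P:fc mt}.
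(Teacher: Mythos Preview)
Your proof is correct and follows essentially the same route as the paper: verify the identity on the spanning monomials of $\bo{K}_{\Bx,\hat{t}_\ast}$ via Propositions~\ref{P:mult} and~\ref{P:fc mt}, then pass to linear combinations and norm-limits. The only difference is that you spell out why $\la_\ast(b)\xi_r$ remains in $X_r$ so that the right-hand side is well-defined, a point the paper leaves implicit.
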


\begin{proof}
First consider an element of the form
\[
b_{\bo{x}} 
:= 
\left({\hat{t}_{p_1}(\xi_{p_1})}^*\right)^{\eps} {\hat{t}_{q_1}(\xi_{q_1})}\cdots{\hat{t}_{p_n}(\xi_{p_n})}^* \hat{t}_{q_n}(\xi_{q_n})^{\eps'}
\in 
\bo{K}_{\bo{x},\hat{t}_\ast},
\]
such that $\Bx = q_n^{-\eps'} p_n \dots q_1^{-1} p_1^{\eps} P$ and $p_1^{-\eps}q_1 \cdots p_n^{-1}q_n^{\eps'} = e$ for some $p_1, q_1, \dots, p_n, q_n \in P$ and $\eps, \eps' \in \{0,1\}$.
Let $r \in \Bx$.
By Proposition \ref{P:mult} and Proposition \ref{P:fc mt} we have
\begin{align*}
t_\ast(b_{\Bx}) t_r(\xi_r)
& =
t_\ast \left( \left({\hat{t}_{p_1}(\xi_{p_1})}^*\right)^{\eps} {\hat{t}_{q_1}(\xi_{q_1})}\cdots{\hat{t}_{p_n}(\xi_{p_n})}^* \hat{t}_{q_n}(\xi_{q_n})^{\eps'} \right) t_r(\xi_r) \\
& =
\left( t_{p_1}(\xi_{p_1})^* \right)^{\eps} t_{q_1}(\xi_{q_1}) \cdots t_{p_n}(\xi_{p_n})^* t_{q_n} (\xi_{q_n})^{\eps'} t_r(\xi_r)\\
& =
t_r\left((\xi_{p_1}^*)^{\eps} \xi_{q_1} \cdots \xi_{p_n}^* \xi_{q_n}^{\eps'} \xi_r\right)
=
t_r( \la_\ast(b_{\bo{x}})\xi_r).
\end{align*}
Taking finite linear combinations and their norm-limits completes the proof.
\end{proof}

The following proposition is an immediate consequence of Proposition \ref{P:fc mt}.

\begin{proposition}\label{P:fc mt 2}
Let $P$ be a unital subsemigroup of a discrete group $G$ and let $X$ be a product system over $P$.
Let $\hat{t}$ be the representation of $X$ such that $\T(X) = \ca(\hat{t})$. Then for every $\Bx \in \J$ and $r \not\in \Bx$ we have
\[
\la_{\ast}(b_{\bo{x}})\xi_r=0
\foral b_{\bo{x}}\in \bo{K}_{\bo{x},\hat{t}_\ast}, \xi_r\in X_r.
\]
\end{proposition}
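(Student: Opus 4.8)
The plan is to reduce to the spanning elements of $\bo{K}_{\bo{x},\hat{t}_\ast}$ and then invoke Proposition \ref{P:fc mt} essentially verbatim. First I would fix $\Bx \in \J$, $r \notin \Bx$ and $\xi_r \in X_r$, and consider a spanning element
\[
b_{\bo{x}} := \left( \hat{t}_{p_1}(\xi_{p_1})^* \right)^{\eps} \hat{t}_{q_1}(\xi_{q_1}) \cdots \hat{t}_{p_n}(\xi_{p_n})^* \hat{t}_{q_n}(\xi_{q_n})^{\eps'}
\]
of $\bo{K}_{\bo{x},\hat{t}_\ast}$, where by definition $p_1, q_1, \dots, p_n, q_n \in P$ and $\eps, \eps' \in \{0,1\}$ satisfy $p_1^{-\eps} q_1 \cdots p_n^{-1} q_n^{\eps'} = e$ and $q_n^{-\eps'} p_n \dots q_1^{-1} p_1^{\eps} P = \Bx$. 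Since $\la_\ast$ is the $*$-representation of $\T(X)$ induced by the Fock representation $\la$ and $\hat{t}$ is the representation with $\T(X) = \ca(\hat{t})$, we have $\la_\ast(\hat{t}_p(\xi_p)) = \la_p(\xi_p)$ for all $p \in P$ and $\xi_p \in X_p$, whence
\[
\la_\ast(b_{\bo{x}}) = \left( \la_{p_1}(\xi_{p_1})^* \right)^{\eps} \la_{q_1}(\xi_{q_1}) \cdots \la_{p_n}(\xi_{p_n})^* \la_{q_n}(\xi_{q_n})^{\eps'}.
\]

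Next I would apply Proposition \ref{P:fc mt}: since $r \notin q_n^{-\eps'} p_n \dots q_1^{-1} p_1^{\eps} P = \Bx$, the second branch of that proposition gives $\la_\ast(b_{\bo{x}})\xi_r = 0$. The key point is that every spanning element of $\bo{K}_{\bo{x},\hat{t}_\ast}$ has a decomposition whose associated constructible ideal is the \emph{same} $\Bx$, so the hypothesis $r \notin \Bx$ applies uniformly; by linearity the vanishing extends to all finite linear combinations of spanning elements. Finally, since $\la_\ast$ is contractive (being a $*$-homomorphism) and the $A$-module action on $\F X$ is contractive, passing to norm-limits yields $\la_\ast(b_{\bo{x}})\xi_r = 0$ for every $b_{\bo{x}} \in \bo{K}_{\bo{x},\hat{t}_\ast}$ and every $\xi_r \in X_r$, as required.

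I do not expect a real obstacle here: the statement is, as indicated in the excerpt, an immediate consequence of Proposition \ref{P:fc mt}, and the only mild care needed is to record that the uniform hypothesis $r \notin \Bx$ is preserved across all admissible decompositions of a fixed $\bo{x} \in \J$, and that the vanishing on a single vector $\xi_r$ passes through the closed linear span by continuity of $\la_\ast$. (The degenerate case $\Bx = \mt$ is covered automatically, since then $r \notin \Bx$ for every $r \in P$, so $\la_\ast(b_{\mt}) = 0$ on all of $\F X$.)
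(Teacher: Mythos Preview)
Your proposal is correct and follows essentially the same route as the paper: reduce to a spanning element of $\bo{K}_{\bo{x},\hat{t}_\ast}$, rewrite $\la_\ast(b_{\bo{x}})$ in terms of the Fock operators, apply the second branch of Proposition~\ref{P:fc mt}, and then pass to linear combinations and norm-limits. Your added remarks about the uniform applicability of $r\notin\Bx$ across decompositions and the $\Bx=\mt$ case are fine but not strictly needed for the argument.
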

\begin{proof}
We consider an element of the form
\[
b_{\bo{x}} 
:= 
\left({\hat{t}_{p_1}(\xi_{p_1})}^*\right)^{\eps} {\hat{t}_{q_1}(\xi_{q_1})}\cdots{\hat{t}_{p_n}(\xi_{p_n})}^* \hat{t}_{q_n}(\xi_{q_n})^{\eps'}
\in 
\bo{K}_{\bo{x},\hat{t}_\ast},
\]
such that $\Bx = q_n^{-\eps'} p_n \dots q_1^{-1} p_1^{\eps} P$ and $p_1^{-\eps}q_1 \cdots p_n^{-1}q_n^{\eps'} = e$ for some $p_1, q_1, \dots, p_n, q_n \in P$ and $\eps, \eps' \in \{0,1\}$.
Let $\xi_r\in X_r$. 
By Proposition \ref{P:fc mt} we obtain
\[
\la_\ast(b_{\bo{x}})\xi_r
=
\left( \la_{p_1}(\xi_{p_1})^* \right)^\eps \la_{q_1}(\xi_{q_1}) \cdots \la_{p_n}(\xi_{p_n})^* \la_{q_n}(\xi_{q_n})^{\eps'}\xi_r
=
0.
\]
Taking finite linear combinations and their norm-limits completes the proof.
\end{proof}

By the universal property, the Toeplitz C*-algebra $\T(X)$ admits a coaction by $G$, see for example \cite[Lemma 2.2]{Seh18}.
We let 
\[
\P\S X:=\Big\{[\T(X)]_g\Big\}_{g\in G}
\]
be the induced Fell bundle. 
Moreover, the embedding $X \hookrightarrow \T(X)$ is injective, since the Fock representation $\la$ is injective.
Therefore for each $p \in P$ we have an embedding $X_p \hookrightarrow [\P\S X]_p$.

\begin{proposition} \label{P:max psx}
Let $P$ be a unital subsemigroup of a discrete group $G$ and let $X$ be a product system over $P$. 
Then the family $\{X_p \hookrightarrow [\P\S X]_p\}_{p \in P}$ of embeddings lifts to a $*$-isomorphism 
\[
\T(X) \simeq \ca_{\max}(\P\S X).
\]
\end{proposition}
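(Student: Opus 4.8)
The plan is to build canonical $*$-homomorphisms $\Phi\colon \ca_{\max}(\P\S X)\to\T(X)$ and $\Psi\colon\T(X)\to\ca_{\max}(\P\S X)$ and to check on generators that they are mutually inverse.

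For $\Phi$, I would observe that, since $\T(X)$ carries a coaction by $G$ whose spectral subspaces are precisely the fibres $[\T(X)]_g=[\P\S X]_g$, the family of inclusions $\{[\P\S X]_g\hookrightarrow\T(X)\}_{g\in G}$ is an equivariant representation of the Fell bundle $\P\S X$ which is (trivially) injective on the unit fibre $[\P\S X]_e=[\T(X)]_e$. By the discussion preceding Proposition~\ref{P:eq dgrm} (that is, \cite[Theorem 3.3]{Exe97}), this representation promotes to an equivariant canonical $*$-epimorphism $\Phi\colon\ca_{\max}(\P\S X)\to\T(X)$ with $\Phi(\hat j_g(b_g))=b_g$ for every $b_g\in[\P\S X]_g$, where $\hat j$ denotes the canonical embedding of $\P\S X$.

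For $\Psi$, I would exhibit a Toeplitz representation of $X$ inside $\ca_{\max}(\P\S X)$ by composing the embeddings $X_p\hookrightarrow[\P\S X]_p$ with $\hat j_p$, setting $t_p:=\hat j_p|_{X_p}$. The content of this step is to verify the Toeplitz relations: relation (i) holds because $\hat j_e$ restricts to a $*$-homomorphism on $A=X_e\subseteq[\P\S X]_e$; for (ii) and (iii) one uses that the multiplication and involution of the Fell bundle $\P\S X$ are inherited from the product and adjoint of $\T(X)$, so that the fibre product of $\xi_p\in X_p$ and $\xi_q\in X_q$ is $\hat t_{pq}(\xi_p\xi_q)\in X_{pq}$ and $\hat j_p(\xi_p)^*=\hat j_{p^{-1}}(\xi_p^*)$ with $\xi_p^*\in[\P\S X]_{p^{-1}}$ the adjoint of $\hat t_p(\xi_p)$; combining with the Fell bundle representation identities for $\hat j$ and with the Toeplitz relations among the $\hat t_p$ then gives $t_p(\xi_p)t_q(\xi_q)=t_{pq}(\xi_p\xi_q)$ and $t_p(\xi_p)^*t_{pq}(\xi_{pq})=t_q(\xi_p^*\xi_{pq})$. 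By the universal property of $\T(X)$ this yields a $*$-homomorphism $\Psi\colon\T(X)\to\ca_{\max}(\P\S X)$ with $\Psi(\hat t_p(\xi_p))=\hat j_p(\xi_p)$.

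It remains to see that $\Phi$ and $\Psi$ are mutually inverse. On the generators $\hat t_p(\xi_p)$ of $\T(X)$ we get $\Phi(\Psi(\hat t_p(\xi_p)))=\Phi(\hat j_p(\xi_p))=\hat t_p(\xi_p)$, so $\Phi\circ\Psi=\id_{\T(X)}$. Conversely, recall that each fibre $[\P\S X]_g=[\ca_{\max}(\P\S X)]_g$ is spanned by words $(\hat t_{p_1}(\xi_{p_1})^*)^{\eps}\hat t_{q_1}(\xi_{q_1})\cdots\hat t_{p_n}(\xi_{p_n})^*\hat t_{q_n}(\xi_{q_n})^{\eps'}$ with $p_1^{-\eps}q_1\cdots p_n^{-1}q_n^{\eps'}=g$, and that such a word, viewed in $\T(X)$, is exactly $b_g$; since $\Psi\circ\Phi$ is linear and continuous and the $\hat j_g(b_g)$ span $\ca_{\max}(\P\S X)$, it suffices to note that $\Psi(\Phi(\hat j_g(b_g)))=\Psi(b_g)$ equals the corresponding word in the $t_{p_i}=\hat j_{p_i}|_{X_{p_i}}$, which by the Fell bundle representation identities for $\hat j$ collapses back to $\hat j_g(b_g)$. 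Hence $\Psi\circ\Phi=\id_{\ca_{\max}(\P\S X)}$, and $\Psi$ is the desired canonical $*$-isomorphism. I expect the middle paragraph to be the only real obstacle: one has to keep careful track of how the product-system operations sit inside the Fell bundle operations on $\P\S X$ in order to confirm that $\{\hat j_p|_{X_p}\}_{p\in P}$ genuinely satisfies the Toeplitz relations; the rest is bookkeeping on generators.
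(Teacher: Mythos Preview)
Your proof is correct and follows essentially the same approach as the paper: both construct the map $\T(X)\to\ca_{\max}(\P\S X)$ by exhibiting $\{\hat j_p|_{X_p}\}_{p\in P}$ as a Toeplitz representation of $X$, and both obtain the inverse $\ca_{\max}(\P\S X)\to\T(X)$ from the fact that the inclusions $[\P\S X]_g\hookrightarrow\T(X)$ form a Fell bundle representation (the paper packages this step as a citation to \cite[Theorem~19.5]{Exe17}, whereas you invoke the universal property directly via \cite[Theorem~3.3]{Exe97}). Your verification of $\Psi\circ\Phi=\id$ on words is more explicit than the paper's, which simply observes the two maps are inverse on generators; either way the argument is the same.
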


\begin{proof}
Let $\hat{t}$ be the representation of $X$ such that $\ca(\hat{t})=\T(X)$ and let $\hat{j}\colon \P\S X\to \ca_{\max}(\P\S X)$ be the induced canonical embedding of the Fell bundle. 
Let $t=\{ t_p\}_{p\in P}$ be the family of maps defined by $t_p\colon X_p \to \ca_{\max}(\P\S X)$ such that $t_p(\xi_p)=\hat{j}_p(\hat{t}_p(\xi_p))$ for each $\xi_p\in X_p$ and $p\in P$. 
Then $t$ is a representation of $X$.
Indeed, for the first axiom, since $\hat{t}_e$ and $\hat{j}_e$ are $*$-representations we obtain that $t_e$ is a $*$-representation of $A:=X_e$. 
For the second axiom, let $\xi_p\in X_p$ and $\xi_q\in X_q$, then 
\[
t_p(\xi_p)t_q(\xi_q)=\hat{j}_p(\hat{t}_p(\xi_p))\hat{j}_q(\hat{t}_q(\xi_q))=\hat{j}_{pq}(\hat{t}_{pq}(\xi_p\xi_q))=t_{pq}(\xi_p\xi_q).
\]
For the third axiom, let $\xi_p\in X_p$ and $\xi_{pq}\in X_{pq}$, then 
\[
t_p(\xi_p)^*t_{pq}(\xi_{pq})=\hat{j}_p(\hat{t}_p(\xi_p))^*\hat{j}_{pq}(\hat{t}_{pq}(\xi_{pq}))=\hat{j}_{p^{-1}}(\hat{t}_p(\xi_p)^*)\hat{j}_{pq}(\hat{t}_{pq}(\xi_{pq}))=\hat{j}_q(\hat{t}_q(\xi_p^*\xi_{pq}))=
t_q(\xi_p^*\xi_{pq}),
\]
as required.

Therefore, there is an induced $*$-epimorphism
\[
\T(X) \to\ca(t)=\ca_{\max}(\P\S X); \hat{t}_p(\xi_p) \mapsto t_p(\xi_p)=\hat{j}_p(\hat{t}_p(\xi_{p})).
\]
On the other hand, by \cite[Theorem 19.5]{Exe17} we obtain a $*$-epimorphism
\[
\ca_{\max}(\P\S X) \to \T(X); \hat{j}_p(\hat{t}_p(\xi_p)) \mapsto \hat{t}_p(\xi_{p})
\]
in the other direction, and the proof is complete.
\end{proof}

As noted in \cite[Proposition 4.1]{DKKLL20}, the representation $\la$ admits a reduced coaction by using the unitary
\[
U \colon \F X \otimes \ell^2(G) \to \F X \otimes \ell^2(G); U(\xi_r \otimes \de_g) = \xi_r \otimes \de_{rg}.
\]
Indeed, for this $U$ we have
\[
U \cdot (\la_p(\xi_p) \otimes I) = (\la_p(\xi_p) \otimes \la_p) \cdot U
\foral
p \in P,
\]
and therefore the map
\[
\T_\la(X) \stackrel{\simeq}{\longrightarrow}
\ca(\la_p(\xi_p) \otimes I \mid p \in P) \stackrel{\ad_{U}}{\longrightarrow}
\ca(\la_p(\xi_p) \otimes \la_p \mid p \in P)
\]
defines a reduced coaction on $\T_\la(X)$.
Consequently, it lifts to a normal coaction $\de$ on $\T_\la(X)$.

\subsection{Fowler's product systems} \label{Ss:Fow}

In \cite[Definition 2.1]{Fow02} Fowler defines a product system $X$ over $P$ with coefficients in a C*-algebra $A$ as a family  $\{X_p\}_{p \in P}$ of C*-correspondences over $A$ together with a family of maps $\{u_{p,q}\colon X_p\otimes_A X_q\to X_{pq}\}_{p,q \in P}$ such that:
\begin{enumerate}
\item the space $X_e$ is the C*-correspondence $A$ over $A$ where the left and right actions of $A$ is multiplication on $A$;
\item if $p, q \neq e$, then $u_{p,q}$ is a unitary; 
\item if $p=e$, then $u_{e,q}\colon A\otimes_A X_q\to X_q$ is given by the left action of $A$ on $X_q$ for $q \in P$;
\item if $q=e$, then $u_{p,e}\colon X_p\otimes_A A\to X_p$ is given by the right action of $A$ on $X_p$ for $p \in P$;
\item the maps $\{u_{p,q}\}_{p,q \in P}$ are associative in the sense that
\[
u_{p,qr} \circ (\id_{X_p} \otimes u_{q,r})
=
u_{pq,r} \circ (u_{p,q} \otimes \id_{X_r})
\foral
p,q,r \in P.
\]
\end{enumerate}
Brownlowe, Larsen and Stammeier note in \cite[Remark 6.2]{BLS18} 
that if the group of units of the semigroup is non-trivial, then every $X_p$ is automatically non-degenerate, see also \cite[Remark 1.3]{KL19b}.
Hence it has been accustomed to work within the non-degenerate setup.
To allow for comparisons, we will refer to $(X, \{u_{p,q}\}_{p,q \in P})$ as \emph{a product system in the sense of Fowler}.

It can be directly verified that, if $X$ is a concrete product system in some $\B(H)$ such that $[X_p \cdot X_q] = X_{pq}$ for all $p,q \in P$, then the family $\{X_p\}_{p \in P}$ defines a product system in the sense of Fowler by considering the unitary maps $\{u_{p,q}\}_{p,q \in P}$ given by
\[
u_{p,q} \colon X_p \otimes_A X_q \to X_{pq}; \xi_p \otimes \xi_q \mapsto \xi_{p} \cdot \xi_q.
\]

A \emph{(Toeplitz) representation} $t$ of a product system $X$ in the sense of Fowler consists of a family $\{t_p\}_{p\in P}$, where $(t_e,t_p)$ is a representation of the C*-correspondence $X_p$ for all $p\in P$, and
\[ 
t_p(\xi_p)t_q(\xi_q)=t_{pq}(u_{p,q}(\xi_p \otimes \xi_q)) \foral \xi_p\in X_p, \xi_q\in X_q, p, q\in P.
\]

If $(X, \{u_{p,q}^X\}_{p,q \in P})$ and $(Y, \{u_{p,q}^Y\}_{p,q \in P})$ are two product systems in the sense of Fowler with coefficients in two C*-algebras $A$ and $B$ respectively, then we say that $X$ is \emph{unitarily equivalent} to $Y$ if there is a family $\{W_p \colon X_p \to Y_p\}_{p \in P}$ of unitaries such that:
\begin{enumerate}
\item $W_e\colon A\to B$ is a $*$-isomorphism;
\item $\sca{W_p(\xi_p), W_p(\eta_p)}_{Y_p} = W_e(\sca{\xi_p,\eta_p}_{X_p})$ for all $\xi_p,\eta_p\in X_p$ and $p\in P\setminus\{e\}$;
\item $\vphi_{Y_p}(W_e(a)) W_p(\xi_p)=W_p(\vphi_{X_p}(a)\xi_p)$ for all $a\in A, \xi_p\in X_p$ and $p\in P\setminus\{e\}$;
\item $W_p(\xi_p) W_e(a) = W_p(\xi_pa)$ for all $a\in A, \xi_p\in X_p$ and $p\in P\setminus\{e\}$;
\item $u_{p,q}^Y \circ ( W_p\otimes W_q) = W_{pq} \circ u_{p,q}^X$ for all $p,q\in P$.
\end{enumerate}
In this case the representations of $X$ are in bijection with the representations of $Y$.
The reader may refer to \cite[Section 2.3]{DK23} for the full details.

Given a product system $X$ in the sense of Fowler, we can define the left-creation operators
\[
\la_p(\xi_p) := \sum_{r \in P} \tau_{r}^{pr}(\xi_p) \foral \xi_p \in X_p,
\]
as the s*-sum of the adjointable operators
\[
\tau_{r}^{pr}(\xi_p) \colon X_r \longrightarrow X_{pr} ; \eta_r \mapsto u_{p,r}(\xi_p \otimes \eta_r).
\]
It follows that the family $\la(X) := \{\la_p(X_p)\}_{p \in P}$ defines a concrete product system. 
Moreover, the family $\{\la_p\}_{p \in P}$ defines a unitary equivalence between $X$ and $\la(X)$.
Hence $X$ and $\la(X)$ admit the same representations.
In particular, if $\la'$ is the Fock representation of $\la(X)$, then $\ca(\la')$ is unitarily equivalent to $\ca(\la)$ by the unitary
\[
\bigoplus_{p \in P} \la_p \colon \F X \to \F \la(X); \xi_p \mapsto \la_p(\xi_p).
\]
Hence from an operator theoretic point of view, the concrete product systems that we will be using encompass Fowler's product systems.

\section{Fock covariant representations}\label{S:fock covariant}

\subsection{Fock covariant representations}\label{Ss:fock covariant}

Let $P$ be a unital subsemigroup of a discrete group $G$ and let $X$ be a product system over $P$. 
Recall that $\la$ is injective and admits a normal coaction by $G$.
Consider the induced ideal
\[
\J_{\cov}^{\fock} := \sca{\ker\la_\ast \cap [\T(X)]_e} \lhd \T(X),
\]
and write $\T_{\cov}^{\fock}(X)$ for the quotient of $\T(X)$ by $\J_{\cov}^{\fock}$.
Since $\J_{\cov}^{\fock}$ is an induced ideal of $\T(X)$, then $\T_{\cov}^{\fock}(X)$ inherits a coaction by $G$.
The \emph{Fock covariant} bundle of $X$ is the Fell bundle
\[
\F\C X := \Big\{ [\T_{\cov}^{\fock}(X)]_g \Big\}_{g \in G},
\]
defined by the coaction by $G$ on $\T_{\cov}^{\fock}(X)$.
A representation of $X$ that promotes to a representation of $\F\C X$ will be called a \emph{Fock covariant representation of $X$}.
We note that, the definition of the Fock covariant bundle does not depend on the choice of $G$.
This follows by applying the first part of the proof of \cite[Lemma 3.9]{Seh18} in this setting. Note also that since the bimodule properties are graded, every representation of $\F\C X$ is a representation of $X$.

Let $q_{\cov}^{\fock} \colon \T(X) \to \T_{\cov}^{\fock}(X)$ be the quotient map by the ideal $\J_{\cov}^{\fock}$.
Then 
\[
q_{\cov}^{\fock}(\P\S X)
= 
\Big\{ [\T_{\cov}^{\fock}(X)]_g \Big\}_{g \in G}
=
\F\C X,
\]
and thus combining Proposition \ref{P:qnt bnd} with Proposition \ref{P:max psx} we obtain a canonical $*$-isomorphism 
\[
\T_{\cov}^{\fock}(X)\simeq \ca_{\max}(\F\C X).
\]
By the definition of $\J_{\cov}^{\fock}$ we have
\[
\ker q_{\cov}^{\fock} \cap [\T(X)]_e
=
\J_{\cov}^{\fock} \cap [\T(X)]_e
=
\ker \la_\ast \cap [\T(X)]_e.
\]
By Proposition \ref{P:eq dgrm} we have
\[
\ker q_{\cov}^{\fock} \cap [\T(X)]_g
=
\ker \la_\ast \cap [\T(X)]_g
\foral
g \in G.
\]
Therefore $\la$ promotes to a representation of $\F\C X$ and we obtain the following commutative diagram
\[
\xymatrix{
\T(X)\simeq \ca_{\max}(\P\S X) \ar[rr]^{\la_\ast} \ar[dr]_{q_{\cov}^{\fock}} & & \T_\la(X) \\
& \T_{\cov}^{\fock}(X)\simeq \ca_{\max}(\F\C X) \ar[ur]_{\dot{\la}} &
}
\]
of canonical $*$-epimorphisms.
Since $\la_\ast$ and $q_{\cov}^{\fock}$ are equivariant we have that so is $\dot{\la}$.
On the other hand, since $\ker q_{\cov}^{\fock} \cap [\T(X)]_e = \ker \la_\ast \cap [\T(X)]_e$, we have that $\dot{\la}$ is injective on the $e$-fiber.
As the coaction by $G$ on $\la$ is normal, then so is the coaction by $G$ on $\dot{\la}$, and we conclude that
\[
\T_\la(X)=\ca(\dot{\la}) \simeq \ca_\la(\F\C X).
\]

The Fock representation satisfies the following property.
Let $p_i, q_i \in P$ and $\eps, \eps' \in \{0,1\}$ such that $p_1^{-\eps} q_1 \cdots p_n^{-1} q_n^{\eps'} = e$ and $q_n^{-\eps'}p_n \dots q_1^{-1}p_1^{\eps}P = \mt$.
Proposition \ref{P:fc mt} yields
\[
\left( \la_{p_1}(\xi_{p_1})^* \right)^{\eps} \la_{q_1}(\xi_{q_1}) \cdots \la_{p_n}(\xi_{p_n})^*\la_{q_n}(\xi_{q_n})^{\eps'} X_r = (0)
\foral 
r \in P,
\]
and therefore we have
\begin{equation} \label{eq:fc mt}
\bo{K}_{\mt, \la_\ast} = (0).
\end{equation}

We aim to give a characterisation for the equivariant Fock covariant injective representations of $X$.
We begin with the following proposition.

\begin{proposition}\label{P:B red}
Let $P$ be a unital subsemigroup of a discrete group $G$ and let $X$ be a product system over $P$.
Let $\hat{t}$ be a representation of $X$ such that $\T(X) = \ca(\hat{t})$, and let $t$ be an equivariant representation of a product system $X$. 
Then $t$ is Fock covariant if and only if 
\[
\ker \la_\ast\cap\bo{B}_{\F,\hat{t}_\ast}\subseteq\ker t_\ast\cap\bo{B}_{\F,\hat{t}_\ast}
\text{ for every finite $\cap$-closed $\F\subseteq \J$}.
\]
\end{proposition}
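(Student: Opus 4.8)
The plan is to first reduce ``$t$ is Fock covariant'' to a single containment of kernels on the fixed point algebra, and then to transport that containment along the inductive limit presentation $[\T(X)]_e = \varinjlim_{\F}\bo{B}_{\F,\hat{t}_\ast}$ over the directed family of finite $\cap$-closed subsets $\F\subseteq\J$. For the reduction, recall that $\T_{\cov}^{\fock}(X) = \T(X)/\J_{\cov}^{\fock}$ with $\J_{\cov}^{\fock} = \sca{\ker\la_\ast \cap [\T(X)]_e}$, and that $\T_{\cov}^{\fock}(X) \simeq \ca_{\max}(\F\C X)$. Hence $t$ is Fock covariant exactly when the induced $*$-homomorphism $t_\ast \colon \T(X)\to\B(H)$ factors through $q_{\cov}^{\fock}$, i.e.\ when $\J_{\cov}^{\fock}\subseteq \ker t_\ast$; and since $\ker t_\ast$ is a closed two-sided ideal of $\T(X)$ while $\J_{\cov}^{\fock}$ is the closed ideal it generates, this is in turn equivalent to
\[
\ker \la_\ast \cap [\T(X)]_e \subseteq \ker t_\ast .
\]
So it suffices to show that this single containment is equivalent to the stated family of core-level containments.

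The forward direction is immediate, since $\bo{B}_{\F,\hat{t}_\ast}\subseteq [\T(X)]_e$ for every finite $\cap$-closed $\F$: intersecting the displayed containment with $\bo{B}_{\F,\hat{t}_\ast}$ gives $\ker\la_\ast\cap\bo{B}_{\F,\hat{t}_\ast}\subseteq \ker t_\ast\cap\bo{B}_{\F,\hat{t}_\ast}$. For the converse I would use that $\ker\la_\ast\cap[\T(X)]_e$ is a closed two-sided ideal of the C*-algebra $[\T(X)]_e$, which is the inductive limit of the C*-subalgebras $\bo{B}_{\F,\hat{t}_\ast}$; the point is the folklore fact that such an ideal is the closure of the union of its intersections $\ker\la_\ast\cap\bo{B}_{\F,\hat{t}_\ast}$ with the building blocks, and each of those lies in $\ker t_\ast$ by hypothesis. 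Concretely, since $\ker\la_\ast\cap[\T(X)]_e$ is a C*-algebra it is the linear span of its positive elements, so it is enough to show $t_\ast(x)=0$ for positive $x\in\ker\la_\ast\cap[\T(X)]_e$. Given such an $x$ and $\eps>0$, I would choose $\F$ and a self-adjoint $y\in\bo{B}_{\F,\hat{t}_\ast}$ with $\|x-y\|<\eps$ (approximate $x$ by an element of $\bigcup_{\F}\bo{B}_{\F,\hat{t}_\ast}$ and symmetrise), split $y=y_+-y_-$ into its orthogonal positive and negative parts, and form the cutdown $z:=(y_+-\eps)_+-(y_--\eps)_+\in\bo{B}_{\F,\hat{t}_\ast}$. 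Because $\la_\ast$ is a $*$-homomorphism it commutes with continuous functional calculus and carries $y_\pm$ to the positive and negative parts of $\la_\ast(y)$; from $\la_\ast(x)=0$ we get $\|\la_\ast(y_\pm)\|\le\|\la_\ast(y)\|=\|\la_\ast(y-x)\|\le\|y-x\|<\eps$, hence $\la_\ast((y_\pm-\eps)_+)=(\la_\ast(y_\pm)-\eps)_+=0$ and so $\la_\ast(z)=0$. Thus $z\in\ker\la_\ast\cap\bo{B}_{\F,\hat{t}_\ast}\subseteq\ker t_\ast$, and since $\|y-z\|\le\eps$ (again by orthogonality of the two parts) we obtain $\|t_\ast(x)\|=\|t_\ast(x-z)\|\le\|x-z\|<2\eps$. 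Letting $\eps\to 0$ gives $t_\ast(x)=0$, whence $\ker\la_\ast\cap[\T(X)]_e\subseteq\ker t_\ast$ and, by the reduction, $t$ is Fock covariant.

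The only non-formal step is this last approximation, namely arranging that the cutdown $z$ sits simultaneously inside a single core $\bo{B}_{\F,\hat{t}_\ast}$ and inside $\ker\la_\ast$ — equivalently, the specialisation to the present setting of the standard fact that a closed ideal of an inductive limit of C*-algebras is the closed union of its intersections with the defining subalgebras. Everything else (the two reductions, the forward direction) is bookkeeping with induced ideals and the graded structure already in place in the preliminaries, so I expect this approximation to be the main obstacle, although one could instead simply invoke the inductive-limit fact as known.
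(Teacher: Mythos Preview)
Your proof is correct and follows essentially the same route as the paper's: both reduce Fock covariance to the containment $\ker\la_\ast\cap[\T(X)]_e\subseteq\ker t_\ast$ and then pass through the inductive limit $[\T(X)]_e=\varinjlim_\F\bo{B}_{\F,\hat{t}_\ast}$. The only difference is that the paper simply invokes the inductive-limit fact $\ker\la_\ast\cap[\T(X)]_e=\overline{\bigcup_\F(\ker\la_\ast\cap\bo{B}_{\F,\hat{t}_\ast})}$ as known (and closes with Proposition~\ref{P:eq dgrm}), whereas you supply the functional-calculus cutdown argument in full and handle the final reduction directly via the ideal $\J_{\cov}^{\fock}$.
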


\begin{proof}
If $t$ is Fock covariant, then by definition $t_\ast$ factors through the canonical $*$-epimorphism $q_{\cov}^{\fock} \colon \T(X) \to \T_{\cov}^{\fock}(X)$, and thus $\ker q_{\cov}^{\fock} \subseteq \ker t_\ast$.
By the definition of $\J_{\cov}^{\fock}$ we also have $\ker \la_\ast \cap [\T(X)]_e = \ker q_{\cov}^{\fock} \cap [\T(X)]_e$, and therefore
\[
\ker \la_\ast \cap \bo{B}_{\F,\hat{t}_\ast} 
=
\ker q_{\cov}^{\fock} \cap \bo{B}_{\F,\hat{t}_\ast} 
\subseteq
\ker t_\ast \cap \bo{B}_{\F,\hat{t}_\ast},
\]
where we used that each $\bo{B}_{\F, \hat{t}_\ast}$ is a C*-subalgebra of $[\T(X)]_e$.

For the converse, recall that the fixed point algebra $[\T(X)]_e$ is the inductive limit of $\bo{B}_{\F,\hat{t}_\ast}$ for $\cap$-closed families $\F \subseteq \J$.
By the properties of the inductive limits we have
\begin{align*}
\ker q_{\cov}^{\fock} \cap [\T(X)]_e
& =
\ker \la_\ast \cap [\T(X)]_e
=
\ol{\bigcup_{\F}\left( \ker\la_\ast \cap \bo{B}_{\F,\hat{t}_\ast} \right)} 
\\
& \subseteq
\ol{\bigcup_{\F}\left(\ker t_\ast \cap \bo{B}_{\F,\hat{t}_\ast}\right)}
= 
\ker t_\ast\cap [\T(X)]_e,
\end{align*}
where we used the assumption for the inclusion.
By combining Proposition \ref{P:eq dgrm} with Proposition \ref{P:max psx} we get that $t$ induces a representation of $\F\C X$, i.e., $t$ is Fock covariant, and the proof is complete.
\end{proof}

We can now provide the characterisation of equivariant Fock covariant injective representations of $X$.

\begin{theorem}\label{T:Fock cov}
Let $P$ be a unital subsemigroup of a discrete group $G$ and let $X$ be a product system over $P$.
Let $\hat{t}$ be a representation of $X$ such that $\T(X) = \ca(\hat{t})$.
An equivariant injective representation $t$ of $X$ is Fock covariant if and only if $t$ satisfies the following conditions:
\begin{enumerate}
\item $\bo{K}_{\mt, t_\ast} = (0)$.
\item For any $\cap$-closed $\F=\{\bo{x}_1,\dots,\bo{x}_n\} \subseteq \J$ such that $\bigcup_{i=1}^n \bo{x}_i \neq \mt$, and any $b_{\bo{x}_i}\in \bo{K}_{\bo{x}_i,\hat{t}_\ast}$, with $i=1,\dots,n$, the following property holds:
\begin{center}
if $\sum\limits_{i: r\in \bo{x}_i}t_\ast(b_{\bo{x}_i})t_r(X_r)=(0)$ for all $r\in \bigcup_{i=1}^n \bo{x}_i$, then $\sum\limits_{i=1} ^n t_\ast(b_{\bo{x}_i})=0$.    
\end{center}
\end{enumerate}
\end{theorem}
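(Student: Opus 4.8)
The plan is to recognise conditions (i) and (ii) as a concrete restatement of the inclusion criterion of Proposition \ref{P:B red}, namely that an equivariant $t$ is Fock covariant precisely when $\ker\la_\ast\cap\bo{B}_{\F,\hat t_\ast}\subseteq\ker t_\ast$ for every finite $\cap$-closed $\F\subseteq\J$ (the further intersection with $\bo{B}_{\F,\hat t_\ast}$ on the right being automatic). I would split this family of inclusions according to whether $\bigcup\F$ equals $\mt$ or not, matching the $\F=\{\mt\}$ instance with (i) and the remaining instances with (ii), and in each case argue the equivalence directly; this then yields both directions of the theorem simultaneously.

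For the degenerate piece, if $\bigcup\F=\mt$ then $\F=\{\mt\}$ and $\bo{B}_{\F,\hat t_\ast}=\bo{K}_{\mt,\hat t_\ast}$. Since $\la_\ast(\bo{K}_{\mt,\hat t_\ast})\subseteq\bo{K}_{\mt,\la_\ast}=(0)$ by \eqref{eq:fc mt}, we have $\bo{K}_{\mt,\hat t_\ast}\subseteq\ker\la_\ast$, so the inclusion of Proposition \ref{P:B red} for this $\F$ collapses to $\bo{K}_{\mt,\hat t_\ast}\subseteq\ker t_\ast$, that is $\bo{K}_{\mt,t_\ast}=(0)$, which is exactly (i).

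For the main piece, fix a $\cap$-closed $\F=\{\bo x_1,\dots,\bo x_n\}$ with $\bigcup_i\bo x_i\neq\mt$. Since $\bo{B}_{\F,\hat t_\ast}=\sum_i\bo{K}_{\bo x_i,\hat t_\ast}$ is already a C*-subalgebra (Section \ref{S:pre}), its elements are exactly the finite sums $b=\sum_i b_{\bo x_i}$ with $b_{\bo x_i}\in\bo{K}_{\bo x_i,\hat t_\ast}$. The crux is the dictionary: testing $\la_\ast(b)$ on a summand $X_r$ of $\F X$, Proposition \ref{P:fc mt 2} kills the terms with $r\notin\bo x_i$, so $\la_\ast(b)\xi_r=\sum_{i:\,r\in\bo x_i}\la_\ast(b_{\bo x_i})\xi_r$ for $\xi_r\in X_r$; since $\bigoplus_r X_r$ is dense in $\F X$, this shows $b\in\ker\la_\ast$ if and only if $\sum_{i:\,r\in\bo x_i}\la_\ast(b_{\bo x_i})\xi_r=0$ for every $r\in\bigcup_i\bo x_i$ and $\xi_r\in X_r$. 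Applying $t_r$ and invoking Proposition \ref{P:taut} rewrites the left-hand side as $\sum_{i:\,r\in\bo x_i}t_\ast(b_{\bo x_i})t_r(\xi_r)$, and as $t$ is injective, each $t_r$ is isometric, so this vanishing is equivalent to $\sum_{i:\,r\in\bo x_i}t_\ast(b_{\bo x_i})t_r(X_r)=(0)$ for all $r\in\bigcup_i\bo x_i$ — the hypothesis of (ii). Since the conclusion $\sum_i t_\ast(b_{\bo x_i})=0$ of (ii) is just $t_\ast(b)=0$, and since every $b\in\bo{B}_{\F,\hat t_\ast}$ arises as such a sum, (ii) is precisely the statement that $\ker\la_\ast\cap\bo{B}_{\F,\hat t_\ast}\subseteq\ker t_\ast$ for every $\cap$-closed $\F$ with $\bigcup\F\neq\mt$. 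Combining with the degenerate piece and quoting Proposition \ref{P:B red} completes the proof.

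I expect the only genuine work to be the dictionary in the third paragraph. The care required is in tracking where the objects live — in particular that $\la_\ast(b_{\bo x_i})$ sends $X_r$ into $X_r$ when $r\in\bo x_i$, which is what legitimises applying $t_r$ fibrewise through Proposition \ref{P:taut} — and in noticing that injectivity of $t$ enters exactly once, to pull the vanishing of $\sum_i t_\ast(b_{\bo x_i})t_r(\xi_r)$ back to the vanishing of $\sum_i\la_\ast(b_{\bo x_i})\xi_r$ in the forward implication. Everything else is bookkeeping with the $\bo{K}$- and $\bo{B}$-cores and the propositions already established.
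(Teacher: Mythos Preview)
Your proposal is correct and follows essentially the same approach as the paper: both reduce via Proposition \ref{P:B red} to the kernel inclusion on the $\bo{B}_\F$-cores and establish the same dictionary (through Propositions \ref{P:taut} and \ref{P:fc mt 2}, plus injectivity of $t_r$) identifying the hypothesis of (ii) with $\sum_i b_{\bo x_i}\in\ker\la_\ast$. Your packaging is slightly more economical---you set up the dictionary as a single biconditional and invoke Proposition \ref{P:B red} for both directions, whereas the paper argues the forward implication directly from $\ker\la_\ast\cap[\T(X)]_e\subseteq\ker t_\ast$ and reserves Proposition \ref{P:B red} for the converse---but the substance is identical.
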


\begin{proof} 
First we note that the Fock representation $\la$ satisfies conditions (i) and (ii).
Condition (i) for $\la$ is shown in (\ref{eq:fc mt}).
For condition (ii), let $\F=\{ \bo{x}_1,\dots,\bo{x}_n\}$ be a $\cap$-closed finite subset of $\J$ such that $\bigcup_{i=1}^n \bo{x}_i \neq \mt$, and let $b_{\bo{x}_i}$ be in $ \bo{K}_{\bo{x}_i,\hat{t}_\ast}$ such that 
\[
\sum_{i: r\in \bo{x}_i}\la_\ast(b_{\bo{x}_i})\la_r(\xi_r)=0
\foral
r\in \bigcup_{i=1}^n \bo{x}_i.
\]
Recall that we have $\la_\ast(b_{\Bx_i}) \xi_r = 0$ whenever $r \notin \Bx_i$ by Proposition \ref{P:fc mt 2}.
Therefore, if $r\notin\bigcup_{i=1}^n \bo{x}_i$, then $\sum_{i=1}^n\la_\ast(b_{\bo{x}_i}) \xi_r=0$.
On the other hand, if $r\in \bigcup_{i=1}^n \bo{x}_i$, then we have
\begin{align*}
\la_r\left(\sum_{i=1}^n\la_\ast(b_{\bo{x}_i})\xi_r\right)
=
\sum_{i: r\in \bo{x}_i}\la_r(\la_\ast(b_{\bo{x}_i})\xi_r)
=
\sum_{i: r\in \bo{x}_i}\la_\ast(b_{\bo{x}_i})\la_r(\xi_r)
=
0,
\end{align*}
where in the first equality we used that we have $\la_\ast(b_{\Bx_i}) \xi_r = 0$ whenever $r \notin \Bx_i$ from Proposition \ref{P:fc mt 2}, and in the second equality we used Proposition \ref{P:taut}.
By injectivity of $\la_r$, we have $\sum_{i=1}^n\la_\ast(b_{\bo{x}_i})\xi_r=0$. 
This concludes that $\sum_{i=1}^n\la_\ast(b_{\bo{x}_i}) = 0$, as required.

Now suppose that $t$ is Fock covariant.
Recall that we have
\[
\ker \la_\ast \cap [\T(X)]_g = \ker q_{\cov}^{\fock} \cap [\T(X)]_g
\foral g \in G,
\]
by the definition of $\J_{\cov}^{\fock}$.
Since $t$ is Fock covariant, we have that $t_\ast$ factors through the quotient map $q_{\cov}^{\fock} \colon \T(X) \to \T_{\cov}^{\fock}(X)$, and therefore
\[
\ker \la_\ast \cap [\T(X)]_g 
= 
\ker q_{\cov}^{\fock} \cap [\T(X)]_g
\subseteq
\ker t_\ast \cap [\T(X)]_g
\foral g \in G.
\]

For condition (i), we have
\[
\bo{K}_{\mt, \hat{t}_\ast} \subseteq \ker\la_\ast\cap[\T(X)]_e
\subseteq
\ker t_\ast\cap[\T(X)]_e,
\]
where we used that $\la$ satisfies condition (i) in the first inclusion.
It thus follows that
\[
\bo{K}_{\mt, t_\ast} = t_\ast(\bo{K}_{\mt, \hat{t}_\ast}) = (0),
\]
as required.

For condition (ii), let $\F=\{ \bo{x}_1,\dots,\bo{x}_n\}$ be a $\cap$-closed finite subset of $\J$ such that $\bigcup_{i=1}^n \bo{x}_i \neq \mt$, and let $b_{\bo{x}_i} \in \bo{K}_{\bo{x}_i,\hat{t}_\ast}$ such that  
\[
\sum_{i: r\in \bo{x}_i}t_\ast(b_{\bo{x}_i})t_r(X_r)=(0)
\foral 
r\in \bigcup_{i=1}^n \bo{x}_i.
\]
Fix $r \in P$.
If $r\not\in\bigcup_{i=1}^n \bo{x}_i$, then Proposition \ref{P:fc mt 2} yields
\[
\sum_{i=1}^n \la_\ast(b_{\bo{x}_i})X_r=(0).
\]
On the other hand, if $r\in\bigcup_{i=1}^n \bo{x}_i$ then for every $\xi_r\in X_r$ we have
\begin{align*}
t_r \left( \sum_{i=1}^n \la_\ast(b_{\bo{x}_i})\xi_r \right)
=
\sum_{i: r\in \bo{x}_i}t_r(\la_\ast(b_{\bo{x}_i})\xi_r)
= 
\sum_{i: r\in \bo{x}_i}t_\ast(b_{\bo{x}_i})t_r(\xi_r)
=
0,
\end{align*}
where in the first equality we used that we have $\la_\ast(b_{\Bx_i}) \xi_r = 0$ whenever $r \notin \Bx_i$ from Proposition \ref{P:fc mt 2}, and in the second equality we used Proposition \ref{P:taut}.
Injectivity of $t$ implies that $\sum_{i=1}^n \la_\ast(b_{\bo{x}_i})\xi_r=0$. We conclude that $\sum_{i=1}^n \la_\ast(b_{\bo{x}_i})=0$ and thus
\[
\sum_{i=1}^n b_{\bo{x}_i}\in\ker \la_\ast \cap [\T(X)]_e \subseteq \ker t_\ast \cap [\T(X)]_e.
\]
Therefore $\sum_{i=1}^n t_\ast(b_{\Bx_i}) = 0$, as required.
This completes the proof of the one direction.

For the converse, suppose that $t$ satisfies conditions (i) and (ii).
By Proposition \ref{P:B red}, it suffices to prove the inclusion $\ker \la_\ast \cap \bo{B}_{\F,\hat{t}_\ast}\subseteq\ker t_\ast \cap \bo{B}_{\F,\hat{t}_\ast}$ for every finite $\cap$-closed $\F\subseteq \J$.
If $\F = \{\mt\}$, then condition (i) for $\la$ and $t$ yields
\[
\ker \la_\ast \cap \bo{B}_{\{\mt\},\hat{t}_\ast}
=
\ker \la_\ast \cap \bo{K}_{\mt,\hat{t}_*} 
=
\bo{K}_{\mt,\hat{t}_*} 
=
\ker t_\ast \cap \bo{K}_{\mt,\hat{t}_*} 
=
\ker t_\ast\cap \bo{B}_{\{\mt\},\hat{t}_\ast},
\]
where we used the fact that the space $\bo{K}_{\mt,\hat{t}_*} = \bo{B}_{\{\mt\},\hat{t}_\ast}$ is a C*-subalgebra of $[\T(X)]_e$.
Next suppose that $\F=\{\bo{x}_1,\dots,\bo{x}_n\}$ such that $\bigcup_{i=1}^n \Bx_i \neq \mt$, and let $b_{\bo{x}_i} \in \bo{K}_{\bo{x}_i,\hat{t}_\ast}$ for $i=1,\dots, n$ such that
\[
\sum_{i=1}^n b_{\bo{x}_i} \in \ker \la_\ast \cap \bo{B}_{\F,\hat{t}_\ast}.
\]
For $r\in \bigcup_{i=1}^n \bo{x}_i$ we have
\[
\sum_{i: r\in \bo{x}_i} t_\ast(b_{\bo{x}_i})t_r(\xi_r)
=
t_r \left(\sum_{i: r\in \bo{x}_i} \la_\ast(b_{\bo{x}_i})\xi_r \right)
=
t_r\left(\sum_{i=1}^n\la_\ast(b_{\bo{x}_i})\xi_r\right)
=
0,
\] 
where in the first equality we used Proposition \ref{P:taut}, and on the second equality we used that we have $\la_\ast(b_{\bo{x}_i})\xi_r=0$ if $r\notin \bo{x}_i$ from Proposition \ref{P:fc mt 2}.
Condition (iii) now yields $\sum_{i=1}^n t_\ast(b_{\bo{x}_i})=0$, and thus
\[
\sum_{i=1}^n b_{\bo{x}_i} \in \ker t_\ast \cap \bo{B}_{\F,\hat{t}_\ast},
\]
as required.
\end{proof}

\begin{remark}
We note that, if $t$ is an equivariant Fock covariant injective representation of $X$, then the induced $*$-representation $\T_{\cov}^{\fock}(X) \to \ca(t)$ is injective on every $\bo{K}_{\bullet}$-core of $\T_{\cov}^{\fock}(X)$.
Indeed, 
let $b_{\Bx} \in \bo{K}_{\Bx, \hat{t}}$ for some $\mt \neq \Bx \in \J$ such that $\la_\ast(b_{\Bx})\not=0$, where $\ca(\hat{t}) = \T(X)$.
Then there exists a $\xi_r \in X_r$ such that $\la_\ast(b_{\Bx}) \xi_r \neq 0$.
By Proposition \ref{P:taut} we deduce that
\[
t_\ast(b_{\Bx}) t_r(\xi_r) = t_r(\la_\ast(b_{\Bx}) \xi_r) \neq 0,
\]
as $t$ is injective, and therefore $t_\ast(b_{\Bx}) \neq 0$.
\end{remark}

\subsection{Compactly aligned product systems over right LCM semigroups}\label{Ss:fock covariant}

A unital semigroup $P$ is said to be a \emph{right LCM semigroup} if it is left cancellative and satisfies Clifford's condition, i.e., for every $p,q\in P$ with $pP\cap qP \neq\mt$, there exists a $w\in P$ such that $pP\cap qP=wP$.
The element $w$ is referred to as a \emph{right least common multiple} or \emph{right LCM} of $p$ and $q$.
If an element  $w\in P$ is a right least common multiple of $p, q \in P$, then so is $w r$  for every $r \in P^*:= P\cap P^{-1}$.
It follows that $P$ is a right LCM semigroup if and only if $\J = \{pP \mid p \in P\} \bigcup \{\mt\}$.

In this section we will only consider group embeddable right LCM semigroups $P$ inside $G$.
If the right LCM is unique for every pair of elements $g_1, g_2 \in G$, then the pair $(G,P)$ is called \emph{quasi-lattice ordered}, a setup that was introduced by Nica \cite{Nic92}.
Although Nica's definition is implemented across the group, Exel \cite{Exe17} noted that it can be relaxed to be relevant only for the semigroup, in which case the pair $(G,P)$ is called \emph{weakly quasi-lattice ordered}.
Further examples of right LCM semigroups include the Artin monoids \cite{BS72}, the Baumslag-Solitar monoids $B(m,n)^+$ \cite{HNSY19, Li20, Spi12}, and the semigroup $R \rtimes R^\times$ of affine transformations of an integral domain $R$ that satisfies the GCD condition \cite{Li13, Nor14}.

Fowler's original work related to product systems over quasi-lattices \cite{Fow02}.
Product systems over right LCM semigroups were introduced and studied by Kwa\'{s}niewski and Larsen \cite{KL19a, KL19b}, extending the construction of Fowler, and they have been investigated further in \cite{DKKLL20, KKLL21b}.
The interest lies in that they retain several of the structural properties from the single C*-correspondence case.
With the addition of one further property, the generated C*-algebras admit a Wick ordering.
A product system $X$ over $P$ in the sense of Fowler gives rise to $*$-homomorphisms  
\[
i_p^{pq}\colon \L(X_p)\to \L (X_{pq}) 
\text{ where } 
i_p^{pq}(S) := u_{p, q}( S \otimes \id_{X_q} ) u_{p, q}^* \foral S \in \L(X_p).
\]
We say that $X$ is \emph{compactly aligned} if $i_{p}^{w}(k_p) i_q^w(k_q) \in \K(X_w)$ for all $k_p \in \K(X_p)$ and $k_q \in \K(X_q)$, whenever $pP \cap qP = wP$.
It was established in the discussion following \cite[Definition 2.9]{DKKLL20} that this definition is independent of the choice of $w$.
For a representation $t$ of a product system $X$ we use the notation
\[
t^{(p)} \colon \K(X_p) \to \ca(t); \theta^{X_p}_{\xi_p,\eta_p} \mapsto t_p(\xi_p) t_p(\eta_p)^*,
\]
for the induced $*$-representation of the compact operators $\K(X_p)$. 

\begin{proposition} \label{P:fock-nica}
Let $P$ be a unital right LCM semigroup that is a subsemigroup of a discrete group $G$ and let $X$ be a product system over $P$ in the sense of Fowler.
Then $X$ is compactly aligned if and only if for every $p, q \in P$ with $pP \cap qP = wP$ we have
\[
\la^{(p)}(\K(X_p)) \la^{(q)}(\K(X_q))
\subseteq 
\la^{(w)}(\K(X_w)).
\]
\end{proposition}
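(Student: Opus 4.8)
The plan is to first establish an explicit normal form for $\la^{(p)}(k_p)$ on the Fock module $\F X = \bigoplus_{r\in P} X_r$, and then to compare the two sides of the claimed equivalence block by block. First I would check that, for $\xi_p,\eta_p\in X_p$, the operator $\la_p(\xi_p)\la_p(\eta_p)^*$ preserves every summand $X_r$, vanishes on $X_r$ when $r\notin pP$, and restricts on $X_{ps}$ to $\tau_s^{ps}(\xi_p)\tau_s^{ps}(\eta_p)^*$; this is read off directly from $\la_p(\xi_p)\eta_r=u_{p,r}(\xi_p\otimes\eta_r)$ together with the formula for $\la_p(\xi_p)^*$. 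Writing $\tau_s^{ps}(\xi_p)=u_{p,s}\circ(\xi_p\otimes-)$, where $(\xi_p\otimes-)\colon X_s\to X_p\otimes_A X_s$, and using the identity $(\xi_p\otimes-)(\eta_p\otimes-)^*=\theta^{X_p}_{\xi_p,\eta_p}\otimes\id_{X_s}$ on $X_p\otimes_A X_s$, one obtains $\tau_s^{ps}(\xi_p)\tau_s^{ps}(\eta_p)^*=u_{p,s}(\theta^{X_p}_{\xi_p,\eta_p}\otimes\id_{X_s})u_{p,s}^*=i_p^{ps}(\theta^{X_p}_{\xi_p,\eta_p})$. Since the adjointable operators on $\F X$ preserving every $X_r$ form a norm-closed $*$-algebra on which compression to a fixed summand is a $*$-homomorphism, and $\K(X_p)$ is the closed linear span of the $\theta^{X_p}_{\xi_p,\eta_p}$, a density and continuity argument then gives: for every $k_p\in\K(X_p)$ the operator $\la^{(p)}(k_p)$ preserves each $X_r$, vanishes on $X_r$ for $r\notin pP$, and restricts to $i_p^{ps}(k_p)$ on $X_{ps}$; that is, $\la^{(p)}(k_p)=\bigoplus_{s\in P} i_p^{ps}(k_p)$.

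I would also record two standard facts about the maps $i_p^{pq}$: from Fowler's axiom (iii), $i_p^{pe}=\id_{\L(X_p)}$, and from the associativity axiom (iv) a routine computation yields $i_p^{pvt}=i_{pv}^{pvt}\circ i_p^{pv}$ for all $p,v,t\in P$ (see \cite{DKKLL20, KL19a}). Hence, for $r\in pP$ one sets $i_p^r:=i_p^{p(p^{-1}r)}$; and if moreover $w\in pP$ with $r\in wP$, say $r=wt$, then $i_p^r=i_w^{wt}\circ i_p^w$.

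For the forward implication, suppose $X$ is compactly aligned and fix $p,q$ with $pP\cap qP=wP$ and $k_p\in\K(X_p)$, $k_q\in\K(X_q)$. Put $k_w:=i_p^w(k_p)\,i_q^w(k_q)$, which lies in $\K(X_w)$ by compact alignment (note $w\in pP\cap qP$). By the normal form, $\la^{(p)}(k_p)\la^{(q)}(k_q)$ preserves every $X_r$ and vanishes on $X_r$ unless $r\in pP\cap qP=wP$, while for $r=wt$ it restricts to $i_p^r(k_p)\,i_q^r(k_q)=i_w^{wt}(i_p^w(k_p))\,i_w^{wt}(i_q^w(k_q))=i_w^{wt}(k_w)$, which is exactly the restriction of $\la^{(w)}(k_w)$ to $X_{wt}$. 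Comparing on all summands, $\la^{(p)}(k_p)\la^{(q)}(k_q)=\la^{(w)}(k_w)\in\la^{(w)}(\K(X_w))$; since every element of $\la^{(p)}(\K(X_p))$, resp.\ $\la^{(q)}(\K(X_q))$, has the form $\la^{(p)}(k_p)$, resp.\ $\la^{(q)}(k_q)$, the asserted inclusion follows. Conversely, assuming the inclusion, fix $k_p\in\K(X_p)$, $k_q\in\K(X_q)$ with $pP\cap qP=wP$; then $\la^{(p)}(k_p)\la^{(q)}(k_q)=\la^{(w)}(k_w)$ for some $k_w\in\K(X_w)$, and compressing both sides to the summand $X_w=X_{we}$ gives $i_p^w(k_p)\,i_q^w(k_q)=i_w^{we}(k_w)=k_w\in\K(X_w)$, i.e.\ $X$ is compactly aligned.

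The one point requiring genuine care is the normal form of the first step: it involves careful bookkeeping with the balanced tensor-product identifications and with the adjoints of the maps $\xi_p\otimes-$, and then the density and continuity argument promoting the rank-one case to all of $\K(X_p)$. Once this is in place, both directions reduce to a block-by-block comparison of operators on $\F X$.
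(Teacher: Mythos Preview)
Your proposal is correct and follows essentially the same approach as the paper: both arguments identify $\la^{(p)}(k_p)$ with the block-diagonal operator $\bigoplus_{s\in P} i_p^{ps}(k_p)$ (vanishing off $pP$), then compare $\la^{(p)}(k_p)\la^{(q)}(k_q)$ with $\la^{(w)}(i_p^w(k_p)i_q^w(k_q))$ summand by summand using the composition rule $i_p^r = i_w^r\circ i_p^w$ for $r\in wP$, and for the converse both evaluate at the $X_w$-summand. The only difference is packaging: you set up the normal form and the composition rule explicitly as preliminaries, whereas the paper uses them directly in the computation.
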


\begin{proof}
First suppose that $X$ is compactly aligned, and let $p, q \in P$ with $pP \cap qP = wP$.
Let $k_p \in \K(X_p)$ and $k_q \in \K(X_q)$. 
By compact alignment we have $i_p^w(k_p)i_q^w(k_q)\in \K(X_w)$. 
Hence for $r \in wP$ we get 
\begin{align*}
\la^{(p)}(k_p)\la^{(q)}(k_q)\xi_r
& =
\la^{(p)}(k_p)(i_q^r(k_q)\xi_r)
=
i_p^r(k_p)(i_q^r(k_q)\xi_r) \\
& =
i_w^r(i_p^w(k_p))i_w^r(i_q^w(k_q))\xi_r
=
i_w^r(i_p^w(k_p)i_q^w(k_q))\xi_r
=
\la^{(w)}(i_p^w(k_p)i_q^w(k_q))\xi_r.
\end{align*}
On the other hand for $r\not\in wP$, we have $\la^{(w)}(i_p^w(k_p)i_q^w(k_q))\xi_r = 0$.
If $r \notin qP$ then $\la^{(q)}(k_q)\xi_r = 0$.
If $r \in qP$ then $\la^{(q)}(k_q)\xi_r \in X_r$, but then it has to be that $r \notin pP$, and so $\la^{(p)}(k_p) (\la^{(q)}(k_q)\xi_r) = 0$.
In all cases, for $r \notin wP$ we have
\[
\la^{(p)}(k_p)\la^{(q)}(k_q)\xi_r
=
0
=
\la^{(w)}(i_p^w(k_p)i_q^w(k_q))\xi_r.
\]
Therefore, we conclude that
\[
\la^{(p)}(k_p)\la^{(q)}(k_q)=\la^{(w)}(i_p^w(k_p)i_q^w(k_q))\in \la^{(w)}(\K(X_w)).
\]

For the converse, let $p, q \in P$ such that $pP\cap qP=wP$ for some $w\in P$, and let $k_p \in \K(X_p)$ and $k_q \in \K(X_q)$.
Since
\[
\la^{(p)}(k_p) \la^{(q)}(k_q) \in \la^{(w)}(\K(X_w)),
\]
 there exists a $k_w \in\K(X_w)$ such that $\la^{(p)}(k_p)\la^{(q)}(k_q) = \la^{(w)}(k_w)$.
For every $\xi_w \in X_w$ we get
\[
i_p^w(k_p)i_q^w(k_q) \xi_w
=
\la^{(p)}(k_p)\la^{(q)}(k_q) \xi_w
= 
\la^{(w)}(k_w) \xi_w
=
k_w \xi_w.
\]
Therefore we have $i_p^w(k_p)i_q^w(k_q)=k_w\in \K(X_w)$, and the proof is complete.
\end{proof}

Let $P$ be a unital right LCM semigroup that is a subsemigroup of a discrete group $G$ and let $X$ be a compactly aligned product system over $P$ in the sense of Fowler. Let $t$ be a representation of $X$.
We say that $t$ is \emph{Nica covariant} if and only if
\[
t^{(p)}(k_p) t^{(q)}(k_q)
=
\begin{cases}
t^{(w)} \left( i_{p}^w(k_p) i_q^w(k_q) \right) & \text{if } pP \cap qP = wP, \\
0 & \text{if } pP\cap qP=\mt,
\end{cases}
\]
for all $k_p\in \K(X_p)$ and $k_q\in \K(X_q)$.
Because of linearity and continuity of $t^{(p)}, t^{(q)}$ and $t^{(w)}$, we have that $t$ is Nica covariant if and only if 
\[
t^{(p)} \left( \theta^{X_p}_{\xi_p,\eta_p} \right) t^{(q)} \left( \theta^{X_q}_{\xi_q,\eta_q} \right)
=
\begin{cases}
t^{(w)}\left( i_p^w\left(\theta^{X_p}_{\xi_p,\eta_p}\right) i_q^w\left(\theta^{X_q}_{\xi_q,\eta_q}\right) \right) & \text{if } pP\cap qP=wP, \\
0 & \text{if } pP\cap qP=\mt,
\end{cases}
\]
for all $\xi_p,\eta_p\in X_p$ and $\xi_q,\eta_q\in X_q$.
We write $\N\T(X)$ for the universal C*-algebra with respect to the Nica covariant representations of $X$.

In \cite[Proposition 2.4]{DKKLL20} it is shown that, if  $w \in P$ and $r \in P^*$, then $i_w^{wr}(k_w) \in \K(X_{wr})$ and $t^{(wr)}(i_{w}^{wr}(k_w)) = t^{(w)}(k_w) \foral k_w \in \K(X_w)$, when $t$ is Nica covariant.
Moreover, in the discussion following \cite[Definition 2.9]{DKKLL20} it is shown that
\begin{equation} \label{eq:Nc core1}
t_p(X_p)^*t_q(X_q)=(0) \text{ for } p,q\in P \text{ such that } pP\cap qP=\mt,
\end{equation}
and that
\begin{equation} \label{eq:Nc core2}
t_p(X_p)^*t_q(X_q)\subseteq [t_{r}(X_{r})t_{s}(X_{s})^*] \text{ for } wP=pP\cap qP \text{ and } r=p^{-1}w, s=q^{-1}w.
\end{equation}
Consequently, Nica covariance does not depend on the choice of a right least common multiple and also
 $\ca(t)$ admits a Wick ordering in the sense that
\[
\ca(t) = \ol{\spn} \{t_p(X_p) t_q(X_q)^* \mid p, q \in P\}.
\]
Moreover we have $\bo{K}_{pP, t_\ast} = t^{(p)}(\K(X_p))$.
Therefore, if $\F = \{p_1 P, \dots, p_n P\}$ is a finite $\cap$-closed subset of $\J$, then we have
\[
\bo{B}_{\F, t_\ast} = \sum_{i=1}^n t^{(p_i)}(\K(X_{p_i})),
\]
irrespectively of the choice of the elements $p_1, \dots, p_n$.
It is implicit in \cite{DKKLL20}, and proven in a more general context in \cite[Theorem 6.4]{KKLL21b}, that the cores $\bo{K}_{p_iP, \la_\ast}$ for distinct $p_i P$'s are linearly independent in $\bo{B}_{\F, \la_\ast}$.

It is shown in \cite[Proposition 3.5]{CLSV11} that $\N\T(X)$ admits a coaction since Nica covariance is a graded relation.
Let $\N X$ be the induced Fell bundle in $\N\T(X)$. 
By \cite[Proposition 4.3]{DKKLL20} we have
\[
\N\T(X) \simeq \ca_{\max}(\N X)
\qand
\T_{\la}(X) \simeq \ca_\la(\N X),
\]
by canonical $*$-isomorphisms.
In particular the second $*$-isomorphism induces a Fell bundle isomorphism between $\N X$ and $\F\C X$.
This shows that a representation of $X$ is Fock covariant if and only if it is Nica covariant.
Below we will give an alternative proof that squares with our characterisation of Fock covariance.
We will need the following proposition.

\begin{proposition}\label{P:Nc mt}
Let $P$ be a unital right LCM semigroup that is a subsemigroup of a discrete group $G$ and let $X$ be a compactly aligned product system over $P$ in the sense of Fowler. 
If $t$ is a Nica covariant representation of $X$, then
\[ 
\left(t_{p_1}(X_{p_1})^*\right)^\eps t_{q_1}(X_{q_1})\cdots t_{p_n}(X_{p_n})^*t_{q_n}(X_{q_n})^{\eps'}=(0)
\]
for $p_1, q_1, \dots, p_n, q_n \in  P$ and $\eps, \eps' \in \{0,1\}$ such that $q_n^{-\eps'} p_n \dots q_1^{-1} p_1^{\eps} P=\mt$. In particular, we have
$\bo{K}_{\mt,t_\ast}=(0)$.
\end{proposition}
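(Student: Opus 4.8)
The plan is to establish the vanishing first inside the universal Nica covariant C*-algebra $\N\T(X)$, and then transport it to $\ca(t)$ along the canonical $*$-epimorphism $\N\T(X)\to\ca(t)$ that exists because $t$ is Nica covariant. Recall from \cite[Proposition 4.3]{DKKLL20} and the discussion preceding the statement that there are canonical $*$-isomorphisms $\N\T(X)\simeq\ca_{\max}(\N X)$ and $\T_\la(X)\simeq\ca_\la(\N X)$, where $\N X$ is the Fell bundle induced by the coaction on $\N\T(X)$; in particular $\la$ is Nica covariant, and under these identifications the induced map $\la_\ast\colon\N\T(X)\to\T_\la(X)$ becomes the canonical $*$-epimorphism $\ca_{\max}(\N X)\to\ca_\la(\N X)$.

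The first step is to note that $\la_\ast\colon\N\T(X)\to\T_\la(X)$ is injective on every graded fibre $[\N\T(X)]_g$. It is injective on the unit fibre $[\N\T(X)]_e$ — this is the standard faithfulness of the reduced norm on the unit fibre of a Fell bundle, which is precisely what makes $\ca_\la(\N X)$ the reduced C*-algebra — and then the $C^*$-trick from the proof of Proposition \ref{P:eq dgrm}, namely that $x\in\ker\la_\ast\cap[\N\T(X)]_g$ forces $x^*x\in\ker\la_\ast\cap[\N\T(X)]_e=(0)$, gives injectivity on $[\N\T(X)]_g$ for all $g\in G$.

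The second step is a computation on the Fock side. Write $\nu=\{\nu_p\}_{p\in P}$ for the universal Nica covariant representation, so that $\N\T(X)=\ca(\nu)$, $\la_\ast\circ\nu_p=\la_p$ and $t_\ast\circ\nu_p=t_p$. Fix $p_1,q_1,\dots,p_n,q_n\in P$ and $\eps,\eps'\in\{0,1\}$ with $\Bx:=q_n^{-\eps'}p_n\dots q_1^{-1}p_1^{\eps}P=\mt$, and pick $\xi_{p_i}\in X_{p_i}$, $\xi_{q_i}\in X_{q_i}$. Since every generator is homogeneous, the element
\[
W:=\left(\nu_{p_1}(\xi_{p_1})^*\right)^{\eps}\nu_{q_1}(\xi_{q_1})\cdots\nu_{p_n}(\xi_{p_n})^*\nu_{q_n}(\xi_{q_n})^{\eps'}
\]
lies in the fibre $[\N\T(X)]_g$ with $g=p_1^{-\eps}q_1\cdots p_n^{-1}q_n^{\eps'}$. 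Its image
\[
\la_\ast(W)=\left(\la_{p_1}(\xi_{p_1})^*\right)^{\eps}\la_{q_1}(\xi_{q_1})\cdots\la_{p_n}(\xi_{p_n})^*\la_{q_n}(\xi_{q_n})^{\eps'}
\]
satisfies $\la_\ast(W)\,\xi_r=0$ for every $r\in P$ and $\xi_r\in X_r$ by Proposition \ref{P:fc mt}, since no $r\in P$ lies in $\Bx=\mt$; hence $\la_\ast(W)=0$ in $\B(\F X)$. By the first step $W=0$ in $\N\T(X)$, and applying $t_\ast$ gives $\left(t_{p_1}(\xi_{p_1})^*\right)^{\eps}t_{q_1}(\xi_{q_1})\cdots t_{p_n}(\xi_{p_n})^*t_{q_n}(\xi_{q_n})^{\eps'}=0$. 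As the $\xi_{p_i},\xi_{q_i}$ range over $X_{p_i},X_{q_i}$ this is the asserted identity; and $\bo{K}_{\mt,t_\ast}$, being by definition the closed linear span of such products subject to the extra normalisation $p_1^{-\eps}q_1\cdots p_n^{-1}q_n^{\eps'}=e$, is therefore $(0)$.

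The only point requiring care is the fibrewise injectivity in the first step, i.e.\ faithfulness of the reduced norm on the unit fibre of $\N X$ — a standard fact in the theory of Fell bundles, already implicit in the identification $\T_\la(X)\simeq\ca_\la(\N X)$ we invoked. If one wishes to avoid it, there is a self-contained induction on $n$: reorder the left-most subword of the form $t_p(X_p)^*t_q(X_q)$ using the Wick relations (\ref{eq:Nc core1})--(\ref{eq:Nc core2}) — it either vanishes (when $pP\cap qP=\mt$, and then so does the whole word) or becomes $[t_{a}(X_{a})t_{b}(X_{b})^*]$ with $a=p^{-1}w$, $b=q^{-1}w$ for $wP=pP\cap qP$ — and then absorb the newly adjacent homogeneous factors through $t_c(X_c)t_d(X_d)\subseteq t_{cd}(X_{cd})$. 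Each step either produces $(0)$ or shortens the word while leaving its constructible ideal unchanged; since a nonzero Wick-ordered word $[t_a(X_a)t_b(X_b)^*]$ has constructible ideal $bP\neq\mt$, a word with constructible ideal $\mt$ must vanish before the procedure terminates. Checking that the reordering step preserves the constructible ideal is the main bookkeeping hurdle along that route.
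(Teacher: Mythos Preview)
Your main argument is correct and takes a genuinely different route from the paper. You work through the Fell bundle: invoking \cite[Proposition 4.3]{DKKLL20} to identify $\la_\ast\colon\N\T(X)\to\T_\la(X)$ with the canonical map $\ca_{\max}(\N X)\to\ca_\la(\N X)$, which is isometric on each fibre $[\N\T(X)]_g$; then Proposition~\ref{P:fc mt} kills $\la_\ast(W)$, hence $W=0$ in $\N\T(X)$, and applying $t_\ast$ gives the result. This is clean and conceptual.

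The paper instead carries out precisely the ``self-contained induction'' you sketch in your final paragraph. Working entirely inside $\ca(t)$ and using only the Wick relations (\ref{eq:Nc core1})--(\ref{eq:Nc core2}), it rewrites from the right: if $p_nP\cap q_nP=\mt$ we are done, otherwise $t_{p_n}(X_{p_n})^*t_{q_n}(X_{q_n})\subseteq[t_{r_1}(X_{r_1})t_{s_1}(X_{s_1})^*]$, the adjacent factors are absorbed, and one iterates. The bookkeeping you flag as ``the main hurdle'' is exactly where the paper spends its effort: it tracks the auxiliary elements $w_k,r_k,s_k$ explicitly and closes with a contradiction showing that if the process never produces an empty intersection along the way, the original constructible ideal could not have been empty. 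The remaining $(\eps,\eps')$ cases are reduced to the principal one.

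The trade-off: your argument is shorter and more transparent, but it leans on \cite[Proposition 4.3]{DKKLL20}, which already encodes the Fock/Nica equivalence the paper is about to re-derive (see the Corollary after Proposition~\ref{P:nc is fc}). There is no logical circularity --- the paper cites that result as established --- but the paper's direct proof is deliberately independent of it, so that Proposition~\ref{P:Nc mt} can feed into an \emph{alternative} proof of that equivalence via the characterisation in Theorem~\ref{T:Fock cov}. Your approach gives a valid proof of the proposition as stated; the paper's approach buys self-containment relative to \cite{DKKLL20}.
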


\begin{proof}
We first consider the case $\eps=\eps'=1$.
In this case we have $q_n^{-1}p_n\dots q_1^{-1}p_1P=\mt$ and we wish to show that
\[
t_{p_1}(X_{p_1})^*t_{q_1}(X_{q_1})\cdots t_{p_n}(X_{p_n})^*t_{q_n}(X_{q_n})=(0).
\]
We proceed in steps.
If $p_n P\cap q_n P=\mt$, then Nica covariance of $t$ implies that
\[
t_{p_n}(X_{p_n})^*t_{q_n}(X_{q_n})=(0)
\]
by equation (\ref{eq:Nc core1}), which gives the desired result. 
If $p_n P\cap q_n P\not=\mt$, then choose $w_1 \in P$ with $p_nP\cap q_nP  = w_1P$, and (\ref{eq:Nc core2}) implies that
\[
t_{p_n}(X_{p_n})^*t_{q_n}(X_{q_n})
\subseteq 
[t_{r_1}(X_{r_1})t_{s_1}(X_{s_1})^*] 
\qfor
r_1=p_n^{-1}w_1, 
s_1=q_n^{-1}w_1.
\]
Hence we obtain the inclusion
\begin{align*}
&t_{p_1}(X_{p_1})^*t_{q_1}(X_{q_1})\cdots t_{p_n}(X_{p_n})^*t_{q_n}(X_{q_n}) \subseteq \\
& \hspace{3cm} \subseteq
[t_{p_1}(X_{p_1})^*t_{q_1}(X_{q_1})\cdots 
t_{p_{n-1}}(X_{p_{n-1}})^*t_{q_{n-1}}(X_{q_{n-1}})
t_{r_1}(X_{r_1})t_{s_1}(X_{s_1})^*]\\
& \hspace{3cm} \subseteq
[t_{p_1}(X_{p_1})^*t_{q_1}(X_{q_1})\cdots 
t_{p_{n-1}}(X_{p_{n-1}})^*
t_{q_{n-1}r_1}(X_{q_{n-1}r_1})t_{s_1}(X_{s_1})^*].
\end{align*}
If $p_{n-1} P\cap q_{n-1}r_1 P=\mt$, then Nica covariance gives the desired result as
\[
t_{p_{n-1}}(X_{p_{n-1}})^*
t_{q_{n-1}r_1}(X_{q_{n-1}r_1}) = (0).
\]
If $p_{n-1} P\cap q_{n-1}r_1 P\not=\mt$, then choose $w_2 \in P$ such that $p_{n-1}P\cap q_{n-1}r_1P = w_2P$, and (\ref{eq:Nc core2}) implies that
\[
t_{p_{n-1}}(X_{p_{n-1}})^*t_{q_{n-1}r_1}(X_{q_{n-1}r_1})\subseteq [t_{r_2}(X_{r_2})t_{s_2}(X_{s_2})^*]
\qfor
r_2=p_{n-1}^{-1}w_2,
s_2=r_1^{-1}q_{n-1}^{-1}w_2.
\]
Hence we obtain the inclusion
\begin{align*}
&t_{p_1}(X_{p_1})^*t_{q_1}(X_{q_1})\cdots t_{p_n}(X_{p_n})^*t_{q_n}(X_{q_n}) \subseteq \\
& \hspace{3cm} \subseteq
[t_{p_1}(X_{p_1})^*t_{q_1}(X_{q_1})\cdots 
t_{p_{n-1}}(X_{p_{n-1}})^*
t_{q_{n-1}r_1}(X_{q_{n-1}r_1})t_{s_1}(X_{s_1})^*]\\
& \hspace{3cm} \subseteq 
[t_{p_1}(X_{p_1})^*t_{q_1}(X_{q_1})\cdots 
t_{q_{n-2}}(X_{q_{n-2}})
t_{r_2}(X_{r_2})t_{s_2}(X_{s_2})^*t_{s_1}(X_{s_1})^*]\\
& \hspace{3cm} \subseteq 
[t_{p_1}(X_{p_1})^*t_{q_1}(X_{q_1})\cdots 
t_{p_{n-2}}(X_{p_{n-2}})^*
t_{q_{n-2}r_2}(X_{q_{n-2}r_2})t_{s_1s_2}(X_{s_1s_2})^*].
\end{align*}
Continuing inductively we either get a zero space at a step, or we have obtained elements $w_k, r_k, s_k \in P$, with $k=2,\dots,n-1$, such that
\begin{align*}
w_kP & = p_{n-(k-1)}P \cap q_{n-(k-1)}r_{k-1} P, \,
r_k = p_{n-(k-1)}^{-1}w_k, \,
s_k = r_{k-1}^{-1}q_{n-(k-1)}^{-1}w_k,
\end{align*}
and the inclusion
\[
t_{p_1}(X_{p_1})^*t_{q_1}(X_{q_1})\cdots t_{p_n}(X_{p_n})^*t_{q_n}(X_{q_n})
\subseteq
[t_{p_1}(X_{p_1})^*t_{q_1r_{n-1}}(X_{q_1r_{n-1}})t_{s_1\cdots s_{n-1}}(X_{s_1\cdots s_{n-1}})^*].
\]
In the latter case we claim that
\[
p_1P \cap q_1 r_{n-1}P = \mt,
\]
and so Nica covariance will imply the required identity.
In order to reach a contradiction suppose that $p_1P \cap q_1 r_{n-1}P \neq \mt$, so there exists a $q \in P$ such that $p := r_{n-1}^{-1} q_1^{-1} p_1 q \in P$.
By the construction of the $r_k$ and the $s_k$ for $k=1, 2, \dots, n-1$ we have
\begin{align*}
s_1 & = q_n^{-1}w_1 = q_n^{-1} p_n r_1 \\
s_2 & = r_1^{-1} q_{n-1}^{-1} w_2 = r_1^{-1} q_{n-1}^{-1} p_{n-1} r_2 \\
& \vdots \\
s_k & = r_{k-1}^{-1} q_{n-(k-1)}^{-1} w_k = r_{k-1}^{-1} q_{n-(k-1)}^{-1} p_{n-(k-1)} r_k \\
& \vdots \\
s_{n-1} & = r_{n-2}^{-1} q_{2}^{-1} w_{n-1} = r_{n-2}^{-1} q_2^{-1} p_2 r_{n-1}.
\end{align*}
Therefore for every $k=1, 2 \dots, n-1$ we have
\begin{align*}
s_1 s_2 \cdots s_k \cdots s_{n-1} p
& =
q_n^{-1} p_n q_{n-1}^{-1} p_{n-1} \cdots q_{n-(k-1)}^{-1} p_{n-(k-1)} \cdot \left(r_k s_{k+1} \cdots s_{n-1} p \right)\\
& \in 
q_n^{-1} \cdot p_n \cdots q_{n-(k-1)}^{-1} \cdot p_{n-(k-1)} \cdot P.
\end{align*}
Because of the choice of $p = r_{n-1}^{-1} q_1^{-1} p_1 q$ we also have
\begin{align*}
s_1  \cdots s_{n-2}s_{n-1} p
& =
q_n^{-1} p_n \cdots q_2^{-1} p_2 r_{n-1} \cdot \left(r_{n-1}^{-1} q_1^{-1} p_1 q\right) 
\in 
q_n^{-1} \cdot p_n \cdots q_1^{-1} \cdot p_1 \cdot P.
\end{align*}
Therefore we have $s_1  \cdots s_{n-2} s_{n-1} p \in q_n^{-1}p_n \dots q_1^{-1} p_1 P = \mt$ which is a contradiction, and the proof for the first case is complete.

Suppose now that $\eps=1$ and $\eps'=0$. We have $p_n q_{n-1}^{-1}\dots q_1^{-1}p_1P=\mt$ and we wish to show that
\[
t_{p_1}(X_{p_1})^*t_{q_1}(X_{q_1})\cdots t_{q_{n-1}}(X_{q_{n-1}})t_{p_n}(X_{p_n})^*=(0).
\]
Note that we must also have $q_{n-1}^{-1}p_{n-1}\dots q_1^{-1}p_1P=\mt$, and therefore the previous case implies that
\[
t_{p_1}(X_{p_1})^*t_{q_1}(X_{q_1})\cdots t_{p_{n-1}}(X_{p_{n-1}})^* t_{q_{n-1}}(X_{q_{n-1}})=(0),
\]
which completes the proof of this case.

Suppose now that $\eps=0$ and $\eps'=1$. We have $q_n^{-1}p_n\dots p_2q_1^{-1}P=\mt$ and we wish to show that
\[
t_{q_1}(X_{q_1})t_{p_2}(X_{p_2})^*\cdots t_{p_n}(X_{p_n})^*t_{q_n}(X_{q_n})=(0).
\]
Since $q_n^{-1}p_n\dots q_2^{-1}p_2P=q_n^{-1}p_n\dots p_2q_1^{-1}P=\mt$, the first case implies that
\[
t_{p_2}(X_{p_2})^*t_{q_2}(X_{q_2})\cdots t_{p_n}(X_{p_n})^*t_{q_n}(X_{q_n})=(0),
\]
which completes the proof of this case.

Finally, suppose that $\eps=\eps'=0$. We have $p_nq_{n-1}^{-1}\dots p_2q_1^{-1}P=\mt$ and we wish to show that
\[
t_{q_1}(X_{q_1})t_{p_2}(X_{p_2})^*\cdots t_{q_{n-1}}(X_{q_{n-1}}) t_{p_n}(X_{p_n})^*=(0).
\]
Since $p_nq_{n-1}^{-1}\dots q_2^{-1}p_2P=p_nq_{n-1}^{-1}\dots p_2q_1^{-1}P=\mt$, the second case implies that
\[
t_{p_2}(X_{p_2})^*t_{q_2}(X_{q_2})\cdots t_{q_{n-1}}(X_{q_{n-1}}) t_{p_n}(X_{p_n})^*=(0),
\]
and the proof is complete.
\end{proof}

We now pass to the connection between Fock covariance and Nica covariance through our characterisation.

\begin{proposition} \label{P:nc is fc}
Let $P$ be a unital right LCM semigroup that is a subsemigroup of a discrete group $G$ and let $X$ be a compactly aligned product system over $P$ in the sense of Fowler.
An equivariant injective representation of $X$ is Fock covariant if and only if it is Nica covariant.
\end{proposition}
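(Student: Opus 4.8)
The plan is to read off both implications from the characterisation in Theorem~\ref{T:Fock cov}, using as a pivot the fact that the Fock representation $\la$ is itself Nica covariant. For the latter I would note that, when $X$ is compactly aligned and $pP\cap qP=wP$, the proof of Proposition~\ref{P:fock-nica} already produces the precise identity $\la^{(p)}(k_p)\la^{(q)}(k_q)=\la^{(w)}(i_p^w(k_p)i_q^w(k_q))$, while if $pP\cap qP=\mt$ a one-line computation on $\F X$ gives $\la^{(p)}(k_p)\la^{(q)}(k_q)=0$ (any nonzero $\la^{(q)}(k_q)\xi_r$ sits in $X_{qr}$ with $qr\in qP$, hence $qr\notin pP$, so $\la^{(p)}(k_p)$ annihilates it). Since $P$ is right LCM we also recall $\J=\{pP\mid p\in P\}\cup\{\mt\}$, that $\bo{K}_{pP,t_\ast}=t^{(p)}(\K(X_p))$ for a Nica covariant $t$, and that $\bo{K}_{\mt,t_\ast}=(0)$ by Proposition~\ref{P:Nc mt}.

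For ``Nica covariant $\Rightarrow$ Fock covariant'', let $t$ be equivariant, injective and Nica covariant; I would verify conditions (i) and (ii) of Theorem~\ref{T:Fock cov}. Condition (i) is exactly Proposition~\ref{P:Nc mt}. For (ii), a finite $\cap$-closed $\F\subseteq\J$ with $\bigcup_i\bo{x}_i\neq\mt$ has the form $\{p_1P,\dots,p_nP\}$ possibly together with $\mt$; the $\mt$-term does not enter the union and is killed by $t_\ast$, so I may take $\F=\{p_1P,\dots,p_nP\}$ with the $p_iP$ pairwise distinct and write $t_\ast(b_{p_iP})=t^{(p_i)}(k_i)$ using $\bo{K}_{p_iP,t_\ast}=t^{(p_i)}(\K(X_{p_i}))$. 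Then I would induct on $n$: choose $p_1P$ maximal for inclusion among the $p_iP$; since $p_1\in p_iP$ precisely when $p_1P\subseteq p_iP$, evaluating the hypothesis of (ii) at $r=p_1$ collapses the sum to the single term $t^{(p_1)}(k_1)t_{p_1}(\xi_{p_1})=t_{p_1}(k_1\xi_{p_1})=0$ for all $\xi_{p_1}\in X_{p_1}$, so $k_1=0$ by injectivity of $t_{p_1}$ and $t_\ast(b_{p_1P})=0$. Removing $p_1P$ leaves a $\cap$-closed family (maximality of $p_1P$ prevents any intersection of the remaining ideals from equalling $p_1P$) that still satisfies the hypothesis of (ii), since the dropped $i=1$ terms already vanish, and the inductive step gives $\sum_{i\geq 2}t_\ast(b_{p_iP})=0$, hence $\sum_{i=1}^n t_\ast(b_{p_iP})=0$.

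For ``Fock covariant $\Rightarrow$ Nica covariant'', let $t$ be equivariant, injective and Fock covariant; then $t_\ast$ factors through $q_{\cov}^{\fock}$, so $\ker\la_\ast\cap[\T(X)]_g\subseteq\ker t_\ast\cap[\T(X)]_g$ for every $g\in G$. If $pP\cap qP=\mt$, then $\la_p(\xi_p)^*\la_q(\xi_q)=0$ by Proposition~\ref{P:fc mt}, so $\hat{t}_p(\xi_p)^*\hat{t}_q(\xi_q)\in\ker\la_\ast\cap[\T(X)]_{p^{-1}q}\subseteq\ker t_\ast$, giving $t_p(\xi_p)^*t_q(\xi_q)=0$ and hence $t^{(p)}(k_p)t^{(q)}(k_q)=0$. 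If $pP\cap qP=wP$, then both $\hat{t}^{(p)}(k_p)\hat{t}^{(q)}(k_q)\in\bo{K}_{pP,\hat{t}_\ast}\cdot\bo{K}_{qP,\hat{t}_\ast}\subseteq\bo{K}_{wP,\hat{t}_\ast}$ and $\hat{t}^{(w)}(i_p^w(k_p)i_q^w(k_q))\in\bo{K}_{wP,\hat{t}_\ast}$, so $\zeta:=\hat{t}^{(p)}(k_p)\hat{t}^{(q)}(k_q)-\hat{t}^{(w)}(i_p^w(k_p)i_q^w(k_q))$ lies in $[\T(X)]_e$ and satisfies $\la_\ast(\zeta)=0$ by the Nica covariance of $\la$; thus $\zeta\in\ker t_\ast$ and $t^{(p)}(k_p)t^{(q)}(k_q)=t^{(w)}(i_p^w(k_p)i_q^w(k_q))$, so $t$ is Nica covariant.

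I expect the main obstacle to be the combinatorial bookkeeping in the inductive step of the first implication, namely checking that deleting a maximal principal ideal from a finite $\cap$-closed subfamily of $\J$ keeps it $\cap$-closed and preserves the hypothesis of condition~(ii). The second implication, by contrast, is essentially a matter of recognising each defining relation of Nica covariance as the image under $t_\ast$ of an element of $\ker\la_\ast$ inside the appropriate homogeneous fibre of $\T(X)$, which is immediate once $\la$ is known to be Nica covariant.
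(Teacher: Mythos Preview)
Your proposal is correct, and both directions take genuinely different routes from the paper.

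For the implication ``Fock covariant $\Rightarrow$ Nica covariant'', the paper works entirely through the characterisation of Theorem~\ref{T:Fock cov}: it shows via Proposition~\ref{P:taut} that the element $t_\ast(b_{\Bx})-t^{(w)}(k_w)$ lies in $\bo{K}_{\Bx,t_\ast}$ and annihilates every $t_r(\xi_r)$ for $r\in\Bx$, and then invokes condition~(ii) on the singleton $\F=\{\Bx\}$. Your approach is shorter: once $\la$ is known to be Nica covariant, each Nica relation is a specific element of $\ker\la_\ast$ inside a homogeneous fibre of $\T(X)$, and the fibrewise kernel inclusion $\ker\la_\ast\cap[\T(X)]_g\subseteq\ker t_\ast$ transfers it to $t$ immediately. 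This buys brevity at the cost of going slightly outside the Theorem~\ref{T:Fock cov} framework.

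For the implication ``Nica covariant $\Rightarrow$ Fock covariant'', both proofs verify condition~(ii), but the mechanisms differ. The paper factors through $\N\T(X)$, produces $k_{p_i}\in\K(X_{p_i})$ with $q_{\cov}^{\rm N}(b_{\Bx_i})=\tilde{t}^{(p_i)}(k_{p_i})$, shows $\sum_i\la^{(p_i)}(k_{p_i})=0$, and then appeals to the linear independence of the Fock cores $\bo{K}_{p_iP,\la_\ast}$ from \cite[Theorem 6.4]{KKLL21b} to conclude each $k_{p_i}=0$. Your induction peels off a maximal $p_1P$, uses that $p_1\in p_iP$ forces $i=1$, and reads off $k_1=0$ directly from the hypothesis at $r=p_1$; this is more elementary and avoids the external citation. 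Two remarks on your bookkeeping: first, the $\cap$-closure you worry about preserving is not actually used anywhere in your induction (you only need maximality to isolate a single term and the already-established $t_\ast(b_{p_1P})=0$ to pass to the smaller family), so the concern is moot; second, when you discard $\mt$ from $\F$ you may lose $\cap$-closure (some $p_iP\cap p_jP$ could be empty), but again this does not affect your argument since the induction never invokes that hypothesis.
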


\begin{proof}
Fix the representation $\hat{t}$  of $X$ such that $\T(X)=\ca(\hat{t})$.
First suppose that $t$ is an equivariant Fock covariant injective representation of $X$, so that it satisfies conditions (i) and (ii) of Theorem \ref{T:Fock cov}.
Fix $p, q \in P$.
We will show that
\[
t_p(\xi_p) t_p(\eta_p)^* t_q(\xi_q) t_q(\eta_q)^*
=
\begin{cases}
t^{(w)} \left( i_p^w\left(\theta^{X_p}_{\xi_p,\eta_p}\right) i_q^w\left(\theta^{X_q}_{\xi_q,\eta_q}\right) \right) & \text{if } pP\cap qP=wP, \\
0 & \text{if } pP\cap qP=\mt,
\end{cases}
\]
for all $\xi_p,\eta_p\in X_p$ and $\xi_q,\eta_q\in X_q$.
Towards this end, set
\[
\bo{x} := q q^{-1}p p^{-1} P = pP\cap qP.
\]
If $\bo{x}=\mt$, then condition (i) of Theorem \ref{T:Fock cov} implies that
\[
t_p(\xi_p)t_p(\eta_p)^*t_q(\xi_q)t_q(\eta_q)^*
\in
\bo{K}_{\mt, t_\ast} = (0),
\]
as required.
On the other hand, if $\bo{x}=wP$ for some $w\in P$ set
\[
b_{\Bx} := \hat{t}_p(\xi_p) \hat{t}_p(\eta_p)^* \hat{t}_q(\xi_q) \hat{t}_q(\eta_q)^*
\qand
k_w := i_p^w\left(\theta^{X_p}_{\xi_p,\eta_p}\right) i_q^w\left(\theta^{X_q}_{\xi_q,\eta_q}\right) \in \K(X_w).
\]
We have to show that $t_\ast(b_{\Bx}) = t^{(w)}\left( k_w \right)$.
We will use the equality $t^{(w)}(k_w) = t_\ast(\hat{t}^{(w)}( k_w))$.
Since $\Bx = qq^{-1}pp^{-1}P$ we have $b_{\Bx} \in \bo{K}_{\Bx, \hat{t}_\ast}$.
Moreover, since $\Bx = wP = ww^{-1}P$ we have
\[
\hat{t}^{(w)}(k_w) \in [\hat{t}_w(X_w)\hat{t}_w(X_w)^*]\subseteq \bo{K}_{\bo{x},\hat{t}_\ast},
\]
and therefore
\[
t_\ast(b_{\Bx})-t_\ast(\hat{t}^{(w)}( k_w)) \in \bo{K}_{\Bx, t_\ast}.
\]
Fix $r \in \bo{x}$.
Then we have
\begin{align*}
\la_\ast(b_{\Bx})\xi_r
& =
\la^{(p)}\left(\theta^{X_p}_{\xi_p,\eta_p}\right) \la^{(q)}\left(\theta^{X_q}_{\xi_q,\eta_q}\right)\xi_r
=
i_p^r\left(\theta^{X_p}_{\xi_p,\eta_p}\right) i_q^r\left(\theta^{X_q}_{\xi_q,\eta_q}\right)\xi_r
=
i_w^r\left( k_w \right) \xi_r,
\end{align*}
and thus by Proposition \ref{P:taut} we have
\[
t_\ast(b_{\Bx})t_r(\xi_r)
=
t_r ( \la_\ast(b_{\Bx})\xi_r)
=
t_r \left( i_w^r \left( k_w \right) \xi_r \right).
\]
On the other hand, by Proposition \ref{P:taut} we have
\begin{align*}
t^{(w)}(k_w)t_r(\xi_r)
=
t_\ast(\hat{t}^{(w)}(k_w)) t_r(\xi_r)
=
t_r(\la_\ast(\hat{t}^{(w)}(k_w))\xi_r)
=
t_r(\la^{(w)}(k_w)\xi_r)
=
t_r(i_w^r(k_w)\xi_r).
\end{align*}
Therefore we conclude that
\[
\left( t_\ast(b_{\Bx})-t_\ast(\hat{t}^{(w)}( k_w)) \right) t_r(\xi_r) 
= 
\left( t_\ast(b_{\Bx})-t^{(w)}(k_w) \right) t_r(\xi_r) 
=
0
\foral
r \in \Bx.
\]
Condition (ii) for $t$ then implies that $t_\ast(b_{\Bx}) = t_\ast(\hat{t}^{(w)}( k_w)) = t^{(w)}(k_w)$, as required.

For the converse, suppose that $t$ is an equivariant Nica covariant injective representation of $X$, and we will show that $t$ satisfies conditions (i) and (ii) of Theorem \ref{T:Fock cov}.
The fact that $t$ satisfies condition (i) is already verified in Proposition \ref{P:Nc mt}. 
We will show that $t$ satisfies condition (ii).
In order to fix notation let the commutative diagrams of canonical $*$-epimorphisms
\[
\xymatrix{
\T(X) \ar[rr]^{\la_\ast} \ar[dr]_{q_{\cov}^{\rm N}} & & \T_\la(X) \\
& \N\T(X) \ar[ur]_{\dot{\la}} &
}
\qand
\xymatrix{
\T(X) \ar[rr]^{t_\ast} \ar[dr]_{q_{\cov}^{\rm N}} & & \ca(t) \\
& \N\T(X) \ar[ur]_{\dot{t}} &
}
\]
induced by $\la$ and $t$, and let $\tilde{t}$ be the representation of $X$ such that $\N\T(X) = \ca(\tilde{t})$.
We will use the following note for an element $b \in \bo{K}_{pP, \hat{t}_\ast}$. 
Since
\[
q_{\cov}^{\rm{N}}(b) \in q_{\cov}^{\rm N}(\bo{K}_{pP, \hat{t}_\ast}) = \bo{K}_{pP, \tilde{t}_\ast} = \tilde{t}^{(p)}(\K(X_p)),
\]
there exists a $k_p \in \K(X_p)$ such that $q_{\cov}^{\rm{N}}(b)=\tilde{t}^{(p)}(k_p)$.
In particular we have
\[
t_\ast(b) 
= 
\dot{t} \circ q_{\cov}^{\rm{N}}(b)
=
\dot{t}(\tilde{t}^{(p)}(k_p))
=
t^{(p)}(k_p).
\]
Likewise we have
\[
\la_\ast(b) 
= 
\dot{\la} \circ q_{\cov}^{\rm{N}}(b)
=
\dot{\la}(\tilde{t}^{(p)}(k_p))
=
\la^{(p)}(k_p).
\]
 
For condition (ii), first let $\F=\{\bo{x}_1,\dots,\bo{x}_n\}$ be a $\cap$-closed subset of $\J$ where $\bo{x}_i=p_iP$ and $b_{\bo{x}_i}\in\bo{K}_{\bo{x}_i,\hat{t}_\ast}$ for $i=1,\dots,n$.
Note that without loss of generality we may suppose that $\Bx_i\not=\Bx_j$ for $i\not=j$ and hence the C*-subalgebras $\bo{K}_{p_iP,\la_\ast}$'s are linearly independent in $\bo{B}_{\F,\la_\ast}$, by \cite[Theorem 6.4]{KKLL21b}.
Suppose that
\[
\sum_{i:r\in \bo{x}_i}t_\ast(b_{\bo{x}_i})t_r(X_r)=(0)
\foral
r\in\bigcup_{i=1}^n\bo{x}_i.
\]
We will show that $q_{\cov}^{\rm{N}}(b_{\bo{x}_i}) = 0$ for all $i=1, \dots, n$, and thus
\[
\sum_{i=1}^n t_\ast(b_{\bo{x}_i}) = \sum_{i=1}^n \dot{t}\left( q_{\cov}^{\rm{N}}(b_{\bo{x}_i}) \right) = 0.
\]
From the comments above, for each $i=1,\dots, n$ we may pick a $k_{p_i} \in \K(X_{p_i})$ such that $q_{\cov}^{\rm{N}} (b_{\bo{x}_i}) = \tilde{t}^{(p_i)}(k_{p_i})$.
Then $\la_\ast(b_{\Bx_i}) = \la^{(p_i)}(k_{p_i})$ for all $i=1, \dots, n$, and thus for each $r\in\bigcup_{i=1}^n\bo{x}_i$ we have
\begin{align*}
t_r\left(\sum_{i=1}^n\la^{(p_i)}(k_{p_i})\xi_r\right)
=
\sum_{i:r\in\bo{x}_i}t_r(\la^{(p_i)}(k_{p_i})\xi_r)
=
\sum_{i:r\in\bo{x}_i}t_r(\la_\ast(b_{\bo{x}_i})\xi_r)
=
\sum_{i:r\in\bo{x}_i}t_\ast(b_{\bo{x}_i})t_r(\xi_r)
= 
0,
\end{align*}
by using Proposition \ref{P:taut}.
Injectivity of $t_r$ now yields $\sum_{i=1}^n\la^{(p_i)}(k_{p_i})\xi_r=0$ for all $r\in\bigcup_{i=1}^n\bo{x}_i$.
On the other hand  we have $\la^{(p_i)}(k_{p_i})\xi_r=0$ for all $r\not\in\bigcup_{i=1}^n\bo{x}_i$ and $i=1,\dots, n$, and therefore we obtain
\[
\sum_{i=1}^n\la^{(p_i)}(k_{p_i})=0.
\]
As the $\bo{K}_{\bullet}$-cores are linearly independent by \cite[Theorem 6.4]{KKLL21b}, we deduce that $\la^{(p_i)}(k_{p_i})=0$ for all $i=1, \dots, n$,
and thus $k_{p_i}=0$ for all $i=1, \dots, n$, from the injectivity of $\la^{(p_i)}$. 
In particular we have
\[
q_{\cov}^{\rm{N}}(b_{\bo{x}_i}) = \tilde{t}^{(p_i)}(k_{p_i}) = 0
\foral
i=1, \dots, n,
\]
as required.
To finish the proof, next let $\F =\{\Bx_1, \dots, \Bx_n\} \cup \{\mt\}$ such that $\Bx_i\not= \Bx_j$ for $i\not = j$, and $\Bx_i = p_i P$ for some $p_i \in P$.
Let $b_{\mt}, b_{\Bx_1}, \dots, b_{\Bx_n}$ where $b_{\mt}\in \bo{K}_{\mt,\hat{t}_\ast}$ and $b_{\Bx_i}\in \bo{K}_{\Bx_i,\hat{t}_\ast}$ for $i=1, \dots, n$, such that 
\[
\sum_{i : r \in \Bx_i} t_\ast(b_{\Bx_i}) t_r(X_r) = (0)
\foral
r \in \mt \cup \left(\bigcup_{i=1}^n \Bx_i\right) = \bigcup_{i=1}^n \Bx_i.
\]
Then the previous arguments show that $q_{\cov}^{\rm{N}}(b_{\Bx_i}) = 0$ for all $i=1, \dots, n$.
Moreover by Proposition \ref{P:Nc mt} for $\tilde{t}$ we also have $q_{\cov}^{\rm{N}}(b_\mt) = 0$.
Hence we get
\[
t_\ast(b_\mt) + \sum_{i=1}^n t_\ast(b_{\bo{x}_i}) 
= 
\dot{t}(q_{\cov}^{\rm{N}}(b_\mt)) + \sum_{i=1}^n \dot{t}( q_{\cov}^{\rm{N}}(b_{\bo{x}_i}) ) 
= 
0,
\]
as required, and the proof is complete.
\end{proof}

As a corollary we have an alternative proof of \cite[Proposition 4.3]{DKKLL20}.

\begin{corollary}
Let $P$ be a unital right LCM semigroup that is a subsemigroup of a discrete group $G$ and let $X$ be a compactly aligned product system over $P$ in the sense of Fowler. Then $\T_{\cov}^{\fock}(X)$ and $\N\T(X)$ are canonically isomorphic. In particular, a representation of $X$ is Fock covariant if and only if it is Nica covariant.
\end{corollary}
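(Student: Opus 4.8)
The plan is to show that $\T_{\cov}^{\fock}(X)$ and $\N\T(X)$ arise as quotients of $\T(X)$ by the \emph{same} ideal. Both quotient maps $q_{\cov}^{\fock}$ and $q_{\cov}^{\rm N}$ are equivariant, so their kernels $\J_{\cov}^{\fock} = \sca{\ker \la_\ast \cap [\T(X)]_e}$ and $\ker q_{\cov}^{\rm N}$ are induced ideals of $\T(X)$ --- the latter because Nica covariance is a graded relation, as noted earlier. Since an induced ideal is generated by its intersection with the $e$-fibre, it suffices to prove that $\ker q_{\cov}^{\fock} \cap [\T(X)]_e = \ker q_{\cov}^{\rm N} \cap [\T(X)]_e$; equivalently, by Proposition \ref{P:eq dgrm}, that each of $\la_\ast$ and $q_{\cov}^{\rm N}$ factors through the other on $\T(X)$.

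First I would record that the Fock representation $\la$ is equivariant, injective, and Fock covariant (the latter established in the construction of the Fell bundle $\F\C X$), hence Nica covariant by Proposition \ref{P:nc is fc} --- alternatively this is immediate from the computation carried out in the proof of Proposition \ref{P:fock-nica}. Therefore $\la_\ast$ factors through $q_{\cov}^{\rm N}$, and so $\ker q_{\cov}^{\rm N} \subseteq \ker \la_\ast$, which gives one of the two inclusions on the $e$-fibre. For the reverse, let $\tilde t$ be the universal Nica covariant representation, so that $\ca(\tilde t) = \N\T(X)$. It is Nica covariant by construction and equivariant since Nica covariance is graded; moreover it is injective, since $\la$ is a Nica covariant representation with $\la_e$ injective and $\la_e$ factors through $\tilde t_e$. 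Proposition \ref{P:nc is fc} then shows $\tilde t$ is Fock covariant, so $q_{\cov}^{\rm N} = \tilde t_\ast$ factors through $q_{\cov}^{\fock}$, whence $\ker q_{\cov}^{\fock} \subseteq \ker q_{\cov}^{\rm N}$; intersecting with the $e$-fibre and using $\ker q_{\cov}^{\fock} \cap [\T(X)]_e = \ker \la_\ast \cap [\T(X)]_e$ yields the remaining inclusion.

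Combining the two inclusions, $\J_{\cov}^{\fock} = \sca{\ker q_{\cov}^{\rm N} \cap [\T(X)]_e} = \ker q_{\cov}^{\rm N}$ as ideals of $\T(X)$, and hence the identity on $\T(X)$ descends to a canonical $*$-isomorphism $\T_{\cov}^{\fock}(X) \simeq \N\T(X)$. Since a representation of $X$ is Fock covariant precisely when it factors through $\T_{\cov}^{\fock}(X)$, and Nica covariant precisely when it factors through $\N\T(X)$, the ``in particular'' statement follows at once. I expect the only real point requiring care to be the injectivity of the universal Nica covariant representation $\tilde t$ --- needed so that Proposition \ref{P:nc is fc} applies to it --- together with the bookkeeping that both $\J_{\cov}^{\fock}$ and $\ker q_{\cov}^{\rm N}$ are genuinely induced, so that comparison on the $e$-fibre is enough to conclude equality.
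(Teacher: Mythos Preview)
Your proposal is correct and follows essentially the same route as the paper: apply Proposition \ref{P:nc is fc} in both directions to the relevant universal representations (which are equivariant and injective), obtaining factorisations that force the two quotients of $\T(X)$ to coincide. The paper compresses this to a single sentence, while you spell out the induced-ideal bookkeeping and the injectivity check for $\tilde t$, but the substance is the same.
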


\begin{proof}
This is immediate by Proposition \ref{P:nc is fc} since the universal Fock covariant and the universal Nica covariant representations are both equivariant and injective.
\end{proof}

\subsection{The product system of a semigroup}\label{Ss:fock covariant}

One class of product systems of particular interest arises from the semigroup representations of $P$ itself.
There is a well-known left regular model in this case considered by Li \cite{Li13}, giving rise to a Fock covariant Fell bundle.
The corresponding graded relations were identified by Laca and Sehnem in \cite[Section 3]{LS21}, although not explicitly stated like this, see also \cite[Remark 3.15]{KKLL21a}.
In this subsection we will show how the Fock covariance description of \cite[Section 3]{LS21} coincides with our general description from Theorem \ref{T:Fock cov} in two ways.
One is achieved categorically though equation (\ref{eq:fock is u}), and the other is achieved from first principles in Proposition \ref{P:fock is u}, thus providing an independent validation of our results.

We consider $P$ to be a unital subsemigroup of a discrete group $G$.
Following the notation of \cite{LS21}, we write $\al = (p_1, q_1, \dots, p_n, q_n)$ for the words of even length where $p_1, q_1, \dots, p_n, q_n\in P$.
A word $\al = (p_1, q_1, \dots, p_n, q_n)$ is called \emph{neutral} if $p_1^{-1} q_1 \cdots p_n^{-1} q_n = e$.
For a word $\al = (p_1, q_1, \dots, p_n, q_n)$ we write
\[
K(\al) := q_n^{-1} p_n \dots q_1^{-1} p_1 P
\]
for the induced constructible ideal in $\J$.
For a map $w \colon P \to \B(H)$ we write
\[
\dot{w}_\al := w_{p_1}^* w_{q_1} \cdots w_{p_n}^* w_{q_n} \in \B(H).
\]
It follows that
\[
\dot{w}_\al \dot{w}_\be = \dot{w}_{\al \be},
\text{ for all words $\al, \be$,}
\]
where $\al \be$ denotes the concatenation of the words $\al$ and $\be$.

A map $w\colon P \to \B(H)$ is called a \emph{representation of $P$} if it is a semigroup homomorphism.
A representation $w$ of $P$ will be called \emph{equivariant} if there exists a $*$-homomorphism $\de$ of $\ca(w)$ such that
\[
\de \colon \ca(w) \to \ca(w) \otimes \ca_{\max}(G) ; w_p \mapsto w_p \otimes u_{p}.
\]
It follows that $\de$ is injective with a left inverse given by the map $\id \otimes \chi$.
Moreover, it satisfies the coaction identity and hence $\ca(w)$ admits a coaction by $G$.
For simplicity, we will say that $w$ admits a coaction by $G$ if such a $\de$ exists.

\begin{remark}
Let $w\colon P \to \B(H)$ be an isometric representation of $P$ (i.e., every $w_p$ is an isometry) and let $\al = (p_1, q_1, \dots, p_n, q_n)$ where $K(\al)\not = \mt$. 
Since every $w_p$ is an isometry we have
\[
w_p^* w_{pq} = w_p^* w_p w_q = w_q,
\foral
p,q \in P.
\]
Therefore a variant of the proof of Proposition \ref{P:mult} yields
\[
\dot{w}_\al w_r = w_{p_1^{-1} q_1 \cdots p_n^{-1}q_n r} 
\foral r \in K(\al).
\]
In particular, if $\al$ is neutral we obtain
\[
\dot{w}_\al w_r = w_r 
\foral r \in K(\al).
\]
\end{remark}
In \cite[Proposition 3.2]{LS21} it is shown that, if a map $w \colon P \to \B(H)$ satisfies
\begin{itemize}
\item [\rm{(T1)}] $w_{e}=1$,
\item [\rm{(T2)}] $\dot{w}_{\al}=0$ if $K(\al)=\mt$ for a neutral word $\al$, and
 \item [\rm{(T3)}] $\dot{w}_{\al}=\dot{w}_{\be}$ if $K(\al)=K(\be)$ for neutral words $\al$ and $\be$,
 \end{itemize}
then $w$ is an isometric representation of $P$, and the operators $\{\dot{w}_\al\}_{\al: \text{neutral}}$ are commuting projections. 
Let $\T(P)$ be the universal C*-algebra with respect to the isometric representations of $P$.
By the universal property, the C*-algebra $\T(P)$ admits a coaction by $G$ and hence a topological C*-grading. 

Let $\T_u(P)$ be the universal C*-algebra with respect to the maps $w\colon P\to \B(H)$ that satisfy conditions (T1)--(T3) and the additional condition:
\begin{itemize}
\item [\rm{(T4)}] $\prod_{\be\in F}(\dot{w}_{\al}-\dot{w}_{\be})=0$ whenever $\al$ is a neutral word, $F$ is a finite set of neutral words and $K(\al)=\bigcup_{\be\in F}K(\be)$.
\end{itemize}
Since conditions (T1)--(T4) are graded with respect to the grading of $\T(P)$, the C*-algebra $\T_u(P)$ is a quotient of $\T(P)$ by an induced ideal, and therefore $\T_u(P)$ admits a coaction by $G$. By using similar arguments as in Proposition \ref{P:max psx} it can be proved that we have
\[
\ca_{\max}\left(\Big\{[\T(P)]_g\Big\}_{g\in G}\right)\simeq \T(P),
\]
and hence combining with Proposition \ref{P:qnt bnd} we obtain
\[
\ca_{\max}\left(\Big\{[\T_u(P)]_g\Big\}_{g\in G}\right)\simeq \T_u(P).
\]
In \cite[Corollary 3.19, Corollary 3.20]{LS21} it is shown that there is a $*$-epimorphism $\T_u(P) \to \ca_\la(P)$ that is equivariant and injective on the fixed point algebra.
Hence $\T_u(P)$ coincides with the universal C*-algebra of the Fell bundle induced in $\ca_\la(P)$; see also \cite[Remark 3.15]{KKLL21a}.

Fix $\{v_p\}_{p \in P}$ such that $\T(P) = \ca(v)$.
We can then define the concrete product system $X_P$ in $\T(P)$ by
\[
X_{P, p} := \bC v_p \foral p \in P.
\]
We say that a representation $t$ of $X_P$ is \emph{unital} if it satisfies $t_e(v_e)=1$. Note that a unital representation is automatically injective. 
If $t\colon X_P\to \B(H)$ is a non-zero representation, then $t_e(v_e)$ is a projection that commutes with $\ca(t)$ and hence $K:=t_e(v_e) H$ is reducing for $\ca(t)$. Therefore by compressing on $K$ we can consider $t$ to be a unital representation of $X_P$.
Note that the Fock space of $X_P$ is unitarily equivalent to $\ell^2(P)$ by the unitary
\begin{equation} \label{eq:wp un}
W_P \colon \F X_P \to \ell^2(P); v_p \mapsto \de_p.
\end{equation}
It follows that
\begin{equation} \label{eq:tl}
\T_\la(X_P)  \simeq \ca_\la(P)
\end{equation}
by the canonical $*$-isomorphism induced by $\ad_{W_P}$, since $W_P \la_p(v_p) W_P^* = V_p$ for all $p \in P$.
Since this $*$-isomorphism is canonical we have that the Fell bundles induced in $\T_{\la}(X_P)$ and $\ca_\la(P)$ are isomorphic, and therefore we also have
\begin{equation} \label{eq:fock is u}
\T_{\cov}^{\fock}(X_P) \simeq \T_u(P).
\end{equation}
More generally, the association
\[
t = \{t_p\}_{p \in P} \mapsto w_t = \{t_p(v_p)\}_{p \in P}
\]
induces a bijection between the unital (equivariant) representations of $X_P$ and the unital (resp. equivariant) isometric representations of $P$.
Hence we have 
\begin{equation}
\T(X_P) \simeq \T(P)
\end{equation}
by a canonical $*$-isomorphism.
Note that Laca and Sehnem \cite[Definition 3.6]{LS21} coin $\T_u(P)$ as the Toeplitz algebra of $P$; however we will not use this terminology, as $\T_u(P)$ is not $\T(X_P)$.
Below we give an alternative proof of (\ref{eq:fock is u}) that squares with our characterisation of Fock covariance.

\begin{proposition} \label{P:fock is u}
Let $P$ be a unital subsemigroup of a discrete group $G$ and let $X_P$ be the induced product system in $\T(P)$.
Then the association
\[
t = \{t_p\}_{p \in P} \mapsto w_t = \{t_p(v_p)\}_{p \in P}
\]
defines a bijection between the unital equivariant Fock covariant representations of $X_P$ and the equivariant representations of $P$ that satisfy conditions (T1)--(T4).
\end{proposition}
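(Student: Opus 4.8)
The plan is to transport the characterisation of Fock covariance in Theorem \ref{T:Fock cov} through the established correspondence $t\mapsto w_t$ between unital (equivariant) representations of $X_P$ and unital (equivariant) isometric representations of $P$, exploiting that $X_P$ has one-dimensional fibres. First I would record the one-dimensional shape of the cores: for $\mt\neq\Bx\in\J$, the core $\bo{K}_{\Bx,\hat{t}_\ast}$ of $\T(X_P)\simeq\T(P)$ is the closed linear span of the elements $\dot v_\al$ over neutral words $\al$ with $K(\al)=\Bx$ (every spanning element of the core is of this form after using $v_e=1$ to absorb the factors $(\cdot)^0$). Under the canonical identification $\T_\la(X_P)\simeq\ca_\la(P)$ the map $\la_\ast$ sends $\dot v_\al\mapsto E_{[K(\al)]}$ by \cite[Lemma 3.1]{Li12}, while $t_\ast$ sends $\dot v_\al\mapsto\dot w_\al$, where $w:=w_t$. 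Thus $\bo{K}_{\Bx,\la_\ast}=\bC E_{[\Bx]}$, so $\la_\ast$ restricts to a $*$-epimorphism $\phi_\Bx\colon\bo{K}_{\Bx,\hat{t}_\ast}\to\bC$; and once $w$ satisfies (T3), the same $\phi_\Bx$ computes $t_\ast$ on that core, i.e.\ $t_\ast(b)=\phi_\Bx(b)\,e_\Bx$ where $e_\Bx:=\dot w_\al$ for any neutral $\al$ with $K(\al)=\Bx$ (both versions of $\phi_\Bx$ take the value $1$ on each generator $\dot v_\al$, hence agree).

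To see that $t\mapsto w_t$ lands in the stated target, take $t$ unital, equivariant and Fock covariant; then $w_t$ is equivariant, isometric and satisfies (T1), and Fock covariance yields $\ker\la_\ast\cap[\T(X_P)]_e\subseteq\ker t_\ast$. Feeding into this inclusion the $e$-fibre elements $\dot v_\al$ with $K(\al)=\mt$, $\dot v_\al-\dot v_\be$ with $K(\al)=K(\be)$, and $\prod_{\be\in F}(\dot v_\al-\dot v_\be)$ with $K(\al)=\bigcup_{\be\in F}K(\be)$ --- all killed by $\la_\ast$ since $E_{[\mt]}=0$, since $E_{[\Bx]}-E_{[\By]}=E_{[\Bx\setminus\By]}$ for $\By\subseteq\Bx$, and since $\prod_{\be\in F}E_{[K(\al)\setminus K(\be)]}=E_{[K(\al)\setminus\bigcup_\be K(\be)]}=E_{[\mt]}=0$ --- gives conditions (T2), (T3) and (T4) for $w_t$. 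So it remains to prove surjectivity (injectivity being inherited from the established bijection).

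For surjectivity I would start from an equivariant $w$ satisfying (T1)--(T4): by \cite[Proposition 3.2]{LS21} it is a unital isometric representation of $P$, hence $w=w_t$ for a unique unital equivariant representation $t$ of $X_P$, and I must verify conditions (i) and (ii) of Theorem \ref{T:Fock cov} for $t$. Condition (i), $\bo{K}_{\mt,t_\ast}=(0)$, is immediate from (T2). For (ii), let $\F=\{\Bx_1,\dots,\Bx_n\}\subseteq\J$ be $\cap$-closed with $\bigcup_i\Bx_i\neq\mt$ and $b_{\Bx_i}\in\bo{K}_{\Bx_i,\hat{t}_\ast}$; dropping any $\mt$-term (whose $t_\ast$-image lies in $\bo{K}_{\mt,t_\ast}=(0)$) I may assume every $\Bx_i\neq\mt$. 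Set $c_i:=\phi_{\Bx_i}(b_{\Bx_i})$, so $t_\ast(b_{\Bx_i})=c_i e_{\Bx_i}$. Since $e_{\Bx_i}w_r=w_r$ whenever $r\in\Bx_i$ (neutral words, as noted for isometric representations of $P$) and each $w_r$ is a nonzero isometry, the hypothesis $\sum_{i:r\in\Bx_i}t_\ast(b_{\Bx_i})t_r(X_r)=(0)$ for all $r\in\bigcup_i\Bx_i$ becomes $\sum_{i:r\in\Bx_i}c_i=0$ for all such $r$, and it suffices to deduce $\sum_i c_i e_{\Bx_i}=0$.

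This last implication, where (T4) enters, is the step I expect to be the main obstacle. The $e_\Bx$ ($\Bx\in\F$) are commuting projections with $e_\Bx e_\By=e_{\Bx\cap\By}$ (from $\dot w_\al\dot w_\be=\dot w_{\al\be}$ and $K(\al\be)=\Bx\cap\By$), so $B:=\ol{\spn}\{e_\Bx:\Bx\in\F\}$ is a finite-dimensional commutative C*-algebra, and it is enough to check $\chi\big(\sum_i c_i e_{\Bx_i}\big)=0$ for each character $\chi$ of $B$. Such a $\chi$ is determined by $S_\chi:=\{\Bx\in\F:\chi(e_\Bx)=1\}$, which is a nonempty $\cap$-closed up-set of $(\F,\subseteq)$ and hence equals $\{\By\in\F:\By\supseteq\Bz\}$ for $\Bz:=\bigcap_{\Bx\in S_\chi}\Bx\in\F$. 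If $\Bz\setminus\bigcup\{\By\in\F:\By\not\supseteq\Bz\}$ were empty then, as $\Bz\neq\mt$, one would get $\Bz=\bigcup\{\By\cap\Bz:\By\in\F,\ \By\not\supseteq\Bz\}$, a finite union of constructible ideals each properly contained in $\Bz$; realising these by neutral words and applying (T4) would give $\prod_{\By}(e_\Bz-e_{\By\cap\Bz})=0$, yet applying $\chi$ and using $\chi(e_\Bz)=1$, $\chi(e_{\By\cap\Bz})=0$ (as $\By\cap\Bz\subsetneq\Bz$, so $\By\cap\Bz\notin S_\chi$) forces $1=0$, a contradiction. Hence I may pick $r\in\Bz\setminus\bigcup\{\By\in\F:\By\not\supseteq\Bz\}$, for which $\{\By\in\F:r\in\By\}=S_\chi$; then $\chi\big(\sum_i c_i e_{\Bx_i}\big)=\sum_{\Bx_i\in S_\chi}c_i=\sum_{i:r\in\Bx_i}c_i=0$. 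This establishes (ii), so $t$ is Fock covariant, and the proof is complete.
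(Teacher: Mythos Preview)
Your proof is correct and follows essentially the same strategy as the paper. Both directions ultimately rest on Theorem~\ref{T:Fock cov} and the one-dimensionality of the cores $\bo{K}_{\Bx,\la_\ast}=\bC E_{[\Bx]}$, and in the reverse direction both arguments reduce the verification of condition~(ii) to a Gel'fand-theoretic analysis of the commutative C*-algebra generated by the projections $e_{\Bx}=\dot w_\al$: the paper works with a minimal projection $Q_B$ realising the norm of $b$ and splits into two cases according to whether $\bigcap_{\al_i\in B}K(\al_i)\subseteq\bigcup_{\al_j\notin B}K(\al_j)$, while you work with an arbitrary character $\chi$ and the corresponding up-set $S_\chi$; the dichotomy ``$\Bz\setminus\bigcup\{\By:\By\not\supseteq\Bz\}$ empty or not'' is exactly the paper's Case~1/Case~2 split, and the invocation of (T4) to rule out the first case is identical.

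One small stylistic difference worth noting: for the forward direction you bypass Theorem~\ref{T:Fock cov} entirely and use the raw kernel inclusion $\ker\la_\ast\cap[\T(X_P)]_e\subseteq\ker t_\ast$, feeding in the explicit witnesses $\dot v_\al$, $\dot v_\al-\dot v_\be$, $\prod_\be(\dot v_\al-\dot v_\be)$ which lie in $\ker\la_\ast$ because $E_{[\cdot]}$ respects set operations. This is a bit more direct than the paper, which first passes to conditions (i) and (ii) of Theorem~\ref{T:Fock cov} and then argues (T3) and (T4) by setting up specific families $\F$ and invoking condition~(ii). Your route makes it transparent that (T2)--(T4) are simply the $t_\ast$-images of relations already holding in $\T_\la(X_P)$.
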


\begin{proof}
To fix notation, let $\T(P) = \ca(v)$, $\T(X_P) = \ca(\hat{t})$ and $\T_u(P)=\ca(\tilde{u})$.
It is readily verified that the association $t \mapsto w_t$ is a bijection between the unital equivariant representations of $X_P$ and the unital equivariant isometric representations of $P$. 
When it is clear from the context we will simply write $w$ instead of $w_t$.
With this notation we have
\[
t_\ast \left( \hat{t}_{p_1}(v_{p_1})^*\hat{t}_{q_1}(v_{q_1})\cdots\hat{t}_{p_n}(v_{p_n})^* \hat{t}_{q_n}(v_{q_n}) \right) = \dot{w}_\al
\]
for every  word $\al = (p_1, q_1, \dots, p_n, q_n)$ where $p_1, q_1, \dots, p_n, q_n\in P$.

First suppose that $t$ is a unital equivariant Fock covariant representation of $X_P$, and let $w$ be the associated unital equivariant representation of $P$.
Then $t$ satisfies conditions (i) and (ii) of Theorem \ref{T:Fock cov}.
For condition (T1), we have that $t_e(v_{e})$ is the unit, and thus $w_{e} = t_e(v_{e}) = 1$.

For condition (T2), let $\al=(p_1,q_1,\dots,p_n,q_n)$ be a neutral word such that $K(\al)=\mt$.
We then have
\[
\dot{w}_\al
=
t_\ast \left( \hat{t}_{p_1}(v_{p_1})^*\hat{t}_{q_1}(v_{q_1})\cdots\hat{t}_{p_n}(v_{p_n})^* \hat{t}_{q_n}(v_{q_n}) \right) 
\in
\bo{K}_{\mt, t_\ast}
=(0),
\]
as required.

For condition (T3), let $\al=(p_1,q_1,\dots,p_n,q_n)$ and $\be=(r_1,s_1,\dots,r_m,s_m)$ be neutral words such that $K(\al)=K(\be)$.
For brevity, set  $\Bx := K(\al) = K(\be)$ and
\[
b_{\al} := \hat{t}_{p_1}(v_{p_1})^*\hat{t}_{q_1}(v_{q_1})\cdots\hat{t}_{p_n}(v_{p_n})^* \hat{t}_{q_n}(v_{q_n})
\qand
b_{\be} := \hat{t}_{r_1}(v_{r_1})^*\hat{t}_{s_1}(v_{s_1})\cdots\hat{t}_{r_m}(v_{r_m})^* \hat{t}_{s_m}(v_{s_m}).
\]
Let $r \in \bo{x}$ and $\xi_r=\mu v_r\in X_{P, r}$ with $\mu \in \bC$.
Then Proposition \ref{P:taut}, along with the equality $\dot{V}_{\al}=\dot{V}_{\be}$, yields
\begin{align*}
t_\ast(b_{\al})t_r(\xi_r)
& =
t_r(\la_\ast(b_{\al})\xi_r) 
=
\mu t_r(W_P^* \dot{V}_{\al}  \de_r)
=
\mu t_r(W_P^* \dot{V}_{\be} \de_r)
=
t_r(\la_\ast(b_{\be})\xi_r) 
=
t_\ast(b_{\be})t_r(\xi_r),
\end{align*}
for the unitary $W_P$ of (\ref{eq:wp un}).
Hence, we have
\[
(t_\ast(b_{\al})-t_\ast(b_{\be}))t_r(\xi_r)=0
\foral
\xi_r \in X_{P,r}, r \in \Bx.
\]
Thus applying condition (ii) for $t$ implies that $t_\ast(b_{\al})-t_\ast(b_{\be})=0$, and therefore
\[
\dot{w}_{\al}=t_\ast(b_{\al})=t_\ast(b_{\be})=\dot{w}_{\be},
\]
 as required.

For condition (T4), let $F$ be a finite set of neutral words, and let $\al$ be a neutral word such that $K(\al)=\bigcup_{\be\in F}K(\be)$.
Let $\F$ be the $\cap$-closure of $\{K(\be) \mid \be\in F\} \cup \{K(\al)\} \cup \{\mt\}$. For each $\mt \not = D \subseteq F$ we write $\be_D$ for the neutral word that arises by concatenating the words $\be \in D$ in some order.
As the induced constructible ideal does not depend on the order of the concatenation we choose (being on neutral words), we have
\[
K(\be_D) = \bigcap_{\be\in D} K(\be),
\]
and
\[
\F=\{K(\be_D):\mt \not = D\subseteq F\}\cup\{K(\al)\}\cup\{\mt\}.
\]
In particular we have
\[
\dot{w}_{\be_D} = \prod_{\be\in D}\dot{w}_{\be}.
\]
For each $\mt \not = D\subseteq F$, let $b_D \in \bo{K}_{K(\be_D),\hat{t}_\ast}$ and $b_{\al} \in \bo{K}_{K(\al), \hat{t}_\ast}$ such that 
\[
t_\ast(b_D)=\dot{w}_{\be_D}
\qand
t_\ast(b_{\al})=\dot{w}_{\al}.
\]
Then, for $r\in K(\al)$ and $\mu v_r \in X_{P,r}$, we have 
\begin{align*}
\left( t_\ast(b_{\al})+\sum_{\substack{\mt \neq D \subseteq F: \\r\in K(\be_D)}} (-1)^{|D|}t_\ast(b_{\be_D}) \right) t_r(\mu v_r)
& =
\left( \dot{w}_{\al}+\sum_{\substack{\mt \neq D \subseteq F: \\r\in K(\be_D)}} (-1)^{|D|}\dot{w}_{\be_D} \right) \mu w_r\\
& =
\left( 1+\sum_{\substack{\mt \neq D \subseteq F: \\r\in \bigcap\limits_{\be\in D} K(\be)}} (-1)^{|D|} \right) \mu w_r\\
& =
\left(1+\sum_{\mt \neq D \subseteq \{\be\in F \mid r\in K(\be) \}} (-1)^{|D|} \right) \mu w_r=0,
\end{align*}
where we used the equalities
\[
\dot{w}_\al w_r = w_r
\qand
\dot{w}_{\be_D} w_r = w_r 
\text{ when }
r \in K(\be_D).
\]
Note also that in the third equality we used that we have
\[
\{\mt \neq D \subseteq F \mid r \in \bigcap\limits_{\be \in D} K(\be)\}
=
\P(\{ \be \in F \mid r \in K(\be) \}) \setminus \{\mt\},
\]
and that these are non-empty sets since $r\in K(\al)=\bigcup_{\be\in F}K(\be)$.
Condition (ii) for $t$ then implies that
\[
t_\ast(b_{\al})+\sum_{\mt \neq D \subseteq F} (-1)^{|D|}t_\ast(b_{\be_D}) = 0,
\]
and therefore we have
\begin{align*}
\prod_{\be\in F}(\dot{w}_{\al}-\dot{w}_{\be})
& =
\dot{w}_{\al}+\sum_{\mt \neq D\subseteq F}(-1)^{|D|} \prod_{\be\in D}\dot{w}_{\be} \\
& =
\dot{w}_{\al}+\sum_{\mt \neq D\subseteq F}(-1)^{|D|} \dot{w}_{\be_D}
=
t_\ast(b_{\al})+\sum_{\mt \neq D \subseteq F} (-1)^{|D|}t_\ast(b_{\be_D}) = 0,
\end{align*}
as required.

For the reverse implication, suppose that $w$ is an equivariant representation of $P$ that satisfies conditions (T1)--(T4) and let $t$ be the unital equivariant representation of $X_P$ associated with $w$.
We will show that $t$ satisfies conditions (i) and (ii) of Theorem \ref{T:Fock cov}.

Let $\bo{x}$ be in $\J$ and pick a neutral word $\al=(p_1,q_1,\dots,p_n,q_n)$ such that $\bo{x} = K(\al)$.
Then condition (T3) implies that
\[
t_\ast(\bo{K}_{\bo{x},\hat{t}_\ast}) = \text{span}\{\dot{w}_{\al}\}.
\]
In particular, combining (T2) with (T3) yields $t_\ast(\bo{K}_{\mt,\hat{t}_\ast})=(0)$, and thus $t$ satisfies condition (i) of Theorem \ref{T:Fock cov}.

For condition (ii), let $\F=\{{\bo{x}_1,\dots,\bo{x}_n}\}$ be a finite $\cap$-closed subset of $\J$ such that $\bigcup_{i=1}^n \bo{x}_i \neq \mt$, and let $b_{\bo{x}_i} \in \bo{K}_{\bo{x}_i,\hat{t}_\ast}$ for $i=1,\dots,n$.
Suppose that 
\[
\sum_{i:r\in\bo{x}_i}t_\ast(b_{\bo{x}_i})t_r(X_{P,r})=0
\foral 
r \in \bigcup_{i=1}^n \bo{x}_i,
\]
and we will show that $\sum_{i=1}^n t_\ast(b_{\bo{x}_i}) = 0$.
For notational convenience, set 
\[
b := \sum_{i=1}^n t_\ast(b_{\bo{x}_i}).
\]
First notice that $t_\ast(\bo{K}_{\mt,\hat{t}_\ast}) = (0)$ as $t$ satisfies condition (i), therefore without loss of generality we may assume that $\Bx_i \neq \mt$ for all $i=1, \dots, n$.
Next let 
\[
F := \{\al_1,\dots,\al_n\}
\]
be a set of neutral words such that $K(\al_i) = \bo{x}_i$ for $i=1, \dots, n$.
Moreover, from the comments above we may pick $\mu_i\in\bC$ such that $t_\ast(b_{\bo{x}_i})=\mu_i\dot{w}_{\al_i}$ for $i=1,\dots,n$. 
As the projections $\dot{w}_{\al_i}$ commute, by the Gel'fand-Naimark Theorem, and considering a unit decomposition (see also \cite[Lemma 3.13]{LS21}), there exists a subset $B$ of $F$ such that
\[
\|b\| = \|Q_B b\|
\qfor
Q_B := \prod_{i:\al_i\in B} \dot{w}_{\al_i} \prod_{j:\al_j\not\in B}(1-\dot{w}_{\al_j}).
\]
Since $b = \sum_{i=1}^n \mu_i \dot{w}_{\al_i}$, by the form of the projection $Q_B$ we have
\[
\|b\| = \|Q_B b\| = \| \sum_{i:\al_i\in B} \mu_i Q_B \| = | \sum_{i:\al_i\in B} \mu_i |.
\]
We consider the following cases.

\smallskip

\noindent
Case 1. Suppose that 
\[
\bigcap_{i:\al_i\in B} K(\al_i) \subseteq \bigcup_{j:\al_j\not\in B}K(\al_j).
\]
Then we have
\[
\bigcap_{i:\al_i\in B}K(\al_i)
=
\bigcup_{j:\al_j\not\in B}\left(K(\al_j)\cap \left(\bigcap_{i:\al_i\in B}K(\al_i)\right)\right).
\]
Applying (T4) for
\[
K(\al)= \bigcap_{i:\al_i\in B}K(\al_i),
\]
where $\al$ is a concatenation of the words $\al_i\in B$, and 
\[
K(\be_j) = K(\al_j)\cap \left(\bigcap_{i:\al_i\in B}K(\al_i)\right)
\text{ such that }
\al_j \notin B,
\] 
where  $\beta_j$ is a concatenation of the words  $\al_i\in B$ and $\al_j$,
gives
\[
\prod_{j : \al_j \notin B} (\dot{w}_\al - \dot{w}_{\be_j}) = 0.
\]
By construction, for any $j$ with $\al_j \notin B$ we have
\[
\dot{w}_{\be_j} \leq \dot{w}_{\al_j}
\qand
\dot{w}_{\be_j} \leq \dot{w}_{\al} = \prod_{i:\al_i\in B}\dot{w}_{\al_i}.
\]
Therefore we have
\[
Q_B = \dot{w}_\al \prod_{j:\al_j\not\in B}(1-\dot{w}_{\al_j}) \leq \dot{w}_\al \prod_{j : \al_j \notin B} (1 - \dot{w}_{\be_j}) = \prod_{j : \al_j \notin B} (\dot{w}_\al - \dot{w}_{\be_j}) = 0,
\]
and thus $b = 0$. 

\smallskip

\noindent
Case 2. Suppose that
\[
\bigcap_{i:\al_i\in B} K(\al_i) \not\subseteq \bigcup_{j:\al_j\not\in B}K(\al_j).
\]
Then there exists a $p\in\bigcap_{i:\al_i\in B} K(\al_i)\subseteq \bigcup_{i=1}^n K(\al_i)$ such that $p\not\in\bigcup_{j:\al_j\not\in B} K(\al_j)$. 
A moment's thought shows that this dichotomy implies that
\[
\{i \mid \al_i \in B\}
=
\{i \mid p \in K(\al_i)\}.
\]
Since $w_p$ is an isometry and $\dot{w}_{\al_i} w_p = w_p$ for all $i$ with $p \in K(\al_i)$, we conclude that
\begin{align*}
\|b\|
& =
|\sum_{i:\al_i\in B} \mu_i|
=
| \sum_{i:p\in K(\al_i)}\mu_i | 
=
\| \sum_{i:p\in K(\al_i)} \mu_i\dot{w}_{\al_i}w_p \|
=
\| \sum_{i:p\in \Bx_i} t_\ast(b_{\bo{x}_i})t_p(v_p) \|
=
0,
\end{align*}
where we applied the assumption for $p \in \bigcup_{i=1}^n \bo{x}_i$ in the last equality.
Thus $b=0$ in this case as well, and the proof is complete.
\end{proof}

\begin{corollary}
Let $P$ be a unital subsemigroup of a discrete group $G$ and let $X_P$ be the associated product system. Then $\T_{\cov}^{\fock}(X_P)$ and $\T_u(P)$ are canonically isomorphic. In particular, a representation of $X_P$ is Fock covariant if and only if the associated semigroup representation of $P$ satisfies conditions (T1)--(T4).
\end{corollary}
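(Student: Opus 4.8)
The plan is to read off the corollary from Proposition \ref{P:fock is u} by matching universal properties, using in addition the canonical identification $\T(X_P) \simeq \T(P)$ of the ambient Toeplitz algebras together with the coaction identifications $\T_{\cov}^{\fock}(X_P) \simeq \ca_{\max}(\F\C X_P)$ and $\ca_{\max}(\{[\T_u(P)]_g\}_{g \in G}) \simeq \T_u(P)$ recorded above. The crux is that $\T_{\cov}^{\fock}(X_P)$ and $\T_u(P)$ are exactly the universal C*-algebras attached to the two classes of representations that Proposition \ref{P:fock is u} puts in bijection.

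First I would check that the universal generating representation of $\T_{\cov}^{\fock}(X_P)$ is a \emph{unital equivariant} Fock covariant representation of $X_P$: it is unital because $X_{P,e} = \bC v_e$ with $v_e$ the unit of $\T(P)$ and the creation operator $\la_e(v_e)$ is the identity on $\F X_P$; it is equivariant because $\T_{\cov}^{\fock}(X_P)$ carries a (normal) coaction by $G$; and it is Fock covariant by construction. Symmetrically, the universal generating representation $\tilde u$ of $\T_u(P)$ is an \emph{equivariant} representation of $P$ satisfying (T1)--(T4), since $\T_u(P)$ carries a coaction by $G$. Feeding these two universal representations into the bijection of Proposition \ref{P:fock is u} and invoking the universal properties of $\T_{\cov}^{\fock}(X_P)$ (as $\ca_{\max}(\F\C X_P)$) and of $\T_u(P)$ produces canonical $*$-epimorphisms $\T_{\cov}^{\fock}(X_P) \to \T_u(P)$ and $\T_u(P) \to \T_{\cov}^{\fock}(X_P)$, each fixing the generators; being mutually inverse on a generating set, they are mutually inverse $*$-isomorphisms, which is the asserted canonical isomorphism.

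For the ``in particular'' clause I would transport this through the definition of Fock covariance: a representation $t$ of $X_P$ is Fock covariant exactly when the induced $*$-representation $\T(X_P) \to \ca(t)$ factors through the quotient map $\T(X_P) \to \T_{\cov}^{\fock}(X_P)$; under the canonical isomorphisms $\T(X_P) \simeq \T(P)$ and $\T_{\cov}^{\fock}(X_P) \simeq \T_u(P)$ this says precisely that $w_t = \{t_p(v_p)\}_{p \in P}$ factors through $\T(P) \to \T_u(P)$, i.e., that $w_t$ satisfies (T1)--(T4). When $t$ is non-zero but not unital, one first compresses to the reducing subspace $t_e(v_e)H$ as in the discussion before Proposition \ref{P:fock is u}, where $w_t$ becomes a genuine isometric representation of $P$ and the relations (T1)--(T4) are unaffected.

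I do not expect a real obstacle here, as the corollary is bookkeeping on top of Proposition \ref{P:fock is u}. The one point needing care --- and the thing I would verify most explicitly --- is that the two $*$-homomorphisms above are genuinely \emph{canonical}, i.e.\ each carries $v_p$ to $v_p$, so that the composites are the identity maps on generators; this is where one uses both that the bijection of Proposition \ref{P:fock is u} is implemented by $t \mapsto \{t_p(v_p)\}_{p \in P}$ and that $\T_{\cov}^{\fock}(X_P)$ and $\T_u(P)$ are the respective universal C*-algebras for the matched representation classes.
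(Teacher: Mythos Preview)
Your proposal is correct and follows essentially the same approach as the paper's proof, which is the one-line observation that Proposition~\ref{P:fock is u} applies because the universal Fock covariant representation of $X_P$ is equivariant and unital (the latter since the left action on $X_P$ is unital). You have simply unpacked this observation into an explicit two-way universal-property argument, which is exactly what the paper's sentence encodes.
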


\begin{proof}
This is immediate by Proposition \ref{P:fock is u} since the universal Fock covariant representation of $X_P$ is equivariant and the left action on $X_P$ is unital.
\end{proof}

\section{The reduced Hao--Ng isomorphism problem}\label{S:Hao-Ng}

In this section we provide the affirmative solution to the reduced Hao--Ng isomorphism problem for generalised gauge actions by discrete groups.
In order to put the problem into context we require some elements from strong covariant representations and tensor algebras. We will also require some elements from the theory of nonselfadjoint operator algebras.

\subsection{Operator algebras} \label{Ss:opalg}

For the general theory of nonselfadjoint operator algebras and full details, the reader is addressed to \cite{BL04, Pau02}.
By an operator algebra $\fA$ we will mean a norm-closed subalgebra of some $\B(H)$.
Every operator algebra attains a C*-cover, i.e., a completely isometric homomorphism $\io \colon \fA \to \C$ such that $\C = \ca(\io(\fA))$.
The \emph{C*-envelope} $\cenv(\fA)$ is the co-universal C*-cover of $\fA$, i.e., there exists a completely isometric homomorphism $\io \colon \fA \to \cenv(\fA)$ such that for any other C*-cover $j \colon \fA \to \C$ there exists a unique $*$-epimorphism $\Phi \colon \C \to \cenv(\fA)$ such that $\Phi \circ j = \io$.

The existence of the C*-envelope was established by Hamana \cite{Ham79} through the existence of the injective envelope.
An independent proof was established by Dritschel and McCullough \cite{DM05} through the existence of maximal dilations.
Recall that a homomorphism $\phi \colon \fA \to \B(K)$ is called a \emph{dilation} of $\io \colon \fA \to \B(H)$ if $H \subseteq K$ and $\io(\cdot) = P_H \phi(\cdot )|_H$.
A dilation is called \emph{maximal} if it only attains trivial dilations, i.e., dilations by orthogonal summands.
Dritschel and McCullough show in \cite[Theorem 1.2]{DM05} that every (completely isometric) representation of an operator algebra $\fA$ admits a (resp. completely isometric) maximal dilation.
Moreover in \cite[Theorem 4.1]{DM05} they obtain that the C*-algebra of a maximal completely isometric representation satisfies the co-universal property of the C*-envelope.
In a simplified version, Arveson \cite{Arv08} has shown that a representation of $\fA$ is maximal if and only if it has a unique extension to a completely positive map on $\ca(\fA)$ that is a $*$-homomorphism.
The proof in \cite{Arv08} refers to operator systems, but it can be adapted to the operator algebras category; the reader is directed to \cite[Section 2]{DS18} and  \cite[Proposition 4.8]{Sal17} for the full details.

\subsection{Strong covariant representations}

We review the key elements of the strong covariant C*-algebra of a product system established by Sehnem \cite{Seh18, Seh21}, and its Fell bundle considered by Dor-On, the first author, Katsoulis, Laca and Li \cite{DKKLL20}.
The strong covariant algebra of Sehnem \cite{Seh18} as well as the reduced variant considered in \cite{DKKLL20} encodes the previously known constructions for a co-universal or a Cuntz-type C*-algebra, see \cite{DKKLL20, Seh18, Seh21} and the references therein for these connections.

Let $P$ be a unital subsemigroup of a discrete group $G$ and let $X$ be a product system over $P$. Set $A:=X_e$, and for each $p\in P$ let $\vphi_p\colon A\to \L(X_p)$ be the $*$-homomorphism that implements the left action of $A$ on the C*-correspondence $X_p$. For a finite set $F \subseteq G$ let
\[
K_F := \bigcap_{g \in F} gP.
\]
For $r \in P$ and $g \in F$ define the ideal of $A$ given by
\[
I_{r^{-1} K_{\{r,g\}}} :=
\begin{cases}
\bigcap\limits_{s \in K_{\{r,g\}}} \ker \vphi_{r^{-1} s} & \text{if } K_{\{r,g\}} \neq \mt \text{ and } r \notin K_{\{r,g\}},\\
\hspace{.5cm} A & \text{otherwise},
\end{cases}
\]
and set
\[
I_{r^{-1} (r \vee F)} := \bigcap_{g \in F} I_{r^{-1} K_{\{r,g\}}}.
\]
We have $I_{r^{-1}(r \vee F)} = I_{(pr)^{-1}(pr \vee pF)}$ for all $r,p \in P$, and $I_{r^{-1}(r \vee F)} = I_{(s^{-1}r)^{-1}(s^{-1}r \vee s^{-1}F)}$ for all $r \in sP$.
Moreover
\[
I_{r^{-1}(r \vee F_1)} \subseteq I_{r^{-1}(r \vee F_2)} \; \text{ when } \; F_1 \supseteq F_2.
\]
We declare $K_\mt= \mt$ and $I_{r^{-1} (r \vee \mt)} = A$.
For a finite set $F \subseteq G$, let the C*-correspondences
\[
X_F := \sumoplus_{r \in P} X_r I_{r^{-1} (r \vee F)}
\qand
X_F^+ := \sumoplus_{g \in G} X_{gF}.
\]
We declare $X_\mt = X_\mt^+ = \F X$.
Each $X_F^+$ is reducing for the coaction
\[
\de \colon \T_\la(X) \longrightarrow \T_\la(X) \otimes \ca_{\max}(G); \la_p(\xi_p) \mapsto \la_p(\xi_p) \otimes u_p,
\]
giving rise to a $*$-representation
\[
\Phi_F \colon \T_\la(X) \longrightarrow \L( X_F^+ ) ; \la_p(\xi_p) \mapsto (\la_p(\xi_p) \otimes u_p)|_{X_F^+}.
\]
Here we make the identification
\[
X_{gF} \longrightarrow X_{gF} \otimes \de_g ; \xi_r a_r \mapsto \xi_r a_r \otimes \de_g, \text{ for } \xi_r \in X_r, a_r \in I_{r^{-1}(r \vee gF)}.
\]
Moreover $X_F \subseteq X_F^+$ is reducing for $[\T(X)]_e$ and so we obtain the representation
\[
\bigoplus\limits_{\textup{fin } F \subseteq G} \Phi_F(\cdot)|_{X_F} \colon [\T_\la(X)]_e \longrightarrow \prod\limits_{\text{fin } F \subseteq G} \L(X_F).
\]
We fix the ideal $\I_{\scv, e}$ in $[\T(X)]_e$ by using the corona universe, namely
\[
b \in \I_{\scv, e} \qiff \bigoplus\limits_{\textup{fin } F \subseteq G} \Phi_F(\la_\ast(b))|_{X_F} \in c_0 \left( \L(X_F) \mid \textup{fin } F \subseteq G \right).
\]
We write $A \times_X P$ for the equivariant quotient of $\T(X)$ by the induced ideal $\I_{\scv} := \sca{ \I_{\scv, e} }$.
It is shown in \cite{Seh18} that this construction does not depend on $G$.
Moreover, we define the \emph{strong covariant} bundle of $X$ to be the Fell bundle
\[
\S\C X := \Big\{ [A \times_X P]_g \Big\}_{g \in G},
\]
given by the coaction by $G$ on $A \times_X P$.
A representation of $X$ that promotes to a representation of $\S\C X$ will be called a \emph{strong covariant representation of $X$}.
By combining Proposition \ref{P:qnt bnd} with Proposition \ref{P:max psx} we have that $A \times_X P$ is the universal C*-algebra of $\S\C X$.
We further write $A \times_{X, \la} P$ for the reduced C*-algebra of $\S\C X$.
As the strong covariant relations are graded and induced by representations on $\T_\la(X)$ it follows that a strong covariant representation is Fock covariant.

For a product system $X$ we will write $\T_\la(X)^+$ for the \emph{tensor algebra of $X$}, i.e., for the norm-closed subalgebra generated by $\{\la_p(X_p)\}_{p \in P}$ inside $\T_\la(X)$.
We will require the following consequence of \cite[Corollary 3.5]{Seh21}.
We note that although only product systems in the sense of Fowler are considered in \cite{Seh21}, the arguments apply in our setting as well.

\begin{corollary}\label{C:cis}
Let $P$ be a unital subsemigroup of a discrete group $G$ and let $X$ be a product system over $P$.
If $t$ is a Fock covariant injective representation of $X$ and $t$ admits a coaction by $G$ which is normal, then the map
\[
\T_{\la}(X)^+ \to \ol{\alg}\{t_p(X_p) \mid p \in P\}; \la_p(\xi_p) \mapsto t_p(\xi_p),
\] 
is a (well-defined) completely isometric isomorphism.
\end{corollary}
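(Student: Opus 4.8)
The plan is to realise the asserted map as the restriction of a genuine $*$-homomorphism between the enveloping C*-algebras, and then to extract its complete isometry from Sehnem's work. First I would use that $t$ is Fock covariant: the induced $*$-representation $t_\ast\colon\T(X)\to\ca(t)$ annihilates the induced ideal $\J_{\cov}^{\fock}$, hence factors as $t_\ast=\dot t\circ q_{\cov}^{\fock}$ through a $*$-homomorphism $\dot t\colon\T_{\cov}^{\fock}(X)\to\ca(t)$. As recorded in the discussion following the definition of $\F\C X$ in Section \ref{S:fock covariant} (combining Proposition \ref{P:eq dgrm}, Proposition \ref{P:qnt bnd} and Proposition \ref{P:max psx} with normality of the coaction on $\la$), the canonical map $\dot\la\colon\T_{\cov}^{\fock}(X)\to\T_\la(X)$ is a $*$-isomorphism. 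Composing, $\Theta:=\dot t\circ\dot\la^{-1}\colon\T_\la(X)\to\ca(t)$ is an equivariant $*$-epimorphism with $\Theta(\la_p(\xi_p))=t_p(\xi_p)$ for all $p\in P$ and $\xi_p\in X_p$. Restricting $\Theta$ to $\T_\la(X)^+$ yields a completely contractive homomorphism onto $\ol{\alg}\{t_p(X_p)\mid p\in P\}$ implementing $\la_p(\xi_p)\mapsto t_p(\xi_p)$; in particular the map is well defined and multiplicative, and what remains is to prove it is completely isometric.

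For the complete isometry I would invoke \cite[Corollary 3.5]{Seh21}. The hypotheses line up: $t$ is injective on $A$, so each $t_p$ is isometric and $\Theta$ restricts isometrically to every $\la_p(X_p)$ — indeed, by the Remark following Theorem \ref{T:Fock cov}, $\Theta$ is injective on every $\bo{K}_{\Bx,\la_\ast}$-core; moreover, since $t$ admits a normal coaction, $\ca(t)$ carries a faithful conditional expectation onto its $e$-fibre, as does $\T_\la(X)\simeq\ca_\la(\F\C X)$, and $\Theta$ intertwines them because it is equivariant. In this configuration the Fej\'er-summation argument of \cite[Corollary 3.5]{Seh21} gives that $\Theta$ is completely isometric on $\T_\la(X)^+$. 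The one caveat is that loc.\ cit.\ is phrased for product systems in the sense of Fowler; but that argument uses only the coaction by $G$, the cores $\bo{K}_{\Bx,\bullet}$, the left creation operators on $\F X$, injectivity on $A$ and Fock covariance — all available for concrete product systems through Sections \ref{S:pre} and \ref{S:fock covariant} — so it carries over verbatim, and this verification is the only part one would actually need to write out.

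The main obstacle is exactly this complete-isometry step. It is not the soft fact that a C*-cover embeds an operator algebra completely isometrically, because $\ca(t)$ need not be comparable with $A\times_{X,\la}P=\cenv(\T_\la(X)^+)$: we only assume $t$ is Fock covariant, not strong covariant, so there is in general no C*-homomorphism relating $\ca(t)$ to the C*-envelope, and one cannot bootstrap from Sehnem's identification of the latter. The Fej\'er argument has to be run genuinely, and it is delicate precisely because $G$ need not be amenable — this is where the co-universality of $\T_\la(X)^+$ and the strong-covariance machinery of \cite{Seh21} do the real work. By contrast, well-definedness of the map rests solely on Fock covariance of $t$ (without it the map need not exist), while injectivity together with normality of the coaction enter only in this final step.
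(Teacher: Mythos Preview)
There is a genuine gap in your construction of the map $\Theta$. You assert that $\dot\la\colon\T_{\cov}^{\fock}(X)\to\T_\la(X)$ is a $*$-isomorphism, but this is false in general: the discussion after the definition of $\F\C X$ establishes that $\T_{\cov}^{\fock}(X)\simeq\ca_{\max}(\F\C X)$ while $\T_\la(X)\simeq\ca_\la(\F\C X)$, and $\dot\la$ is precisely the canonical $*$-epimorphism from the full to the reduced C*-algebra of the Fell bundle $\F\C X$. It is injective on the $e$-fibre, but there is no reason for it to be globally injective unless $\F\C X$ is amenable, which is not assumed. Hence $\dot\la^{-1}$ need not exist and the composite $\Theta=\dot t\circ\dot\la^{-1}$ is not available.

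The paper obtains the map $\Phi_\la\colon\T_\la(X)\to\ca(t)$ differently, and this is exactly the step you are missing. Starting from the equivariant $*$-epimorphism $\Phi\colon\T_{\cov}^{\fock}(X)\to\ca(t)$, one passes to the induced map between the \emph{reduced} C*-algebras of the respective Fell bundles. Since the coaction on $\ca(t)$ is normal, $\ca(t)$ already coincides with the reduced C*-algebra of its own Fell bundle; combined with $\T_\la(X)\simeq\ca_\la(\F\C X)$ this yields a canonical $*$-epimorphism $\Phi_\la\colon\T_\la(X)\to\ca(t)$ sending $\la_p(\xi_p)$ to $t_p(\xi_p)$. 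Normality of the coaction on $\ca(t)$ is therefore not merely an input to the Fej\'er argument in \cite[Corollary 3.5]{Seh21}, as you suggest; it is what makes the map $\Phi_\la$ exist in the first place. Once $\Phi_\la$ is in hand, the rest of your outline (injectivity on $A$, then \cite[Corollary 3.5]{Seh21}) matches the paper's proof.
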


\begin{proof}
Let the canonical $*$-epimorphism $\Phi \colon \T_{\cov}^{\fock}(X) \to \ca(t)$.
Since $t$ is equivariant, then $\Phi$ induces a $*$-epimorphism between the reduced C*-algebras of the induced Fell bundles.
However, since the coaction on $\ca(t)$ is normal, it follows that $\ca(t)$ coincides with the reduced C*-algebra of its induced Fell bundle.
Hence there exists a canonical $*$-epimorphism $\Phi_\la \colon \T_\la(X) \to \ca(t)$, which is injective on $A$ since $t$ is injective.
By \cite[Corollary 3.5]{Seh21} it follows that the restriction of $\Phi_\la$ on $\T_\la(X)^+$ is completely isometric.
\end{proof}

Since $A \times_{X,\la} P$ satisfies the conditions of Corollary \ref{C:cis}, it is a C*-cover of $\T_\la(X)^+$.
In \cite[Theorem 5.1]{Seh21}, Sehnem establishes that $A \times_{X, \la} P$ is the C*-envelope of $\T_\la(X)^+$.

\subsection{Crossed products}

The theory of crossed products of C*-algebras is well-known, see for example \cite{Wil07}.
Katsoulis and Ramsey \cite{KR16} have extended this to group actions over possibly nonselfadjoint operator algebras.
Here we will comment only on the parts that are relevant to the reduced Hao--Ng isomorphism problem.

Let $\fH$ be a locally compact group that acts on an operator algebra $\fA$ by completely isometric automorphisms.
The action of $\fH$ then extends also to a group action $\dot{\al}$ of the C*-envelope of $\fA$, and we can form the reduced crossed product $\cenv(\fA) \rtimes_{\dot\al, \la} \fH$.
By considering the copy of $\fA$ inside $\cenv(\fA)$, the reduced crossed product $\fA \rtimes_{\al, \la} \fH$ is defined as the norm-closed subalgebra of $\cenv(\fA) \rtimes_{\dot{\al}, \la} \fH$ generated by the $\fA$-valued functions \cite[Definition 3.17]{KR16}.
One of the main questions in the theory is whether this inclusion induces a canonical $*$-isomorphism with the C*-envelope of $\fA \rtimes_{\al, \la} \fH$, i.e., whether we have
\begin{equation} \label{eq:cenv}
\cenv(\fA \rtimes_{\al, \la} \fH) \stackrel{?}{\simeq} \cenv(\fA) \rtimes_{\dot\al, \la} \fH.
\end{equation}
When $\fA$ admits a contractive approximate identity, this has been answered to the affirmative when $\fH$ is discrete \cite{Kat17}, and when $\fH$ is abelian \cite{KR16}.
By using the maximal representations of \cite{DM05} we can remove the contractive approximate identity hypothesis when $\fH$ is discrete.

\begin{proposition} \label{P:dis max}
Let $\fH$ be a discrete group acting by $\al$ on an operator algebra $\fA$.
Then
\[
\cenv(\fA \rtimes_{\al, \la} \fH) \simeq \cenv(\fA) \rtimes_{\dot\al, \la} \fH.
\]
\end{proposition}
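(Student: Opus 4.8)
The strategy is to realise $\C := \cenv(\fA) \rtimes_{\dot\al, \la} \fH$ as the C*-envelope of $\fB := \fA \rtimes_{\al, \la} \fH$ by showing that the canonical completely isometric embedding $\fB \hookrightarrow \C$ of \cite[Definition 3.17]{KR16} is a maximal representation; equivalently, by the adaptation in \cite{DS18} of \cite{Arv08} together with \cite{DM05}, that it has the unique extension property relative to the C*-algebra it generates, which is $\C$. Since that embedding is completely isometric and $\C$ is a C*-cover of $\fB$, this yields the desired canonical $*$-isomorphism $\cenv(\fB) \simeq \C$. The place where the contractive-approximate-identity hypothesis of \cite{Kat17, KR16} gets removed is precisely the choice of a convenient realisation of $\C$, using a Dritschel--McCullough maximal representation of $\fA$ instead of an approximate unit.

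First I would fix, via \cite{DM05}, a maximal completely isometric representation $\pi \colon \fA \to \B(H)$ with $\ca(\pi(\fA)) = \cenv(\fA)$, and let $\dot\pi \colon \cenv(\fA) \to \B(H)$ be the induced faithful $*$-representation. Let $\tilde\pi := \dot\pi \rtimes \la \colon \C \to \B(\ell^2(\fH, H))$ be the associated regular (reduced) representation, which is faithful because $\dot\pi$ is. Writing $\ell^2(\fH, H) = \bigoplus_{h \in \fH} H$, the restriction of $\tilde\pi$ to the copy $\fA \la_e \subseteq \fB$ of $\fA$ is the block-diagonal representation $\bigoplus_{h \in \fH} \pi \circ \al_{h^{-1}}$. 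Each summand $\pi \circ \al_{h^{-1}}$ is again a maximal completely isometric representation of $\fA$, since maximality is preserved under precomposition with the completely isometric automorphism $\al_{h^{-1}}$ and under direct sums; hence $\tilde\pi|_{\fA \la_e}$ is a maximal representation of $\fA$, and in particular it has the unique extension property relative to $\ca(\tilde\pi(\fA \la_e)) = \tilde\pi(\cenv(\fA) \la_e)$, a copy of $\cenv(\fA)$ inside $\tilde\pi(\C)$.

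For the main step, I would take an arbitrary completely contractive completely positive $\Psi \colon \C \to \B(\ell^2(\fH, H))$ with $\Psi|_{\fB} = \tilde\pi|_{\fB}$ and show $\Psi = \tilde\pi$. Restricting $\Psi$ to $\cenv(\fA) \la_e$ gives a completely positive extension of the maximal representation $\tilde\pi|_{\fA \la_e}$, so by the unique extension property $\Psi$ coincides there with the $*$-homomorphism $\tilde\pi$; hence $\cenv(\fA) \la_e$ lies in the multiplicative domain of $\Psi$. Now for $a \in \fA$ and $g \in \fH$ one computes, inside $\C$, that $(a\la_g)^*(a\la_g) = \dot\al_{g^{-1}}(a^*a)\la_e$ and $(a\la_g)(a\la_g)^* = aa^*\la_e$ both lie in $\cenv(\fA)\la_e$ — this is exactly the point where one uses that the $\la_e$-fibre of the crossed product is all of $\cenv(\fA)$, not merely $\fA$. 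Since $\Psi(a\la_g) = \tilde\pi(a\la_g)$ and $\Psi$ is multiplicative on $\cenv(\fA)\la_e$, the Kadison--Schwarz identities $\Psi((a\la_g)^*(a\la_g)) = \Psi(a\la_g)^*\Psi(a\la_g)$ and $\Psi((a\la_g)(a\la_g)^*) = \Psi(a\la_g)\Psi(a\la_g)^*$ show that each $a\la_g$ belongs to the multiplicative domain of $\Psi$. As the multiplicative domain is a C*-subalgebra of $\C$ and the elements $\{a\la_g : a \in \fA,\, g \in \fH\}$ generate $\C$, it follows that $\Psi$ is a $*$-homomorphism, and it agrees with $\tilde\pi$ on the generating set $\fB$, so $\Psi = \tilde\pi$. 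This establishes the unique extension property, hence the proposition.

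The step I expect to be the main obstacle is the maximality assertion for $\bigoplus_{h \in \fH} \pi \circ \al_{h^{-1}}$ together with arranging the realisation of $\C$ so that the copy of $\fA$ decomposes in this way; once that is in place the multiplicative-domain argument is essentially formal, the only technical point being that $\Psi$ should first be extended to the unitisation of $\C$ so that the Kadison--Schwarz inequality is available.
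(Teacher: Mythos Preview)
Your proof is correct and follows essentially the same route as the paper's: both start from a maximal representation $\pi$ of $\fA$, form the regular representation $\ol{\pi}\rtimes U$ of the reduced crossed product, observe that its restriction to $\fA$ is the direct sum $\bigoplus_{\fh}\pi\circ\al_{\fh}$ of maximal representations (hence maximal, via \cite[Proposition 4.4]{Arv11}), and then exploit the identities $(aU_{\fh})(aU_{\fh})^* = aa^*$ and $(aU_{\fh})^*(aU_{\fh}) = \al_{\fh}^{-1}(a^*a)$ landing in $\cenv(\fA)$. The only difference is cosmetic: the paper verifies maximality by taking a dilation $\rho$ and showing its off-diagonal blocks vanish via those identities, whereas you verify the equivalent unique extension property by showing the same identities force the generators $a\la_g$ into the multiplicative domain of any ccp extension $\Psi$; these are the two sides of the Arveson equivalence you both cite.
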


\begin{proof}
We identify $\fA$ with its copy in $\cenv(\fA)$. Let $\pi \colon \fA \to \B(H)$ be a maximal completely isometric representation of $\fA$.
By \cite{DM05} (see also the relevant comments in Subsection \ref{Ss:opalg}) the map $\pi$ has a unique extension to a faithful $*$-representation on $\cenv(\fA)$, denoted by the same symbol.
Therefore $\ol{\pi} \rtimes U$ is a faithful $*$-representation of $\cenv(\fA) \rtimes_{\dot\al, \la} \fH$, and thus its restriction on $\fA \rtimes_{\al, \la} \fH$ is completely isometric.
By \cite{DM05} (see also the relevant comments in Subsection \ref{Ss:opalg}), it suffices to show that $\ol{\pi} \rtimes U$ is maximal on $\fA \rtimes_{\al, \la} \fH$, as then it will follow that
\[
\cenv(\fA \rtimes_{\al, \la} \fH) \simeq \ca(\ol{\pi} \rtimes U) \simeq \cenv(\fA) \rtimes_{\dot\al, \la} \fH
\]
by $*$-isomorphisms fixing $\fA \rtimes_{\al, \la} \fH$.

Towards this end, let $\rho \colon \fA \rtimes_{\al, \la} \fH \to \B(K)$ be a maximal dilation of $\ol{\pi} \rtimes U|_{\fA \rtimes_{\al, \la} \fH}$, 
and let us denote by the same symbol the unique extension of $\rho$ to a $*$-representation on 
\[
\ca(\fA \rtimes_{\al, \la} \fH) 
= 
\ol{\spn}\{\ol{\pi}(c) U_{\fh} \mid c \in \cenv(\fA), \fh \in \fH\}.
\]
By Arveson's Extension Principle, each $\pi \circ \al_\fh$ is a maximal representation of $\fA$, and it is a standard argument that $\ol{\pi}$ is maximal as the discrete direct sum of maximal representations, e.g., see \cite[Proposition 4.4]{Arv11}.
Therefore $\rho$ on $\ol{\pi}(\fA)$ takes up the form
\[
\rho(\ol{\pi}(a))
=
\begin{bmatrix} \ol{\pi}(a) & 0 \\ 0 & \si(a) \end{bmatrix} \foral a \in \fA,
\]
for a representation $\si$ of $\fA$.
Next consider $\ol{\pi}(a) U_{\fh}$ for $a \in \fA$ and $\fh \in \fH$, and write
\[
\rho(\ol{\pi}(a) U_\fh)
=
\begin{bmatrix} \ol{\pi}(a) U_{\fh} & x \\ y & z \end{bmatrix}.
\]
By using the unique extension property of $\rho$ we have
\begin{align*}
\begin{bmatrix} \ol{\pi}(a) \ol{\pi}(a)^* + xx^* & \ast \\ \ast & \ast\end{bmatrix}
& =
\rho(\ol{\pi}(a) U_\fh) \rho(\ol{\pi}(a) U_\fh)^*
=
\rho(\ol{\pi}(a) U_\fh U_{\fh}^* \ol{\pi}(a)^*) \\
& =
\rho(\ol{\pi}(a) \ol{\pi}(a)^*)
=
\rho(\ol{\pi}(a)) \rho(\ol{\pi}(a))^*
=
\begin{bmatrix} \ol{\pi}(a) \ol{\pi}(a)^* & 0 \\ 0 & \si(a) \si(a)^* \end{bmatrix}.
\end{align*}
By equating the $(1,1)$-entries we get $x = 0$.
On the other hand, by using the covariance in the C*-crossed product we can write
\[
\ol{\pi}(\al_\fh^{-1}(a))^* \ol{\pi}(\al_{\fh}^{-1}(a))
=
\ol{\pi}(\al_\fh^{-1}(a))^* U_{\fh}^* U_{\fh} \ol{\pi}(\al_{\fh}^{-1}(a))
=
U_\fh^* \ol{\pi}(a)^*  \ol{\pi}(a) U_{\fh}.
\]
Then a similar computation gives
\begin{align*}
\begin{bmatrix} \ol{\pi}(\al_\fh^{-1}(a))^* \ol{\pi}(\al_{\fh}^{-1}(a)) + y^*y & \ast \\ \ast & \ast\end{bmatrix}
& =
\rho(\ol{\pi}(a) U_\fh)^*  \rho(\ol{\pi}(a) U_\fh) 
 =
\rho \left( U_\fh^* \ol{\pi}(a)^*  \ol{\pi}(a) U_{\fh} \right) \\
& =
\rho \left( \ol{\pi}(\al_\fh^{-1}(a))^* \ol{\pi}(\al_{\fh}^{-1}(a)) \right) 
 =
\rho\left( \ol{\pi}(\al_\fh^{-1}(a)) \right)^* \rho \left( \ol{\pi}(\al_{\fh}^{-1}(a)) \right) \\
& =
\begin{bmatrix} \ol{\pi}(\al_\fh^{-1}(a))^* \ol{\pi}(\al_\fh^{-1}(a)) & 0 \\ 0 & \si(\al_\fh^{-1}(a))^* \si(\al_\fh^{-1}(a)) \end{bmatrix}.
\end{align*}
By equating the $(1,1)$-entries we get $y = 0$.
Therefore $\rho$ is a trivial dilation, as required.
\end{proof}

\subsection{Hao--Ng}

Let us return to the product system discussion.
Let $P$ be a unital subsemigroup of a discrete group $G$ and let $X$ be a product system over $P$.
Let $\fH$ be a discrete group, and let $\al$ be a \emph{generalised gauge action} of $\fH$ on the Fock C*-algebra $\T_\la(X)$, i.e., every $\la_p(X_p)$ is $\al$-invariant in the sense that
\[
\al_{\fh}( \la_p(X_p) ) = \la_p(X_p) \foral p \in P\text{ and } \fh\in \fH.
\]
Let $\pi$  be a faithful $*$-representation of $\T_\la(X)$ in some $\B(H)$ and consider the faithful $*$-representa\-tion $\ol{\pi} \rtimes U$ on $\ell^2(\fH, H)$ that gives rise to $\T_\la(X) \rtimes_{\al, \la} \fH$.
Since $\al$ restricts to an action on $\T_\la(X)^+$, from \cite[Corollary 3.16]{KR16} we have a canonical completely isometric copy of the crossed product $\T_\la(X)^+ \rtimes_{\al, \la} \fH$ in $\T_\la(X) \rtimes_{\al, \la} \fH$.
We can now consider the family $X \rtimes_{\al, \la} \fH$ of the subspaces
\[
(X \rtimes_{\al, \la} \fH)_p 
:= 
\ol{\spn} \{ \ol{\pi}(\la_p(\xi_p)) U_{\fh} \mid \xi_p \in X_p, \fh \in \fH\}
\text{ for }
p \in P.
\]
It is accustomed to use the short form $X_p \rtimes_{\al, \la} \fH$ instead of $(X \rtimes_{\al, \la} \fH)_p$.
We will show that this construction gives rise to a product system.

\begin{proposition} \label{P:cpps}
Let $P$ be a unital subsemigroup of a discrete group $G$ and let $X$ be a product system over $P$.
Let $\al$ be a generalised gauge action of a discrete group $\fH$ on $\T_\la(X)$.
Then $X \rtimes_{\al, \la} \fH$ is a product system over $P$.
\end{proposition}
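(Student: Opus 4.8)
The plan is to verify the three defining axioms of a concrete product system for the family $\{(X \rtimes_{\al,\la}\fH)_p\}_{p\in P}$ of closed operator subspaces of $\B(\ell^2(\fH,H))$; as $P$ is a subsemigroup of the group $G$ it is left-cancellative, so the uniqueness clause in the third axiom is not an issue. The computational engine is the covariance relation $U_\fh\,\ol{\pi}(x)\,U_\fh^* = \ol{\pi}(\al_\fh(x))$ in the crossed product, combined with the generalised-gauge hypothesis that each $\la_p(X_p)$ is $\al$-invariant. Since $\al_\fh$ is a $*$-automorphism and $\ol{\pi}\circ\la_p$ is an isometric linear bijection of $X_p$ onto $\ol{\pi}(\la_p(X_p))$, for $\xi_p\in X_p$ and $\fh\in\fH$ there is a unique $\xi_p^{\fh}\in X_p$ with $\al_\fh(\la_p(\xi_p)) = \la_p(\xi_p^{\fh})$, so that
\[
U_\fh\,\ol{\pi}(\la_p(\xi_p)) = \ol{\pi}(\la_p(\xi_p^{\fh}))\,U_\fh .
\]
This rule lets us push every $U_\bullet$ to the right of the $\ol{\pi}(\la_\bullet(\cdot))$-factors while keeping the coefficient vectors inside the appropriate $X_\bullet$.

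First, for the zeroth axiom, by definition $(X\rtimes_{\al,\la}\fH)_e = \ol{\spn}\{\ol{\pi}(\la_e(a))U_\fh \mid a\in A,\ \fh\in\fH\}$; since $\la_e$ is a faithful $*$-representation of $A$ and $\al_\fh(\la_e(A)) = \la_e(A)$, this is exactly the copy of the reduced crossed product $\la_e(A)\rtimes_{\al,\la}\fH \simeq A\rtimes_{\al,\la}\fH$ sitting inside $\T_\la(X)\rtimes_{\al,\la}\fH$, hence a C*-algebra. For the second axiom, take $\xi_p\in X_p$, $\xi_q\in X_q$, $\fh,\fh'\in\fH$; using the rule above and that $\la=\la(X)$ is a product system (so $\la_p(\xi_p)\la_q(\xi_q^{\fh}) = \la_{pq}(\xi_p\xi_q^{\fh})$ with $\xi_p\xi_q^{\fh}\in X_{pq}$) we get
\[
\bigl(\ol{\pi}(\la_p(\xi_p))U_\fh\bigr)\bigl(\ol{\pi}(\la_q(\xi_q))U_{\fh'}\bigr)
= \ol{\pi}(\la_p(\xi_p))\,\ol{\pi}(\la_q(\xi_q^{\fh}))\,U_{\fh\fh'}
= \ol{\pi}(\la_{pq}(\xi_p\xi_q^{\fh}))\,U_{\fh\fh'} \in (X\rtimes_{\al,\la}\fH)_{pq},
\]
and passing to closed linear spans gives $(X\rtimes_{\al,\la}\fH)_p\cdot(X\rtimes_{\al,\la}\fH)_q\subseteq (X\rtimes_{\al,\la}\fH)_{pq}$. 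For the third axiom, take $\xi_p\in X_p$, $\xi_{pq}\in X_{pq}$, $\fh,\fh'\in\fH$; using $(\ol{\pi}(\la_p(\xi_p))U_\fh)^* = U_\fh^*\ol{\pi}(\la_p(\xi_p)^*)$, the product-system relation $\la_p(\xi_p)^*\la_{pq}(\xi_{pq}) = \la_q(\xi_p^*\xi_{pq})$, and the rule above applied with $\fh^{-1}$, we obtain
\[
\bigl(\ol{\pi}(\la_p(\xi_p))U_\fh\bigr)^*\bigl(\ol{\pi}(\la_{pq}(\xi_{pq}))U_{\fh'}\bigr)
= U_\fh^*\,\ol{\pi}(\la_q(\xi_p^*\xi_{pq}))\,U_{\fh'}
= \ol{\pi}\bigl(\la_q((\xi_p^*\xi_{pq})^{\fh^{-1}})\bigr)\,U_{\fh^{-1}\fh'} \in (X\rtimes_{\al,\la}\fH)_q,
\]
and again taking closed linear spans yields $(X\rtimes_{\al,\la}\fH)_p^*\cdot(X\rtimes_{\al,\la}\fH)_{pq}\subseteq(X\rtimes_{\al,\la}\fH)_q$.

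These three verifications complete the proof; as recorded right after the definition of a concrete product system, it then follows automatically that each $(X\rtimes_{\al,\la}\fH)_p$ is a C*-correspondence over $(X\rtimes_{\al,\la}\fH)_e = A\rtimes_{\al,\la}\fH$. I do not expect a genuine obstacle here: the only point needing care is the bookkeeping of which fiber the $U_\bullet$-conjugated generators land in, where $\al$-invariance of each $\la_p(X_p)$ is precisely what keeps the coefficients inside $\la_p(X_p)$; in particular no non-degeneracy of the left actions on $X$ is used. One may additionally remark --- though it is not needed for the statement --- that, up to the canonical unitary equivalence of reduced crossed products, the family $X\rtimes_{\al,\la}\fH$ is independent of the choice of the faithful representation $\pi$.
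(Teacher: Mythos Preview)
Your proof is correct and follows essentially the same approach as the paper's own proof: both verify the three axioms of a concrete product system by pushing the unitaries $U_\fh$ past the $\ol{\pi}(\la_p(\cdot))$-factors via the covariance relation and the $\al$-invariance of each $\la_p(X_p)$. The only difference is cosmetic---you work element-wise with the notation $\xi_p^{\fh}$ while the paper phrases the same computation at the level of sets---and your additional remarks (on non-degeneracy and independence of $\pi$) are sound but not needed for the statement.
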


\begin{proof}
For condition (i) by definition we have
\[
X_e \rtimes_{\al, \la} \fH 
:= 
\ol{\spn} \{ \ol{\pi}(\la_e(\xi_e)) U_{\fh} \mid \xi_e \in X_e, \fh \in \fH\} 
\simeq
A \rtimes_{\al, \la} \fH,
\]
which is a C*-algebra.
The other two conditions follow by the fact that we have the equality $\al_{\fh}(\la_p(X_p)) = \la_p(X_p)$ for all $p \in P$ and $\fh\in \fH$, the covariant relations of the crossed product and that $\la$ is a product system representation.
Indeed, for condition (ii) we have
\begin{align*}
\ol{\pi}(\la_p(X_p)) U_{\fh_1} \ol{\pi}(\la_q(X_q)) U_{\fh_2}
& =
\ol{\pi}\left( \la_p(X_p) \al_{\fh_1}(\la_q(X_q)) \right) U_{\fh_1 \fh_2} \\
& =
\ol{\pi} \left( \la_p(X_p) \la_q(X_q) \right) U_{\fh_1 \fh_2}
\subseteq
\ol{\pi} \left( \la_{pq}(X_p X_q) \right) U_{\fh_1 \fh_2}
\subseteq
X_{pq} \rtimes_{\al, \la} \fH.
\end{align*}
By considering finite linear combinations and their norm-limits we obtain
\[
(X_p \rtimes_{\al, \la} \fH) \cdot (X_q \rtimes_{\al, \la} \fH) \subseteq X_{pq} \rtimes_{\al, \la} \fH.
\]
For condition (iii) we have
\begin{align*}
\left(\ol{\pi}(\la_p(X_p)) U_{\fh_1} \right)^* \ol{\pi}(\la_{pq}(X_{pq})) U_{\fh_2}
& =
\ol{\pi} \circ \al_{\fh_1}^{-1} \left( \la_p(X_p)^* \la_{pq}(X_{pq}) \right) U_{\fh_1^{-1} \fh_2} \\
& \subseteq
\ol{\pi} \left( \la_q(X_q) \right) U_{\fh_1^{-1} \fh_2}
\subseteq
X_{q} \rtimes_{\al, \la} \fH.
\end{align*}
By considering finite linear combinations and their norm-limits we obtain
\[
(X_p \rtimes_{\al, \la} \fH)^* \cdot (X_{pq} \rtimes_{\al, \la} \fH) \subseteq X_{q} \rtimes_{\al, \la} \fH,
\]
and the proof is complete.
\end{proof}

In order to facilitate comparisons we will use the superscript $\rtimes$ for the representations of $X\rtimes_{\al,\la}\fH$.
That is, we write $\la^\rtimes$ for the Fock representation of $X \rtimes_{\al,\la} \fH$, and we fix a representation $\hat{t}^\rtimes$ of $X\rtimes_{\al,\la}\fH$ such that
\[
\T(X\rtimes_{\al,\la}\fH) = \ca(\hat{t}^\rtimes).
\]
We will also write $\io^\rtimes$ for the identity representation $X \rtimes_{\al, \la} \fH \hookrightarrow \T_\la(X) \rtimes_{\al, \la} \fH$.

Due to the properties of the generalised gauge action we have a canonical identification of the $\bo{K}_{\bullet}$-cores.
Note that, since every $\la_p(X_p)$ is $\al$-invariant, then so is every C*-subalgebra $\bo{K}_{\Bx, \la_\ast}$ for $\Bx \in \J$.
By construction we have
\[
\bo{K}_{\Bx, \la_\ast} \rtimes_{\al, \la} \fH \subseteq \T_\la(X) \rtimes_{\al, \la} \fH
\foral
\Bx \in \J.
\]

\begin{proposition} \label{P:alg}
Let $P$ be a unital subsemigroup of a discrete group $G$ and let $X$ be a product system over $P$.
Let $\al$ be a generalised gauge action of a discrete group $\fH$ on $\T_\la(X)$ and let $\io^\rtimes \colon X\rtimes_{\al,\la}\fH \to \T_{\la}(X) \rtimes_{\al,\la} \fH$ be the identity representation.
If $\Bx \in \J$, then
\[
\io^\rtimes_\ast(\bo{K}_{\Bx, \hat{t}^\rtimes_\ast})
=
\bo{K}_{\Bx, \io^\rtimes_\ast}
=
\bo{K}_{\Bx, \la_\ast} \rtimes_{\al, \la} \fH.
\]
Consequently, if $E_\fH \colon \T_\la(X) \rtimes_{\al,\la} \fH \to \T_\la(X)$ is the faithful conditional expectation of the reduced crossed product, then
\[
E_{\fH}( \io^\rtimes_\ast(b_{\Bx}) ) \in \bo{K}_{\Bx, \la_\ast}
\foral
b_{\Bx} \in \bo{K}_{\Bx, \hat{t}^\rtimes_\ast}.
\]
\end{proposition}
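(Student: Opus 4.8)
The plan is to prove the two displayed identities in succession and then read off the statement about $E_{\fH}$.

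\emph{Step 1: the identity $\io^\rtimes_\ast(\bo{K}_{\Bx,\hat{t}^\rtimes_\ast})=\bo{K}_{\Bx,\io^\rtimes_\ast}$.} I expect this to be essentially formal. By Proposition \ref{P:cpps} the family $X\rtimes_{\al,\la}\fH$ is a concrete product system sitting inside $\T_\la(X)\rtimes_{\al,\la}\fH$, so the identity embedding $\io^\rtimes$ is a Toeplitz representation and promotes to a $*$-homomorphism $\io^\rtimes_\ast\colon\T(X\rtimes_{\al,\la}\fH)\to\T_\la(X)\rtimes_{\al,\la}\fH$ with $\io^\rtimes_\ast\circ\hat{t}^\rtimes_p=\io^\rtimes_p$. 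Recall from the preliminaries that each $\bo{K}_{\bullet}$-core is a C*-subalgebra, so $\io^\rtimes_\ast(\bo{K}_{\Bx,\hat{t}^\rtimes_\ast})$ is a C*-subalgebra of $\T_\la(X)\rtimes_{\al,\la}\fH$; being a C*-algebra that contains the linear span of the $\io^\rtimes$-words, it equals the closed linear span of those words, which is by definition $\bo{K}_{\Bx,\io^\rtimes_\ast}$.

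\emph{Step 2: the identity $\bo{K}_{\Bx,\io^\rtimes_\ast}=\bo{K}_{\Bx,\la_\ast}\rtimes_{\al,\la}\fH$.} For the inclusion $\subseteq$ I would expand a generating monomial of $\bo{K}_{\Bx,\io^\rtimes_\ast}$ into a finite sum of monomials in which each entry of $X_{p_i}\rtimes_{\al,\la}\fH$ is an elementary tensor $\la_{p_i}(\xi_{p_i})U_{\fg_i}$; then, using the covariance relation $U_{\fh}c=\al_{\fh}(c)U_{\fh}$ together with the $\al$-invariance of every $\la_p(X_p)$, move all group unitaries to the right. Because the exponents and the indices are unchanged and the defining constraints $p_1^{-\eps}q_1\cdots p_n^{-1}q_n^{\eps'}=e$ and $q_n^{-\eps'}p_n\cdots q_1^{-1}p_1^{\eps}P=\Bx$ still hold, the surviving left factor is a $\la$-word lying in $\bo{K}_{\Bx,\la_\ast}$, and what remains on the right is a single $U_{\fg}$ with $\fg\in\fH$; so each monomial lies in $\bo{K}_{\Bx,\la_\ast}\rtimes_{\al,\la}\fH$, and passing to spans and closures gives $\subseteq$. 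For $\supseteq$ I would use that $\bo{K}_{\Bx,\la_\ast}\rtimes_{\al,\la}\fH$ is the closed span of the products $bU_{\fh}$ with $b\in\bo{K}_{\Bx,\la_\ast}$ and $\fh\in\fH$; approximating $b$ by $\la$-words from $\bo{K}_{\Bx,\la_\ast}$, rewriting $\la_{\bullet}(\xi)=\io^\rtimes_{\bullet}(\la_{\bullet}(\xi)U_e)$ for every factor, and absorbing the trailing $U_{\fh}$ either into the last creation operator via $\la_{q_n}(\xi_{q_n})U_{\fh}=\io^\rtimes_{q_n}(\la_{q_n}(\xi_{q_n})U_{\fh})$ when $\eps'=1$, or into the last adjoint via $\la_{p_n}(\xi_{p_n})^*U_{\fh}=\io^\rtimes_{p_n}\big(\al_{\fh^{-1}}(\la_{p_n}(\xi_{p_n}))U_{\fh^{-1}}\big)^{*}$ when $\eps'=0$ (legitimate by $\al$-invariance), exhibits $bU_{\fh}$ as a norm limit of $\io^\rtimes$-words with the prescribed index data, hence as an element of $\bo{K}_{\Bx,\io^\rtimes_\ast}$.

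\emph{Step 3: the conclusion for $E_{\fH}$.} Since $\bo{K}_{\Bx,\la_\ast}$ is an $\al$-invariant C*-subalgebra of $\T_\la(X)$, its reduced crossed product embeds canonically in $\T_\la(X)\rtimes_{\al,\la}\fH$ (as already used before the statement) and $E_{\fH}$ restricts on it to the canonical faithful conditional expectation onto $\bo{K}_{\Bx,\la_\ast}$, both maps agreeing on finitely supported functions by returning the $e$-th Fourier coefficient. Combining with Step 1, for $b_{\Bx}\in\bo{K}_{\Bx,\hat{t}^\rtimes_\ast}$ we get $\io^\rtimes_\ast(b_{\Bx})\in\bo{K}_{\Bx,\la_\ast}\rtimes_{\al,\la}\fH$, and therefore $E_{\fH}(\io^\rtimes_\ast(b_{\Bx}))\in\bo{K}_{\Bx,\la_\ast}$.

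I expect the main obstacle to be the $\subseteq$ direction of Step 2: one must commute all the group unitaries past the creation and annihilation operators while verifying that the intervening $\al$-translations keep every factor inside the \emph{same} correspondence $X_{p_i}$ or $X_{q_i}$ and preserve both the neutrality relation and the constructible-ideal constraint, so that the surviving left factor genuinely lands in $\bo{K}_{\Bx,\la_\ast}$ and not merely in a larger core; Steps 1 and 3 should be routine once this is in place.
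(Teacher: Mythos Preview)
Your proposal is correct and follows essentially the same approach as the paper. The only cosmetic differences are that the paper dispatches the inclusion $\bo{K}_{\Bx,\la_\ast}\rtimes_{\al,\la}\fH\subseteq\bo{K}_{\Bx,\io^\rtimes_\ast}$ with the phrase ``by checking the generators'' (whereas you spell out how to absorb the trailing $U_{\fh}$ into the last $\io^\rtimes$-factor), and that for the $E_{\fH}$ statement the paper computes on a generating monomial directly rather than invoking the restriction of $E_{\fH}$ to the sub-crossed-product; your identification of the $\subseteq$ direction as the substantive step matches the paper exactly.
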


\begin{proof}
We will make the standard identification of $\T_\la(X)$ with $\ol{\pi}(\T_\la(X))$, and of $\T_\la(X) \rtimes_{\al,\la} \fH$ with $\ca(\ol{\pi}, U)$.
In order to make a distinction we write $e_G$ for the unit of $G$ and $e_\fH$ for the unit of $\fH$.
For the first part note that we have $\io^\rtimes_\ast(\bo{K}_{\Bx, \hat{t}^\rtimes_\ast}) = \bo{K}_{\Bx, \io^\rtimes_\ast}$ since $\io^\rtimes$ is a representation of $X \rtimes_{\al, \la} \fH$.
Moreover by checking the generators we obtain
\[
\bo{K}_{\Bx, \la_\ast} \rtimes_{\al, \la} \fH 
\subseteq 
\bo{K}_{\Bx, \io^\rtimes_\ast}.
\]
For the reverse inclusion, let $p_1, q_1, \dots, p_n, q_n\in P$ and $\eps, \eps' \in\{0,1\}$ such that $p_1^{-\eps} q_1 \cdots p_n^{-1} q_n^{\eps'} =e_G$ and $q_n^{-\eps'} p_n \dots q_1^{-1} p_1^{\eps} P = \Bx$, and consider an element $b_\Bx$ in $\bo{K}_{\Bx,\hat{t}^\rtimes_\ast}$ of the form:
\[
(\hat{t}^\rtimes \left(\ol{\pi}\left(\la_{p_1}(\xi_{p_1})\right)U_{\fh_1}\right)^*)^{\eps}
\hat{t}^\rtimes(\ol{\pi}\left(\la_{q_1}(\xi_{q_1})\right)U_{\fh_2}) \cdots 
\hat{t}^\rtimes(\ol{\pi}\left(\la_{p_n}(\xi_{p_n})\right)U_{\fh_{2n-1}})^*
 \hat{t}^\rtimes(\ol{\pi}\left(\la_{q_n}(\xi_{q_n})\right)U_{\fh_{2n}}) ^{\eps'}.
\]
By the covariance relation of $(\ol{\pi}, U)$, and the fact that we have the equality $\al_\fh(\la_p(X_p)) = \la_p(X_p)$ for all $p \in P$ and $\fh \in \fH$, we can write
\begin{align*}
\io^\rtimes_\ast(b_{\Bx})
& =
 (\left(\ol{\pi}\left(\la_{p_1}(\xi_{p_1})\right)U_{\fh_1}\right)^*)^{\eps}
\ol{\pi}\left(\la_{q_1}(\xi_{q_1})\right)U_{\fh_2} \cdots 
(\ol{\pi}\left(\la_{p_n}(\xi_{p_n})\right)U_{\fh_{2n-1}})^*
(\ol{\pi}\left(\la_{q_n}(\xi_{q_n})\right)U_{\fh_{2n}})^{\eps'}\\
&=
 \ol{\pi}\left(\la_{p_1}(\eta_{p_1})^*\right)^{\eps}
\ol{\pi}\left(\la_{q_1}(\eta_{q_1})\right) \cdots
\ol{\pi}\left(\la_{p_n}(\eta_{p_n})^*\right)
\ol{\pi}\left(\la_{q_n}(\eta_{q_n})\right)^{\eps'} U_{\fh} \\
& =
\ol{\pi}\left( \left(\la_{p_1}(\eta_{p_1})^* \right)^{\eps} \la_{q_1}(\eta_{q_1}) \cdots \la_{p_n}(\eta_{p_n})^* \la_{q_n}(\eta_{q_n})^{\eps'} \right) U_{\fh}
\end{align*}
for $\fh := \fh_1^{-\eps} \fh_2 \cdots \fh_{2n-1}^{-1} \fh_{2n}^{\eps'}$, and some appropriate choice of $\eta_{p_i} \in X_{p_i}$ and $\eta_{q_i}\in X_{q_i}$ for $i=1,\dots,n$.
Therefore we have
\[
\io^\rtimes_\ast(b_{\Bx}) \in \ol{\pi}(\bo{K}_{\Bx, \la_\ast}) \cdot U_{\fh} \subseteq \bo{K}_{\Bx, \la_\ast} \rtimes_{\al, \la} \fH.
\]
By considering linear combinations and their norm-limits we have
\[
\bo{K}_{\Bx, \io^\rtimes_\ast} \subseteq \bo{K}_{\bo{x}, \la_\ast} \rtimes_{\al, \la} \fH,
\]
as required.

For the second part, let $p_1, q_1, \dots, p_n, q_n\in P$ and $\eps, \eps' \in \{0,1\}$ such that $p_1^{-\eps} q_1 \cdots p_n^{-1} q_n^{\eps'} = e_G$ and $q_n^{-\eps'} p_n \dots q_1^{-1} p_1^{\eps} P = \Bx$, and consider an element $b_\Bx$ in $\bo{K}_{\Bx,\hat{t}^\rtimes_\ast}$ of the form:
\[
(\hat{t}^\rtimes \left(\ol{\pi}\left(\la_{p_1}(\xi_{p_1})\right)U_{\fh_1}\right)^*)^{\eps}
\hat{t}^\rtimes(\ol{\pi}\left(\la_{q_1}(\xi_{q_1})\right)U_{\fh_2}) \cdots 
\hat{t}^\rtimes(\ol{\pi}\left(\la_{p_n}(\xi_{p_n})\right)U_{\fh_{2n-1}})^*
\hat{t}^\rtimes(\ol{\pi}\left(\la_{q_n}(\xi_{q_n})\right)U_{\fh_{2n}})^{\eps'}.
\]
Due to the crossed product covariance and the definition of the action we can write
\[
\io^\rtimes_\ast(b_{\Bx})
= 
\left( \ol{\pi}\left(\la_{p_1}(\eta_{p_1}) \right)^* \right)^{\eps}
\ol{\pi}\left(\la_{q_1}(\eta_{q_1})\right) 
\cdots 
\ol{\pi}\left(\la_{p_n}(\eta_{p_n})\right)^* 
\ol{\pi}\left(\la_{q_n}(\eta_{q_n})\right)^{\eps'} U_{\fh}
\]
for $\fh := \fh_1^{-\eps} \fh_2 \cdots \fh_{2n-1}^{-1} \fh_{2n}^{\eps'}$, and some $\eta_{p_i} \in X_{p_i}$ and $\eta_{q_i}\in X_{q_i}$ for $i=1,\dots, n$.
By definition we have
\[
E_{\fH}(\io^\rtimes_\ast(b_{\Bx}))
=
\begin{cases}
 \ol{\pi}\left(\left(\la_{p_1}(\eta_{p_1}) ^* \right)^{\eps}
\la_{q_1}(\eta_{q_1})
\cdots 
\la_{p_n}(\eta_{p_n})^* 
\la_{q_n}(\eta_{q_n})^{\eps'}\right)
& \text{if } \fh = e_{\fH}, \\
\hspace{.1cm} 0 & \text{otherwise},
\end{cases}
\]
and thus $E_{\fH}(\io^\rtimes_\ast(b_{\Bx})) \in \ol{\pi}\left(\bo{K}_{\Bx, \la_\ast}\right)$ as required.
The proof is completed by considering finite linear combinations of elements of this form in $\bo{K}_{\Bx, \hat{t}^\rtimes_\ast}$, and their norm-limits.
\end{proof}

The following proposition is the key result of this section.

\begin{proposition} \label{P:id cp}
Let $P$ be a unital subsemigroup of a discrete group $G$ and let $X$ be a product system over $P$.
Let $\al$ be a generalised gauge action of a discrete group $\fH$ on $\T_\la(X)$.
Then the identity representation $\io^\rtimes \colon X\rtimes_{\al,\la}\fH \to \T_{\la}(X) \rtimes_{\al,\la} \fH$ is equivariant, Fock covariant, and injective.
\end{proposition}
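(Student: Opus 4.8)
The plan is to verify the three assertions separately, disposing of injectivity and equivariance quickly and then devoting the main effort to Fock covariance, which I would obtain from Theorem \ref{T:Fock cov} applied to the product system $Y := X \rtimes_{\al,\la} \fH$ of Proposition \ref{P:cpps}.

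\emph{Injectivity} is essentially automatic: $\io^\rtimes$ is the identity inclusion of the concrete product system $Y$ into $\T_\la(X) \rtimes_{\al,\la} \fH$, so $\io^\rtimes_e$ is just the inclusion of the C*-algebra $Y_e = X_e \rtimes_{\al,\la} \fH$ as a C*-subalgebra, and is in particular faithful. For \emph{equivariance} I would use that $\al$ preserves the $G$-grading of $\T_\la(X)$: each $g$-fibre is the closed linear span of products of the subspaces $\la_p(X_p)$ and their adjoints with $p_1^{-\eps}q_1\cdots p_n^{-1}q_n^{\eps'}=g$, and since every $\la_p(X_p)$ is $\al$-invariant, $\al_\fh$ maps each such product to one of the same shape, so $\al_\fh([\T_\la(X)]_g) = [\T_\la(X)]_g$. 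Concretely, the unitary $V$ on $\F X \otimes \ell^2(\fH) \otimes \ell^2(G)$ given by $V(\xi_r \otimes \de_\fh \otimes \de_g) = \xi_r \otimes \de_\fh \otimes \de_{rg}$ satisfies $\ad_V(\ol{\pi}(\la_p(\xi_p)) \otimes I) = \ol{\pi}(\la_p(\xi_p)) \otimes \la_p$ and $\ad_V(U_\fh \otimes I) = U_\fh \otimes I$, so $\T_\la(X) \rtimes_{\al,\la} \fH$ carries a reduced, hence a normal, coaction of $G$ determined by $\la_p(\xi_p) \mapsto \la_p(\xi_p) \otimes u_p$ and $U_\fh \mapsto U_\fh \otimes 1$. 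Restricting this coaction to $\ca(\io^\rtimes)$ (which it carries into $\ca(\io^\rtimes) \otimes \ca_{\max}(G)$, since the generators $\ol{\pi}(\la_p(\xi_p))U_\fh$ go to $\ol{\pi}(\la_p(\xi_p))U_\fh \otimes u_p$) gives the $*$-homomorphism $\de$ witnessing equivariance of $\io^\rtimes$.

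For \emph{Fock covariance}, since $\io^\rtimes$ is now equivariant and injective, it suffices by Theorem \ref{T:Fock cov} to check conditions (i) and (ii) for $Y$, with $\hat{t}^\rtimes$ a representation such that $\T(Y) = \ca(\hat{t}^\rtimes)$. Condition (i) is immediate from Proposition \ref{P:alg} and $(\ref{eq:fc mt})$: $\bo{K}_{\mt,\io^\rtimes_\ast} = \bo{K}_{\mt,\la_\ast} \rtimes_{\al,\la} \fH = (0)$. For condition (ii), fix a $\cap$-closed $\F = \{\Bx_1,\dots,\Bx_n\} \subseteq \J$ with $\bigcup_i \Bx_i \neq \mt$ and $b_{\Bx_i} \in \bo{K}_{\Bx_i,\hat{t}^\rtimes_\ast}$ such that $\sum_{i: r \in \Bx_i} \io^\rtimes_\ast(b_{\Bx_i}) \io^\rtimes_r(Y_r) = (0)$ for all $r \in \bigcup_i \Bx_i$. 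After discarding the empty ideals (whose cores are $(0)$ by condition (i)) one may assume each $\Bx_i \neq \mt$; since $Y_r = \ol{\spn}\{\la_r(\xi_r) U_\fk \mid \xi_r\in X_r,\ \fk\in\fH\}$ and the $U_\fk$ are unitaries, the hypothesis is equivalent to $\sum_{i: r \in \Bx_i} \io^\rtimes_\ast(b_{\Bx_i}) \la_r(\xi_r) = 0$ in $\T_\la(X) \rtimes_{\al,\la} \fH$ for all $r \in \bigcup_i \Bx_i$ and $\xi_r \in X_r$. By Proposition \ref{P:alg}, $\io^\rtimes_\ast(b_{\Bx_i}) \in \bo{K}_{\Bx_i,\la_\ast} \rtimes_{\al,\la} \fH$, and since this sub-crossed-product is $U_\fH$-normalised and $E_\fH$ restricts on it to its canonical (faithful) conditional expectation onto $\bo{K}_{\Bx_i,\la_\ast}$, the Fourier coefficients $d_{i,\fk} := E_\fH(\io^\rtimes_\ast(b_{\Bx_i}) U_\fk^*)$ lie in $\bo{K}_{\Bx_i,\la_\ast}$. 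Applying $E_\fH(\,\cdot\,U_\fk^*)$ to the hypothesis, and using the crossed-product covariance relations together with $\al_\fk(\la_r(X_r)) = \la_r(X_r)$, one obtains $\sum_{i: r \in \Bx_i} d_{i,\fk} \la_r(X_r) = (0)$ for every $r \in \bigcup_i \Bx_i$ and $\fk \in \fH$. Writing $d_{i,\fk} = \la_\ast(b'_{i,\fk})$ with $b'_{i,\fk} \in \bo{K}_{\Bx_i,\hat{t}_\ast}$ (where $\T(X) = \ca(\hat{t})$, using $\la_\ast(\bo{K}_{\Bx_i,\hat{t}_\ast}) = \bo{K}_{\Bx_i,\la_\ast}$), condition (ii) of Theorem \ref{T:Fock cov} for the Fock representation $\la$ of $X$ — established in the first paragraph of its proof — yields $\sum_i d_{i,\fk} = 0$ for every $\fk$. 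Hence $\sum_i \io^\rtimes_\ast(b_{\Bx_i})$ has all Fourier coefficients equal to $0$ and so vanishes by faithfulness of $E_\fH$, which is exactly condition (ii).

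The step I expect to be the main obstacle is condition (ii): its content is to ``integrate out'' the group $\fH$, namely to pass from a $\bo{K}_\bullet$-core inside $\ca(\io^\rtimes)$ to the family of corresponding cores inside $\T_\la(X)$ indexed by $\fH$ (which is precisely what Proposition \ref{P:alg} delivers), to exploit the $\al$-invariance of each $\la_r(X_r)$ to land exactly in the hypothesis of condition (ii) for the plain Fock representation of $X$, and then to transport the conclusion back through the faithful conditional expectation $E_\fH$ of the reduced crossed product.
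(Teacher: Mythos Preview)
Your proof is correct, and for condition~(ii) it takes a genuinely different route from the paper's.

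For equivariance, the paper simply invokes \cite[Lemma~7.16]{Wil07} to get the canonical $*$-isomorphism
\[
(\T_\la(X)\otimes\ca_{\max}(G))\rtimes_{\al\otimes\id,\la}\fH\;\simeq\;(\T_\la(X)\rtimes_{\al,\la}\fH)\otimes\ca_{\max}(G),
\]
whereas you build the implementing unitary $V$ by hand. Both give the same conclusion; your construction is more self-contained but implicitly uses that $\F X$ can be made into a Hilbert space via a faithful representation of $A$, which is harmless.

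For condition~(ii), the paper does \emph{not} extract Fourier coefficients of $c:=\sum_i\io^\rtimes_\ast(b_{\Bx_i})$. Instead it works with the single expectation $E_\fH(c^*c)$: using that $b_{\Bx_i}^*b_{\Bx_j}\in\bo{K}_{\Bx_i\cap\Bx_j,\hat{t}^\rtimes_\ast}$ and the second part of Proposition~\ref{P:alg}, it writes $E_\fH(c^*c)=\ol\pi(\sum_{i,j}\la_\ast(d_{\Bx_i\cap\Bx_j}))$ with $d_{\Bx_i\cap\Bx_j}\in\bo{K}_{\Bx_i\cap\Bx_j,\hat{t}_\ast}$, shows from the hypothesis that $E_\fH(c_r^*c_r)\la_r(X_r)=(0)$ and hence that the corresponding sum over $\{(i,j):r\in\Bx_i\cap\Bx_j\}$ annihilates $\la_r(X_r)$, and then applies condition~(ii) for $\la$ on the $\cap$-closed family $\F$ to conclude $E_\fH(c^*c)=0$. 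Your argument bypasses the product-of-cores structure entirely: you pull out each Fourier coefficient $d_{i,\fk}=E_\fH(\io^\rtimes_\ast(b_{\Bx_i})U_\fk^*)\in\bo{K}_{\Bx_i,\la_\ast}$ directly from Proposition~\ref{P:alg}, pass the hypothesis through $E_\fH(\,\cdot\,U_\fk^*)$ using $\al_\fk(\la_r(X_r))=\la_r(X_r)$, and apply condition~(ii) for $\la$ to each $\fk$ separately. This is shorter and uses only the first part of Proposition~\ref{P:alg}, at the cost of invoking the (standard) fact that an element of a reduced crossed product with all Fourier coefficients zero must vanish. The paper's positivity trick avoids that fact, needing only faithfulness of $E_\fH$ on positives, but requires keeping track of the double-index family $\{\Bx_i\cap\Bx_j\}$.
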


\begin{proof}
We will show that the identity representation satisfies the conditions of Theorem \ref{T:Fock cov}.
By definition it is injective.
Moreover, by \cite[Lemma 7.16]{Wil07} we have
\[
 (\T_\la(X) \otimes \ca_{\max}(G)) \rtimes_{\al\otimes \id, \la} \fH
 \simeq 
 (\T_\la(X) \rtimes_{\al, \la} \fH) \otimes \ca_{\max}(G),
\]
by a canonical $*$-isomorphism, and hence we get that $\io^\rtimes$ is equivariant.
It remains to show that $\io^\rtimes$ satisfies conditions (i) and (ii) of Theorem \ref{T:Fock cov}.
We will make the standard identification of $\T_\la(X)$ with $\ol{\pi}(\T_\la(X))$, and of $\T_\la(X) \rtimes_{\al,\la} \fH$ with $\ca(\ol{\pi}, U)$.

For condition (i), an application of Proposition \ref{P:alg} and using that $\bo{K}_{\mt, \la_\ast} = (0)$ yield
\[
\bo{K}_{\mt, \io^\rtimes_\ast} = \bo{K}_{\mt, \la_\ast} \rtimes_{\al, \la} \fH = (0).
\]

For condition (ii), let $\F = \{\Bx_1, \dots, \Bx_n\}$ be a finite $\cap$-closed subset of $\J$  such that $\bigcup_{i=1}^n \bo{x}_i \neq \mt$ and let $b_{\Bx_i} \in \bo{K}_{\Bx_i, \hat{t}^{\rtimes}_\ast}$ for $i=1, \dots, n$ that satisfy
\[
\sum_{i : r \in \Bx_i} \io^\rtimes_\ast(b_{\Bx_i}) \io^\rtimes_r\left(\ol{\pi}\left(\la_r(X_r\right)U_{\fh}\right) = (0)
\foral r \in \bigcup_{i=1}^n \Bx_i\text{ and }\fh \in \fH.
\]
We will show that $\sum_{i=1}^n \io^\rtimes_\ast(b_{\Bx_i}) = 0$.
For notational convenience, set
\[
c := \sum_{i=1}^n \io^\rtimes_\ast(b_{\Bx_i})
\qand
c_r:=\sum_{i:r\in \bo{x}_i} \io^\rtimes_\ast (b_{\bo{x}_i})
\text{ for each }
r \in \bigcup_{i=1}^n \Bx_i.
\]
Since $b_{\bo{x}_i}^* b_{\bo{x}_j} \in \bo{K}_{\Bx_i \cap \Bx_j, \hat{t}^\rtimes_\ast}$, by Proposition \ref{P:alg} we have
\[
E_{\fH} \left( \io^\rtimes_\ast(b_{\bo{x}_i}^* b_{\bo{x}_j}) \right) 
\in
\ol{\pi}(\bo{K}_{\Bx_i \cap \Bx_j, \la_\ast}),
\]
and in particular we can write 
\[
E_{\fH} \left( \io^\rtimes_\ast(b_{\bo{x}_i}^*b_{\bo{x}_j}) \right) 
= 
\ol{\pi}\left(\la_\ast (d_{\bo{x}_i \cap\bo{x}_j})\right),
\text{ for some }
d_{\bo{x}_i\cap\bo{x}_j}\in \bo{K}_{\bo{x}_i \cap\bo{x}_j, \hat{t}_\ast}.
\] 
Therefore we have
\[
E_{\fH}(c^*c)
=
\sum_{i,j=1}^n E_{\fH}(\io^\rtimes_\ast(b_{\bo{x}_i}^* b_{\bo{x}_j})) 
=
\ol{\pi}\left(\sum_{i,j=1}^n \la_\ast (d_{\bo{x}_i \cap\bo{x}_j})\right)
\in 
\ol{\pi}\left(\bo{B}_{\F,  \la_\ast}\right),
\]
where we used that $\{\bo{x}_i\cap\bo{x}_j : i,j=1,\dots,n\}=\F$ for the inclusion, since $\F$ is $\cap$-closed.
Now let
\[
r\in \bigcup_{i,j=1}^n (\bo{x}_i\cap\bo{x}_j)
=
\bigcup_{i=1}^n \bo{x}_i.
\]
By using that $\F$ is $\cap$-closed we obtain
\[
\{(i,j)\in [n]^2 \mid r\in\bo{x}_i\cap\bo{x}_j\}
=
\{i\in [n] \mid r\in \bo{x}_i\} \times \{j \in [n] \mid r\in \bo{x}_j\},
\]
and hence
\begin{align*}
E_{\fH}(c_r^* c_r)
& =
\sum_{i: r \in \bo{x}_i} \sum_{j: r \in \bo{x}_j} E_{\fH} \left( \io^\rtimes_\ast(b_{\bo{x}_i}^* b_{\bo{x}_j}) \right) 
=
\sum_{i,j:r\in \bo{x}_i\cap\bo{x}_j} E_{\fH} \left( \io^\rtimes_\ast(b_{\bo{x}_i}^* b_{\bo{x}_j}) \right) 
=
\sum_{i,j:r\in \bo{x}_i\cap\bo{x}_j} \ol{\pi}\left( \la_\ast(d_{\bo{x}_i\cap\bo{x}_j})\right).
\end{align*}
Since $r \in \bigcup_{i=1}^n \bo{x}_i$, by using the assumption we have $c_r\ol{\pi}(\la_r(\xi_r))=0$ for all $\xi_r \in X_r$.
Thus we get
\begin{align*}
\ol{\pi}(\la_r(\xi_r))^*E_{\fH}(c_r^*c_r) \ol{\pi}(\la_r(\xi_r))
=
E_{\fH}\left( \ol{\pi}(\la_r(\xi_r))^*c_r^*c_r\ol{\pi}(\la_r(\xi_r)) \right)
=
0,
\end{align*}
where we used that $\T_\la(X)$ is in the multiplicative domain of $E_\fH$.
Consequently, we obtain
\[
E_{\fH}(c_r^*c_r)^{1/2} \ol{\pi}(\la_r(\xi_r))=0
\]
and in particular
\[
\ol{\pi}\left(\sum_{i,j: r\in\bo{x}_i\cap\bo{x}_j} \la_\ast(d_{\bo{x}_i\cap\bo{x}_j})\la_r(\xi_r)\right)
=
\sum_{i,j: r\in\bo{x}_i\cap\bo{x}_j}\ol{\pi}\left( \la_\ast(d_{\bo{x}_i\cap\bo{x}_j})\right)\ol{\pi}(\la_r(\xi_r))
=
E_{\fH}(c_r^*c_r) \ol{\pi}(\la_r(\xi_r))=0.
\]
Since $\ol{\pi}$ is faithful we deduce that
\[
\sum_{i,j: r\in\bo{x}_i\cap\bo{x}_j} \la_\ast(d_{\bo{x}_i\cap\bo{x}_j}) \la_r(X_r)
=
(0)
\foral
r\in \bigcup_{i,j=1}^n (\bo{x}_i\cap\bo{x}_j).
\]
Since $\la$ satisfies condition (ii) of Theorem \ref{T:Fock cov} we get $\sum_{i,j=1}^n \la_\ast(d_{\bo{x}_i\cap\bo{x}_j}) = 0$, and therefore
\[
E_{\fH}(c^*c)
=
\ol{\pi} \left( \sum_{i,j=1}^n \la_\ast(d_{\bo{x}_i\cap\bo{x}_j}) \right) 
=
0.
\]
Faithfulness of $E_{\fH}$ then implies that $c=0$, as required.
\end{proof}

Since $A \times_{X, \la} P$ is the C*-envelope of $\T_\la(X)^+$ it inherits a group action $\dot{\al}$ from $\fH$.
The reduced Hao--Ng isomorphism problem asks if there exists a canonical $*$-isomorphism such that
\[
(A \rtimes_{\al, \la} \fH) \times_{X \rtimes_{\al, \la} \fH,\la} P
\stackrel{?}{\simeq}
(A \times_{X, \la} P) \rtimes_{\dot \al, \la} \fH.
\]
By \cite[Theorem 5.1]{Seh21} this is equivalent to asking for a canonical $*$-isomorphism
\begin{equation} \label{eq:cenv}
\cenv(\T_\la(X \rtimes_{\al, \la} \fH)^+) \stackrel{?}{\simeq} \cenv(\T_\la(X)^+) \rtimes_{\dot\al, \la} \fH,
\end{equation}
which by Proposition \ref{P:dis max}  is equivalent to the existence of a canonical completely isometric isomorphism
\begin{equation} \label{eq:tensor}
\T_\la(X \rtimes_{\al, \la} \fH)^+ \simeq \T_\la(X)^+ \rtimes_{\al, \la} \fH.
\end{equation}
A careful investigation of the arguments in \cite{DKKLL20, DK20, Kat17, Kat20, KR21} suggests that it is enough to show that the embedding $X \rtimes_{\al, \la} \fH \hookrightarrow \T_\la(X) \rtimes_{\al, \la} \fH$ is Fock covariant, without passing through a $*$-isomorphism of their ambient C*-algebras.

\begin{theorem} \label{T:hao-ng}
Let $P$ be a unital subsemigroup of a discrete group $G$ and let $X$ be a product system over $P$.
Let $\al$ be a generalised gauge action of a discrete group $\fH$ on $\T_\la(X)$.
Then the identity representation $\io^\rtimes \colon X\rtimes_{\al,\la}\fH \to \T_{\la}(X) \rtimes_{\al,\la} \fH$ lifts to a completely isometric isomorphism
\[
\T_\la(X \rtimes_{\al, \la} \fH)^+ \simeq \T_\la(X)^+ \rtimes_{\al, \la} \fH.
\]
Consequently, the reduced Hao--Ng isomorphism problem has an affirmative answer, i.e.,
\[
(A \rtimes_{\al, \la} \fH) \times_{X \rtimes_{\al, \la} \fH,\la} P
\simeq
(A \times_{X, \la} P) \rtimes_{\dot \al, \la} \fH,
\]
by a canonical $*$-isomorphism, where $\dot{\al}$ is the induced action of $\fH$ on $A \times_{X,\la} P$.
\end{theorem}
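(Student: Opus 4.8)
The plan is to deduce the theorem from Corollary \ref{C:cis} applied to the product system $X \rtimes_{\al,\la}\fH$ over $P$ (which is a genuine product system with coefficient algebra $A\rtimes_{\al,\la}\fH$ by Proposition \ref{P:cpps}) and the identity representation $\io^\rtimes$. Proposition \ref{P:id cp} already supplies three of the four hypotheses: $\io^\rtimes$ is equivariant, Fock covariant and injective. The remaining point is that the coaction by $G$ on $\ca(\io^\rtimes)$ is \emph{normal}. For this I would argue that, because $\al_\fh$ fixes every $\la_p(X_p)$ setwise, it commutes with the reduced coaction of $G$ on $\T_\la(X)$; in fact the unitary $U$ on $\F X \otimes \ell^2(G)$ with $U(\xi_r \otimes \de_g) = \xi_r \otimes \de_{rg}$ conjugates $\al_\fh$ to $\al_\fh \otimes \id$, so the induced $G$-coaction on $\T_\la(X)\rtimes_{\al,\la}\fH$ is again spatially implemented by a copy of $U$ on $\ell^2(\fH,\F X)\otimes\ell^2(G)$. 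Hence it is a reduced coaction, therefore normal, and its restriction to the invariant C*-subalgebra $\ca(\io^\rtimes)$ remains normal.

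Granting this, Corollary \ref{C:cis} gives that $\la^\rtimes_p(\xi) \mapsto \io^\rtimes_p(\xi)$ extends to a completely isometric isomorphism
\[
\T_\la(X \rtimes_{\al,\la}\fH)^+ \;\stackrel{\simeq}{\longrightarrow}\; \ol{\alg}\{\io^\rtimes_p((X\rtimes_{\al,\la}\fH)_p) \mid p \in P\} \;\subseteq\; \T_\la(X) \rtimes_{\al,\la}\fH.
\]
I would then identify the target with the canonical completely isometric copy of $\T_\la(X)^+ \rtimes_{\al,\la}\fH$ inside $\T_\la(X)\rtimes_{\al,\la}\fH$ afforded by \cite[Corollary 3.16]{KR16}: both are the norm-closed subalgebra generated by $\{\ol{\pi}(\la_p(\xi_p))U_\fh \mid p\in P,\ \xi_p\in X_p,\ \fh\in\fH\}$, using that $\T_\la(X)^+ = \ol{\alg}\{\la_p(X_p) \mid p\in P\}$ and that $\al_\fh(\la_p(X_p)) = \la_p(X_p)$ forces the closed span of the $\T_\la(X)^+$-valued functions $\ol{\pi}(b)U_\fh$ to coincide with this algebra. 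This establishes the first assertion, the isomorphism being the lift of $\io^\rtimes$.

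For the Hao--Ng consequence I would pass to C*-envelopes. The displayed completely isometric isomorphism gives $\cenv(\T_\la(X\rtimes_{\al,\la}\fH)^+) \simeq \cenv(\T_\la(X)^+ \rtimes_{\al,\la}\fH)$, and by Proposition \ref{P:dis max} together with \cite[Theorem 5.1]{Seh21} the right-hand side is
\[
\cenv(\T_\la(X)^+) \rtimes_{\dot{\al},\la}\fH \;\simeq\; (A \times_{X,\la} P) \rtimes_{\dot{\al},\la}\fH .
\]
Applying \cite[Theorem 5.1]{Seh21} instead to the product system $X\rtimes_{\al,\la}\fH$ with coefficients $A\rtimes_{\al,\la}\fH$ identifies the left-hand side with $(A\rtimes_{\al,\la}\fH) \times_{X\rtimes_{\al,\la}\fH,\la} P$, and since each step is canonical on generators, composing yields the desired $*$-isomorphism.

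I expect the main obstacle to be not a hard computation but making two formal-looking steps airtight: first, the normality claim, i.e.\ verifying carefully that the $G$-coaction on $\T_\la(X)$ survives the reduced crossed product by $\fH$ as a normal coaction (this is the single place where a small non-formal statement about reduced crossed products interacting with coactions/Fell bundles is invoked, and it is exactly what lets Corollary \ref{C:cis} apply); and second, the precise identification of $\ol{\alg}\{\io^\rtimes_p((X\rtimes_{\al,\la}\fH)_p)\}$ with the embedded copy of $\T_\la(X)^+\rtimes_{\al,\la}\fH$, where one must be attentive to the definition of the crossed product of a nonselfadjoint operator algebra.
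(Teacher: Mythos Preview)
Your proposal is correct and follows essentially the same route as the paper: invoke Proposition \ref{P:id cp} for equivariance, Fock covariance and injectivity of $\io^\rtimes$, establish normality of the $G$-coaction on $\ca(\io^\rtimes)$, apply Corollary \ref{C:cis} to obtain the completely isometric isomorphism of tensor algebras, and then combine Proposition \ref{P:dis max} with \cite[Theorem 5.1]{Seh21} for the Hao--Ng consequence. The only substantive difference is in the normality step: where you argue by spatially implementing the coaction via the unitary $U$ (after checking $\al_\fh$ commutes with it), the paper instead appeals directly to \cite[Lemma 7.16]{Wil07} to obtain a canonical $*$-isomorphism $(\T_\la(X)\otimes\ca_\la(G))\rtimes_{\al\otimes\id,\la}\fH \simeq (\T_\la(X)\rtimes_{\al,\la}\fH)\otimes\ca_\la(G)$, from which normality is immediate; this is a cleaner way to handle precisely the point you flagged as the main obstacle.
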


\begin{proof}
By Proposition \ref{P:id cp} the identity representation is an equivariant Fock covariant injective representation of $X\rtimes_{\al,\la}\fH$.
Moreover, by \cite[Lemma 7.16]{Wil07} we have
\[
 (\T_\la(X) \otimes \ca_{\la}(G)) \rtimes_{\al\otimes \id, \la} \fH
 \simeq 
 (\T_\la(X) \rtimes_{\al, \la} \fH) \otimes \ca_{\la}(G),
\]
by a canonical $*$-isomorphism, and hence we get that the coaction that $\io^\rtimes$ admits is normal.
Since
\[
\T_\la(X)^+ \rtimes_{\al, \la} \fH
=
\ol{\alg}\{\io^\rtimes_p(X_p \rtimes_{\al, \la} \fH) \mid p \in P\},
\]
the required completely isometric isomorphism is induced by Corollary \ref{C:cis}.
By Proposition \ref{P:dis max} we then have
\[
\cenv(\T_\la(X \rtimes_{\al, \la} \fH)^+) \simeq \cenv(\T_\la(X)^+) \rtimes_{\dot{\al}, \la} \fH,
\]
and \cite[Theorem 5.1]{Seh21} finishes the proof.
\end{proof}


\end{document}